\documentclass[a4paper,12pt,oneside,reqno]{amsart}
\usepackage{hyperref}
\usepackage[headinclude,DIV13]{typearea}
\areaset{15.1cm}{25.0cm}
\parskip 0pt plus .5pt
\usepackage{amsfonts,amssymb,amsmath,amsthm,bbm}
\usepackage{cancel} 
\usepackage[latin1] {inputenc}
\usepackage{graphicx, psfrag}



\newtheorem{theorem}{Theorem}[section]
\newtheorem{lemma}[theorem]{Lemma}
\newtheorem{proposition}[theorem]{Proposition}

\theoremstyle{definition}

\theoremstyle{remark}
\newtheorem*{remark}{Remark}

\def\paragraph#1{\noindent \textbf{#1}}

\numberwithin{equation}{section}

\def\dist{\mathop{\rm dist}\nolimits}

\def\rf{\rfloor}

\def\lf{\lfloor}

\def\<{\langle}
\def\>{\rangle}
\def\a{\alpha}
\def\b{\beta}
\def\e{\epsilon}

\def\g{\gamma}

\def\l{\lambda}

\def\s{\sigma}
\def\t{\tau}
\def\th{\theta}

\def\o{\omega}

\def\O{\Omega}
\def\S{\Sigma}

\def\R{{\Bbb R}}  
\def\N{{\Bbb N}}  
\def\P{{\Bbb P}}  
\def\Q{{\Bbb Q}}  
\def\E{{\Bbb E}}  

\let\cal=\mathcal

\def\BB{{\cal B}}
\def\CC{{\cal C}}

\def\EE{{\cal E}}
\def\FF{{\cal F}}

\def\LL{{\cal L}}

\def\PP{{\cal P}}

\def\VV{{\cal V}}

\def\VV{{\cal V}}

 \def \b {{\beta}}
 \def \e {{\varepsilon}}
 \def \s {{\sigma}}

 \def \t {{\tau}}
 \def \th {{\theta}}
 
 \def \g {{\gamma}}
 \def \l {{\lambda}}
 
 \def \a {{\alpha}}
 \def \o {{\omega}}
 \def \O {{\Omega}}
 \def \th {{\theta}}

 \def \ba {\begin{array}}
 \def \ea {\end{array}}


 \newcommand{\be}{\begin{equation}}
 \newcommand{\ee}{\end{equation}}

\newcommand{\bea}{\begin{eqnarray}}
 \newcommand{\eea}{\end{eqnarray}}
\def\TH(#1){\label{#1}}\def\thv(#1){\ref{#1}}
\def\Eq(#1){\label{#1}}\def\eqv(#1){(\ref{#1})}

 \def \1{\mathbbm{1}}
\def\wt {\widetilde}
\def\wh{\widehat}

%



\newcommand{\limj}{\stackrel{J_1}{\Longrightarrow}} 
\newcommand{\limm}{\stackrel{M_1}{\Longrightarrow}} 


 \begin{document}

 \title[Convergence to extremal processes]
{Convergence to extremal processes
 in random environments and extremal ageing in SK models}
\author[A. Bovier]{Anton Bovier}
 \address{A. Bovier\\Institut f\"ur Angewandte Mathematik\\
Rheinische Friedrich-Wilhelms-Universit\"at\\ Endenicher Allee 60\\ 53115 Bonn, Germany}
\email{bovier@uni-bonn.de}
\author[V. Gayrard]{V\'eronique Gayrard}
 \address{V. Gayrard\\CMI, LAPT, Universit\'e de Provence\\
39, rue F. Joliot Curie\\13453 Marseille cedex 13, France}
\email{veronique@gayrard.net}
\author[A. \v Svejda]{Ad\'ela \v Svejda}
 \address{A. \v Svejda\\Institut f\"ur Angewandte Mathematik\\
Rheinische Friedrich-Wilhelms-Universit\"at\\ Endenicher Allee 60\\ 53115 Bonn, Germany}
\email{asvejda@uni-bonn.de}

\subjclass[2000]{82C44,60K35,60G70} \keywords{ageing, spin glasses,
random environments, clock process, L\'evy processes, extremal processes
}
\date{\today}

 \begin{abstract}
This paper extends recent results on ageing in mean field spin glasses on 
short time scales, obtained by Ben Arous and G\"un \cite{BAGun11}
in law with respect to the environment, to results that hold almost surely, 
respectively in probability, with respect to the environment. 
It is based on the methods put
forward in \cite{G10a,G10b} and naturally complements \cite{BG10}.
 \end{abstract}

\thanks{ A.B. is partially supported through the 
German Research Foundation in  the SFB 611 and
the Hausdorff Center for Mathematics.
V.G. thanks the IAM, Bonn University, the Hausdorff Center, and the SFB 611 for kind hospitality. A.S. is supported by the German 
Research Foundation in the Bonn International Graduate School in Mathematics.}
 \maketitle

 \section{Introduction and main results}
 \label{S1}

Spin glasses have, for the last decades, presented some of the most interesting
challenges to probability theory. Even mean-field models have prompted a 
1000 page monograph \cite{Tala-new1,Tala-new2} by one of the most eminent 
probabilists of our time. Despite  these efforts and remarkable and 
unexpected progress, a full understanding of the
equilibrium problem, i.e. a full description of the asymptotic geometry of the
Gibbs measures, is still outstanding. In this situation it is somewhat 
surprising that certain properties of their dynamics have been prone to 
rigorous analysis, at least for some limited choices of the dynamics. The 
reason for this is that interesting aspects of the dynamics occur on 
time-scales that are far shorter than  those of equilibration, and 
experiments made with spin glasses usually test the behaviour of the probe 
on such time scales. Indeed, equilibration is expected to take so long as
to become inaccessible to real experiments. The physically interesting issue is 
thus that of \emph{ageing} \cite{Bou92,BD95}, 
a property of time-time correlation  functions that
characterizes the slow decay to equilibrium characteristic for  these systems. 

The mathematical analysis has revealed an universal mechanism behind this 
phenomenon: the convergence of the \emph{clock-process}, that relates the
physical time to the number of ``moves'' of the process, to an $\a$-stable
subordinator (increasing L\'evy process) under proper rescaling. The parameter 
$\a$ can be thought of as an \emph{effective temperature}, that depends both
on the \emph{physical temperature} and the \emph{time scale} considered. 
This has been proven for $p$-spin Sherrington-Kirkpatrick (SK) models
for time scales of the order $\exp(\b\g n)$ (where $n$ is the number of sites
 in
the system) with $0<\g< \min\bigl(\beta,\zeta (p)\bigr)$, where $\zeta(p)$ is
 an increasing 
function of $p$ such that $\zeta(3)>0$ and $\lim_{p\uparrow\infty}\zeta(p)=
2\ln 2$.  Such a result was obtained first in \cite{BBC08} \emph{in law}
with respect to the random environment, and was later extended in \cite{BG10}
to almost sure (resp. in probability, for $p=3,4$) results. The progress 
in the latter paper was possible to a fresh view on the convergence of 
clock processes, introduced and illustrated in two papers \cite{G10a,G10b}.
They view the clock process  as a sum of \emph{dependent} random variables with 
a \emph{random distribution}, and then employ convenient  convergence 
criteria, obtained by Durrett and Resnick \cite{DR78} a long time ago, 
to prove convergence. This is explained in more detail below.

The conditions on the admissible time scales in these results have two 
reasons.  First, it emerges that $\a=\g/\b$, so one of the conditions is
simply that $\a\in(0,1)$. The upper bound $\g<\zeta(p)$ ensures that there 
will be no strong  long-distance correlations, meaning that the systems has 
not had time to discover the full correlation structure of the random 
environment. This condition is thus the stricter the smaller $p$ is, since correlations become weaker as $p$ increases. 

A natural questions to ask is what happens on time-scales that are 
sub-exponential in the volume $n$? This question was first addressed in 
a recent paper by Ben Arous and G\"un \cite{BAGun11}. This situation would correspond formally  to $\a=0$, but $0$-stable subordinators do not exist, so 
some new phenomenon has to appear. Indeed, Ben Arous and G\"un showed that the limiting 
objects appearing here are the so-called \emph{extremal processes}.
In the theory of sums of heavy tailed random variables this idea goes back to 
Kasahara \cite{Kas86} who showed  that by applying non-linear transformations
to the sums of $\a_n$-stable r.v.'s with $\a_n\downarrow 0$, 
extremal processes arise as limit processes. This program was implemented for
clock processes by Ben Arous and G\"un using the approach of \cite{BBC08}
to handle the problems of dependence of the random variables involved. 
As a consequence, their results are again in law with respect to the random 
environment. An interesting aspect of this work was that, due to the very short
time scales considered,  the case $p=2$, i.e. the original SK model, is 
also covered, whereas this is not the case for exponential times scales.

In the present paper we show that by proceeding along the line of \cite{BG10},
one can extend the results of Ben Arous and G\"un to \emph{quenched} results, 
holding for given random environments almost surely (if $p>4$) resp. in 
probability (if $2\leq p\leq 4$).
In fact,  the result we present for the $SK$ models is an application  
of an abstract result we establish, and that can be applied
presumably to all models where ageing was analysed, on the approriate 
time scales. 

Before stating our results, we begin by a concise description of the class 
of models we consider.

\subsection{Markov jump processes in random environments}\label{S11a}
Let us describe the general setting of \emph{Markov jump processes} in random environments that we consider
here. Let $G_n(\VV_n, \LL_n)$ be a sequence of loop-free  graphs
with set of vertices $\VV_n$ and set of edges $\LL_n$. The \emph{random environment} is a family of positive random variables, 
$\t_n(x), x\in \VV_n$, defined on a common probability space $(\O,\FF, \P)$. Note that 
in the most interesting situations the $\t_n$'s are correlated random variables.

On $\VV_n$ we consider a discrete time Markov chain $J_n$ with initial distribution $\mu_n$, transition probabilities $p_n(x,y)$, and transition graph $G_n(\VV_n, \LL_n)$. The law of $J_n$ is a priori random on the probability space of the environment.
We assume that $J_n$ is reversible and admits a unique invariant measure $\pi_n$.

The process we are interested in, $X_n$, is defined as a time change of $J_n$. To this end we set
\be\label{1.0}
\lambda_n(x) \equiv C \pi_n(x)/\tau_n(x) ,  
\ee
where $C>0$ is a model dependent constant, and define the clock process
\be\label{1.1}
\wt S_n(k)=\sum_{i=0}^{k-1}\lambda^{-1}_n(J_n(i)) e_{n,i}, \quad k\in \N\ ,
\ee
where $\left\{e_{n,i} :\ i \in \N_0 , n \in \N\right\}$ is an i.i.d. array of mean $1$ exponential random variables, independent of $J_n$ and the random environment. 
The continuous time process $X_n$ is then given by
\be\label{1.2}
X_n(t)=J_n (k) ,\quad \hbox{\rm if}\,\, \wt S_n(k)\leq t<\wt S_n(k+1)\quad
\hbox{\rm for some}\,\, k \in \N,\ t > 0 \ .
\ee
One verifies readily that $X_n$ is a continuous time Markov jump process with infinitesimal generator 
\be\label{1.2a}
\lambda_n(x,y)\equiv \lambda_n(x)p_n(x,y),
\ee
and invariant measure that assigns to $x\in\VV_n$ the mass $\tau_n(x)$.

To fix notation we denote by $\FF^J$ and $\FF^X$ the $\s$-algebras generated by the
variables $J_n$ and $X_n$, respectively.
We  write $P_{\pi_n}$ for the law of the process $J_n$, conditional on $\FF$,
i.e. for fixed realizations of the random environment.
Likewise we  call $\PP_{\mu_n}$ the law of $X_n$ conditional on $\FF$.

In \cite{G10a, G10b}  and \cite{BG10}, the main aim was to find criteria when there are constants, $a_n,c_n$, satisfying $a_n, c_n \uparrow \infty$, as 
$n\rightarrow \infty$, and such that the process 
\be
S_n(t)\equiv c_n^{-1}\wt S_n(\lfloor a_nt\rfloor)=c_n^{-1}\sum_{i=0}^{\lfloor a_n t \rfloor-1}
\lambda^{-1}_n (J_n(i))e_{n,i} ,\quad t>0,
\Eq(1.3)
\ee
converges in a suitable sense to a stable subordinator. The constants $c_n$ are the time scale on which we observe the
continuous time Markov process $X_n$, while $a_n$ is the number
of steps the jump chain $J_n$ makes during that time.
In order to get convergence to an $\alpha$-stable subordinator, for $\alpha\in(0,1)$, one typically requires that the $\lambda^{-1}$'s observed
on the time scales $c_n$  have a regularly varying tail distribution with index $-\alpha$.
In this paper we
ask when there are constants, $a_n,c_n, \alpha_n$, satisfying $a_n, c_n \uparrow \infty$ and $\alpha_n\downarrow 0$ respectively, as 
$n\rightarrow \infty$, and such that the process $\left(S_n\right)^{\alpha_n}$ converges in a suitable sense to an  extremal process. 




\subsection{ Main Theorems} \label{S11}
We now state three theorems, beginning with an abstract one
that we next specialize to the setting of Section \ref{S11a}. 
Specifically, consider  a triangular array of positive random variables, $Z_{n,i}$, 
defined on a probability space $(\O,\FF,\PP)$. Let $\alpha_n$ and $a_n$ be sequences such that $\alpha_n \downarrow 0$ and $a_n\uparrow \infty$ as $n \rightarrow \infty$, respectively. Our first theorem gives conditions that ensure that the sequence of processes $\left(S_n\right)^{\alpha_n}$, where $S_n(0)=0$ and 
\be\label{1.3a}
S_n(t)\equiv \sum_{i=1}^{\lfloor a_nt\rfloor} Z_{n,i}, \quad t>0,
\ee 
converges to an extremal process. Recall that an extremal process, $M$, 
is a continuous time process whose finite-dimensional distributions
are given as follows: for any $k\in \N$,  $t_1,\ldots,t_k>0$, and 
 $x_1\leq \ldots\leq x_k\in \R$,
\be
P\left(M(t_1)\leq x_1,\ldots, M(t_k)\leq x_k\right)=F^{t_1}\left(x_1\right) F^{t_2-t_1}\left(x_2\right)\cdots F^{t_k-t_{k-1}}\left(x_k\right),\label{defextr}
\ee
where $F$ is a distribution function on $\R$. 
\begin{theorem}\TH(general)
Let $\nu$ be a sigma-finite measure
on $(\R_+,\BB(\R_+))$ such that $\nu(0,\infty)=\infty$. Assume that there exist sequences $a_n,\alpha_n$ such that 
for all continuity points $x$ of the distribution function of
$\nu$, for all  $t>0$, in $\PP$-probability,
\be\Eq(1.4)
\lim_{n\rightarrow \infty}\sum_{i=1}^{ \lf a_nt\rf}\PP\left(Z_{n,i}^{\alpha_n}>x|\FF_{n,i-1}\right) =t\nu(x,\infty),
\ee
and
\be\Eq(1.5)
\lim_{n\rightarrow\infty}\sum_{i=1}^{\lf a_nt\rf}\left[\PP\left(Z_{n,i}^{\alpha_n}>x|\FF_{n,i-1}\right)\right]^2 =0,
\ee
where $\FF_{n,i}$ denotes the $\s$-algebra generated by the random
variables $Z_{n,j}, j\leq i$.
If, moreover, for all $t>0$
\be\Eq(1.6)
\limsup_{n\rightarrow \infty}
\Biggl(\sum_{i=1}^{\lf a_n t\rf} \EE \1_{Z_{n,i} \leq \delta^{1/ \alpha_n} } \delta^{-1/\alpha_n} Z_{n,i}\Biggr)^{\alpha_n}<\infty,  \quad \forall \ \delta >0 \ , 
\ee
then, as $n \rightarrow \infty$,
\be\Eq(1.7)
\textstyle{\left(S_n\right)^{\alpha_n}\limj M_\nu,}
\ee
where $M_\nu$ is an extremal process with one-dimensional distribution function $F(x)=\exp(-\nu(x,\infty))$. Convergence holds weakly on the space $D([0,\infty))$ equipped with the Skorokhod $J_1$-topology. 
\end{theorem}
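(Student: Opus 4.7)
The plan is to reduce the theorem to a convergence statement for the maxima process via the transformation $x\mapsto x^{\alpha_n}$: since $\alpha_n\downarrow 0$, the quantity $(S_n(t))^{\alpha_n}$ should be asymptotic to $\widehat M_n(t):=\max_{1\leq i\leq \lf a_n t\rf}Z_{n,i}^{\alpha_n}$, so the limiting extremal process arises as the max of the triangular array $\{Z_{n,i}^{\alpha_n}\}$.

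\emph{Step 1 (max process converges).} I would first prove $\widehat M_n\Rightarrow M_\nu$ in $D([0,\infty))$ with the $J_1$-topology. Conditions \eqv(1.4) and \eqv(1.5) are precisely the Durrett--Resnick type criteria forcing the random compensator of the point process
\[
\sum_{i\leq \lf a_n t\rf}\delta_{(i/a_n,\,Z_{n,i}^{\alpha_n})}
\]
to converge, on any rectangle $[0,T]\times(\delta,\infty)$, to the deterministic intensity $dt\otimes\nu|_{(\delta,\infty)}$. The classical transfer from compensator convergence to convergence of the point processes themselves \cite{DR78} then gives convergence in distribution to a Poisson point process of intensity $dt\otimes\nu$, and since the running-max functional is continuous at a.s.-simple Poisson configurations, a diagonal argument in $\delta\to 0$ yields the desired $J_1$-convergence $\widehat M_n\Rightarrow M_\nu$.

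\emph{Step 2 (sum and max are asymptotically equal).} Fix $\delta>0$ and split
\[
S_n(t)=S_n^{>}(t,\delta)+S_n^{\leq}(t,\delta),\qquad S_n^{>}(t,\delta):=\sum_{i\leq \lf a_n t\rf}Z_{n,i}\,\1_{Z_{n,i}>\delta^{1/\alpha_n}}.
\]
Using subadditivity of $x\mapsto x^{\alpha_n}$ on $[0,\infty)$ for $\alpha_n\in(0,1)$,
\[
\widehat M_n(t)\leq (S_n(t))^{\alpha_n}\leq (S_n^{>}(t,\delta))^{\alpha_n}+(S_n^{\leq}(t,\delta))^{\alpha_n},
\]
the lower bound coming from $S_n(t)\ge\max_i Z_{n,i}$. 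Writing $N_n^{(\delta)}(t):=\#\{i\leq \lf a_n t\rf:Z_{n,i}^{\alpha_n}>\delta\}$, on the event $\{\widehat M_n(t)>\delta\}$ (whose probability tends to $1$ as $\delta\to 0$ by Step~1) one has $(S_n^{>})^{\alpha_n}\leq (N_n^{(\delta)})^{\alpha_n}\widehat M_n(t)$. Since $N_n^{(\delta)}(t)$ is tight by Step 1 and $K^{\alpha_n}\to 1$ for every fixed $K>0$, the factor $(N_n^{(\delta)})^{\alpha_n}\to 1$ in $\PP$-probability. Markov's inequality applied to \eqv(1.6), combined once more with $K^{\alpha_n}\to 1$, shows that for any $\eta>0$ one may choose $\delta>0$ so that $\limsup_n\PP((S_n^{\leq}(t,\delta))^{\alpha_n}>\eta)<\eta$. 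These two bounds pinch $(S_n(t))^{\alpha_n}$ between $\widehat M_n(t)$ and $\widehat M_n(t)+o_{\PP}(1)$ as one sends first $n\to\infty$ and then $\delta\to 0$.

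\emph{Step 3 (upgrade to $J_1$).} Both processes are non-decreasing right-continuous step functions in $t$; the jumps of $(S_n)^{\alpha_n}$ of size exceeding a fixed $\delta$ occur, up to a vanishing error, exactly at those indices where $Z_{n,i}^{\alpha_n}>\delta$, i.e.\ at essentially the same locations as the corresponding jumps of $\widehat M_n$. Together with Step~2 this gives tightness in $J_1$ and matching of jump locations and heights, so $(S_n)^{\alpha_n}\Rightarrow M_\nu$ in the $J_1$-topology.

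The main obstacle I expect is Step 3: although pointwise-in-$t$ convergence of $(S_n(t))^{\alpha_n}$ to $M_\nu(t)$ follows fairly cleanly from Steps 1--2, the $J_1$-topology demands uniform matching of jump locations and sizes on compacts. The many small increments $(S_n)^{\alpha_n}$ picks up at every index $i$, which are absent from $\widehat M_n$, must be shown to be uniformly negligible---and this is precisely what condition \eqv(1.6) delivers via the truncation at level $\delta^{1/\alpha_n}$ followed by $\delta\to 0$.
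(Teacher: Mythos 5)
Your proposal is correct and runs essentially parallel to the paper's proof; the architecture (Durrett--Resnick point-process convergence, $\delta$-truncation, condition \eqv(1.6) plus Markov to kill the small terms, Billingsley Theorem~4.2 to combine) is the same. The one genuine difference is how the truncated power sum is compared to the limiting max: the paper treats $(S_n^>(\cdot,\delta))^{\alpha_n}$ as the image $T_n^\delta\xi_n$ of the point process under an $n$-dependent ``$1/\alpha_n$-norm'' functional and proves $T_n^\delta\xi_n\limj T^\delta\xi$ via a continuous-mapping theorem for sequences of maps (Billingsley Thm.\ 5.5, supported by lemmas from Mori--Oodaira and Neveu on local convergence of points). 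You instead pinch $(S_n^>(\cdot,\delta))^{\alpha_n}$ between $\widehat M_n$ and $(N_n^{(\delta)})^{\alpha_n}\widehat M_n$ by hand, exploiting tightness of the exceedance count and $K^{\alpha_n}\to 1$, and show $\widehat M_n\limj M_\nu$ with the fixed sup functional. Your route avoids the $n$-dependent mapping machinery and its ancillary lemmas at the cost of the extra counting estimate; the two are of comparable length and both are sound. One remark on Step~3: you single out the $J_1$ upgrade as the expected obstacle, but given Step~2 it is actually immediate, because your pinching is already uniform in $t$ on compacts --- both error terms $(S_n^\leq(T,\delta))^{\alpha_n}$ and $\bigl((N_n^{(\delta)}(T))^{\alpha_n}-1\bigr)\widehat M_n(T)$ dominate the whole path on $[0,T]$ --- so $\sup_{t\le T}|S_n^{\alpha_n}(t)-\widehat M_n(t)|\to 0$ in probability, the uniform metric dominates the $J_1$ metric, and a standard converging-together lemma closes the argument. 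You need not discuss matching of jump times or heights at all.
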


In the sequel we denote by $\limj$ weak convergence in $D([0,\infty))$ equipped with the Skorokhod $J_1$-topology.


In order to use Theorem \ref{general} in the Markov jump process setting of Section \ref{S11a}, we specify $Z_{n,i}$. In doing this we will be 
guided by the knowledge acquired in earlier works  \cite{G10a, G10b, BG10}: introducing a new scale $\theta_n$ we take $Z_{n,i}$ 
to be a block sum of length $\theta_n$, i.e.~we set
\be\label{1.8}
Z_{n,i}\equiv \sum_{j=(i-1)\theta_n+1}^{i \theta_n} c_n^{-1} \lambda^{-1}_n(J_n(j)) e_{n,j} \ .
\ee
The r\^ole of $\theta_n$ is to de-correlate the variables $Z_{n,i}$ under the law $\PP_{\mu_n}$. In models with uncorrelated environments and where
the probability of revisiting points is small, one may hope to take $\theta_n=1$. 
When the environment is correlated and the chain $J_n$ is rapidly mixing,
one may try to choose $\theta_n\ll a_n$ in such a way that, 
the variables $Z_{n,i}$ are close to independent.
These two situations were encountered in the random hopping dynamics of the Random Energy Model in \cite{G10b}, and the $p$-spin models in 
\cite{BG10} respectively.
Theorem \ref{thmblock} below specializes Theorem \ref{general} to these $Z_{n,i}$'s.

For $y\in\VV_n$ and $u>0$ let
\be
Q^{u}_n(y)\equiv\PP_{y}\Biggl(
\sum_{j=1}^{\theta_n}\lambda^{-1}_n(J_n(j))e_{n,j}>c_n u^{1/ \alpha_n}\Biggr) 
\Eq(1.9)
\ee
be the tail distribution of the blocked jumps of $X_n$, when $X_n$ starts in $y$. 
Furthermore, for $k_n(t)\equiv\left\lf{\lf a_n t\rf}/{\theta_n}\right\rf$, $t>0$, and $u>0$ define
\bea\Eq(1.10)
\nu_n^{J,t}(u,\infty) &\equiv& \sum_{i=1}^{k_n(t)} \sum_{y\in\VV_n}p_n(J_n(\theta_n i),y)Q^{u}_n(y)\ ,
\\
(\s_n^{J,t})^2(u,\infty)
&\equiv& \sum_{i=1}^{k_n(t)}\left[ \sum_{y\in\VV_n}p_n(J_n(\theta_n i),y)Q^{u}_n(y)\right]^2\ .
\Eq(1.11)
\eea

Using this notation, we rewrite Conditions \eqref{1.4}-\eqref{1.6}.
Note that $Q^{u}_n(y)$ is a random variable on the probability space $(\O, \FF, \P)$, and so are the quantities $\nu_n^{J,t}(u,\infty)$ and $\s_n^{J,t}(u,\infty)$.  The conditions below are stated for fixed realization of the random environment as well as for given sequences
 $a_n$, $c_n$, $\theta_n$, and $\alpha_n$ such that $a_n,c_n\uparrow \infty$, and $\alpha_n\downarrow 0$ as $n\rightarrow\infty$.\\
\noindent{\bf Condition (1)}
Let $\nu$ be a $\s$-finite measure on $(0,\infty)$ with $\nu(0,\infty)=\infty$ and such that for all $t>0$ and all $u>0$
\be
\lim_{n\rightarrow \infty}P_{\mu_n}\Bigl( \left| \nu_n^{J,t}(u,\infty)-t\nu(u,\infty) \right|>\e\Bigr)=0\ ,\quad\forall\e>0\ .
\Eq(1.12)
\ee

\noindent{\bf Condition (2)}  For all $u>0$ and all $t>0$,
\be
\lim_{n\rightarrow \infty}P_{\mu_n}\Bigl( (\s_n^{J,t})^2(u,\infty)>\e\Bigr)=0\ ,\quad\forall\e>0\ .
\Eq(1.13)
\ee

\noindent {\bf Condition (3)}  For all $t>0$ and all $\delta>0$
\be\Eq(1.14)
\limsup_{n\rightarrow \infty}
\Biggl(\sum_{i=1}^{\lf a_n t\rf}\EE_{\mu_n} \1_{\{\lambda^{-1}_n(J_n(i))e_{n,i}\leq \delta^{1/\alpha_n}c_n\}} 
(c_n \delta^{1/\alpha_n})^{-1}\lambda^{-1}_n(J_n(i))e_{n,i}\Biggr)^{\alpha_n}<\infty \ .
\ee

\noindent{\bf Condition (0)}  For all $v>0$,
\be
\lim_{n \rightarrow \infty}\sum_{x\in\VV_n}\mu_n(x)e^{-v^{1/ \alpha_n}c_n \lambda_n(x)}=0\ .
\Eq(1.15)
\ee
For $t>0$ set
\be \label{1.16}
\left(S_n^b (t)\right)^{\alpha_n} \equiv \Biggl( \sum_{i=1}^{k_n(t)}
\Biggl(\sum_{j=\th_n(i-1)+1}^{\theta_n i}c_n^{-1}\lambda^{-1}_n(J_n(j))e_{n,j}\Biggr) + c_n^{-1}\lambda^{-1}_n(J_n(0)) e_{n,0} \Biggr)^{\alpha_n}.
\ee
\begin{theorem}\label{thmblock}
If for a given initial distribution $\mu_n$ and given sequences $a_n, c_n, \theta_n$, and $\alpha_n$, Conditions (0)-(3) are satisfied $\P$-a.s., respectively in $\P$-probability, then
\be \label{1.16a}
\left(S_n^b\right)^{\alpha_n}  \limj M_{\nu}\ ,
\ee
where convergence holds $\P$-a.s., respectively in $\P$-probability.
\end{theorem}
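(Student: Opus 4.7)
My plan is to apply Theorem~\ref{general} to the block variables $Z_{n,i}$ of~\eqref{1.8}, run at the effective rate $\wt a_n := a_n/\theta_n$ (so that $\lf\wt a_n t\rf$ coincides with $k_n(t)$ up to at most one block), using the enlarged filtration $\GG_{n,i}:=\sigma(J_n(j),e_{n,j}\,:\,j\le i\theta_n)$ to which the $Z_{n,i}$ are adapted. As a preliminary reduction, the definition~\eqref{1.16} of $S_n^b$ gives
\be\label{plan:split}
\bigl(S_n^b(t)\bigr)^{\alpha_n}=\Bigl(\sum_{i=1}^{k_n(t)}Z_{n,i}+c_n^{-1}\lambda_n^{-1}(J_n(0))e_{n,0}\Bigr)^{\alpha_n}.
\ee
Since $e_{n,0}$ is exponential of mean $1$, $\PP_{\mu_n}\bigl(c_n^{-1}\lambda_n^{-1}(J_n(0))e_{n,0}>v^{1/\alpha_n}\bigr)=\sum_{x}\mu_n(x)e^{-v^{1/\alpha_n}c_n\lambda_n(x)}$, and Condition~(0) is precisely the statement that this initial term vanishes in $\PP_{\mu_n}$-probability (quenched), so it is asymptotically negligible and it suffices to prove convergence of the partial sums $\sum_{i=1}^{k_n(t)}Z_{n,i}$ raised to the $\alpha_n$.

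The heart of the argument is to verify hypotheses~\eqref{1.4}--\eqref{1.6} of Theorem~\ref{general} for the $Z_{n,i}$'s. By the Markov property of $J_n$ and the independence of the $e_{n,\cdot}$,
\be\label{plan:markov}
\PP_{\mu_n}\bigl(Z_{n,i}^{\alpha_n}>u\bigm|\GG_{n,i-1}\bigr)=\sum_{y\in\VV_n}p_n\bigl(J_n((i-1)\theta_n),y\bigr)\,Q^u_n(y),
\ee
so summing over $i=1,\dots,k_n(t)$ reproduces (modulo a one-block index shift between $J_n((i-1)\theta_n)$ and $J_n(\theta_n i)$) the random variable $\nu_n^{J,t}(u,\infty)$ from~\eqref{1.10}; Condition~(1) then yields~\eqref{1.4}. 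Squaring~\eqref{plan:markov} and summing produces $(\sigma_n^{J,t})^2(u,\infty)$, so Condition~(2) gives~\eqref{1.5}. For~\eqref{1.6} one uses the pointwise inequality
\be\label{plan:truncbound}
\1_{\{Z_{n,i}\le\delta^{1/\alpha_n}\}}Z_{n,i}\le\sum_{j=(i-1)\theta_n+1}^{i\theta_n}\1_{\{c_n^{-1}\lambda_n^{-1}(J_n(j))e_{n,j}\le\delta^{1/\alpha_n}\}}c_n^{-1}\lambda_n^{-1}(J_n(j))e_{n,j},
\ee
which holds because on the event $\{Z_{n,i}\le\delta^{1/\alpha_n}\}$ every summand obeys the same bound; this transfers Condition~(3) at the unit-increment level to the required block-level bound~\eqref{1.6}.

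Theorem~\ref{general} is stated with the $Z$-filtration $\FF_{n,\cdot}$ rather than $\GG_{n,\cdot}$, but since the $Z_{n,i}$ are adapted to $\GG_{n,\cdot}$ the Durrett--Resnick arguments underlying the proof of Theorem~\ref{general} go through unchanged for this richer filtration; alternatively the $\FF$-version of~\eqref{1.4}--\eqref{1.5} follows from the $\GG$-version by the tower property (and Jensen's inequality for the squared condition). Theorem~\ref{general} thus yields $(\sum_{i=1}^{k_n(t)}Z_{n,i})^{\alpha_n}\limj M_\nu$, which combined with the preliminary reduction~\eqref{plan:split} gives $(S_n^b)^{\alpha_n}\limj M_\nu$ as required. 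The $\P$-a.s.\ versus $\P$-in-probability dichotomy is inherited directly from the corresponding dichotomy assumed in Conditions~(0)--(3), since every step above is carried out for a fixed realisation of the environment.

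The principal technical point I expect lies in the middle paragraph: matching~\eqref{plan:markov} summed over $i$ precisely with the definition~\eqref{1.10} of $\nu_n^{J,t}$, absorbing the one-block index shift and the residual boundary block from $\lf a_n t\rf\bmod\theta_n$ (each an isolated block whose contribution is $o_{\PP}(1)$ under Conditions~(1)--(3)), and checking~\eqref{plan:truncbound} so that Condition~(3) transfers cleanly. Everything else --- the filtration enlargement and the invocation of Theorem~\ref{general} --- is standard within the Durrett--Resnick framework once the conditional computations on $\GG_{n,\cdot}$ are in place.
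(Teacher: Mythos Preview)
Your proposal is correct and follows essentially the same approach as the paper: reduce to $\wh S_n^b$ via Condition~(0) and Jensen's inequality, then verify the hypotheses of Theorem~\ref{general} for the block variables $Z_{n,i}$, using the Markov property for~\eqref{1.4}--\eqref{1.5} and the indicator domination~\eqref{plan:truncbound} for~\eqref{1.6}. The paper is terser --- it delegates the verification that Conditions~(1)--(2) imply~\eqref{1.4}--\eqref{1.5} (including the filtration and index-shift issues you spell out) to the proof of Theorem~1.2 in~\cite{BG10} --- but your explicit treatment of these points is accurate and matches what that reference does.
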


\begin{remark}
Theorem \ref{thmblock} tells us that the blocked clock process $(S_n^b)^{\alpha_n}$converges to $M_{\nu}$ weakly in $D([0,\infty))$ equipped with the Skorokhod $J_1$-topology. This implies that the clock process $(S_n)^{\alpha_n}$ converges to the same limit in the weaker $M_1$-topology (see \cite{BG10} for further discussion).
\end{remark}
\begin{remark}
The extra Condition (0) serves to guarantee that the last term in  \eqref{1.16} is asymptotically negligible.
\end{remark}
Finally, following \cite{BG10}, we specialize Conditions (1)-(3) under the assumption that the chain $J_n$ obeys a mixing condition (see Condition (2-1) below).  Conditions (1)-(2) of Theorem \ref{thmblock} are then reduced to laws of large numbers for the random variables 
 $Q^{u}_n(y)$.
Again we state these conditions for fixed realization of the random environment and given sequences
$a_n$, $c_n$, $\theta_n$, and $\alpha_n$.\\
\noindent{\bf Condition (1-1)} Let $J_n$ be a periodic Markov chain
with period $q$.
There exists a positive decreasing sequence $\rho_n$, satisfying
$\rho_n\downarrow 0$ as $n\rightarrow\infty$, such that, for all pairs $x,y\in\VV_n$, and all $i\geq 0$,
\be
\sum_{k=0}^{q-1}P_{\pi_n}\left(J_n(i+\theta_n+k)=y,J_n(0)=x\right)\leq (1+\rho_n)\pi_n(x)\pi_n(y)\ .
\Eq(1.17)
\ee

\noindent{\bf Condition (2-1)}
There exists a $\s$-finite measure $\nu$ with $\nu(0,\infty)=\infty$ and such that
\be\Eq(1.18)
\nu_n^{t}(u,\infty)\equiv k_n(t)\sum_{x\in\VV_n}\pi_n(x)Q^{u}_n(x)
\rightarrow t\nu(u,\infty)\ ,
\ee
and
\be\Eq(1.19)
(\s_n^{t})^2(u,\infty)\equiv k_n(t)\sum_{x\in\VV_n}\sum_{x'\in\VV_n}\pi_n(x)p_n^{(2)}(x,x')Q^{u}_n(x)Q^{u}_n(x')\rightarrow 0\,,
\ee
where 
$p_n^{(2)}(x,x')=\sum_{y\in\VV_n}p_n(x,y)p_n(y,x')$ are the 2-step transition probabilities.\\[2mm]

\noindent {\bf Condition (3-1)}
 For all $t>0$ and $\delta >0$
\be\Eq(1.20)
\limsup_{n\rightarrow \infty}
\left( \lf a_n t\rf \EE_{\pi_n} \1_{\{\lambda^{-1}_n(J_n(1))e_{n,1}\leq c_n \delta^{1/ \alpha_n} \}} c_n^{-1}\delta^{-1/ \alpha_n}\lambda^{-1}_n(J_n(1))e_{n,1}\right)^{\alpha_n} < \infty \ . 
\ee

\begin{theorem}\label{thmblock1}
Let $\mu_n=\pi_n$. If for given  sequences $a_n, c_n$, $\theta_n\ll a_n$, and $\alpha_n$, Conditions (1-1)-(3-1) and (0) are satisfied $\P$-a.s., respectively in $\P$-probability, then $(S_n^b)^{\alpha_n} \limj M_{\nu}$, $\P$-a.s., respectively in $\P$-probability.
\end{theorem}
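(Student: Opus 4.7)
The plan is to derive Conditions (1)--(3) of Theorem \ref{thmblock} from the stationary hypotheses (1-1)--(3-1), and then invoke Theorem \ref{thmblock}. All estimates below involve only the jump chain $J_n$ and the exponential clocks (i.e.~are bounds under $P_{\pi_n}$) and hold pointwise in the environment on any event on which (1-1)--(3-1) and (0) are assumed, so the ``$\P$-a.s.\ versus in $\P$-probability'' dichotomy propagates from hypotheses to conclusion automatically. Throughout I abbreviate $f_n(x)\equiv\sum_{y\in\VV_n}p_n(x,y)Q^u_n(y)$, so that
\[
\nu_n^{J,t}(u,\infty)=\sum_{i=1}^{k_n(t)}f_n(J_n(\theta_n i)),\qquad (\s_n^{J,t})^2(u,\infty)=\sum_{i=1}^{k_n(t)}f_n(J_n(\theta_n i))^2.
\]

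Condition (0) is identical in both theorems. Condition (3) follows at once from (3-1): under $\mu_n=\pi_n$ stationarity makes the pairs $(J_n(i),e_{n,i})$ identically distributed in $i$, so the sum in \eqref{1.14} equals $\lf a_n t\rf$ times the single expectation in \eqref{1.20}. For Condition (2), stationarity gives $\EE_{\pi_n}[(\s_n^{J,t})^2(u,\infty)]=k_n(t)\EE_{\pi_n}[f_n(J_n(0))^2]$; expanding the square and applying reversibility $\pi_n(x)p_n(x,y)=\pi_n(y)p_n(y,x)$ to the inner summation rewrites this exactly as the quantity $(\s_n^t)^2(u,\infty)$ of \eqref{1.19}, which tends to zero by (2-1). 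Markov's inequality applied to the non-negative random variable $(\s_n^{J,t})^2(u,\infty)$ then delivers Condition (2).

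Condition (1) is the main step. Stationarity gives $\EE_{\pi_n}[\nu_n^{J,t}(u,\infty)]=k_n(t)\sum_y\pi_n(y)Q^u_n(y)=\nu_n^t(u,\infty)\to t\nu(u,\infty)$ by (2-1), so by Chebyshev it suffices to show $\Var_{\pi_n}(\nu_n^{J,t}(u,\infty))\to 0$. Decompose the variance as a double sum over $1\leq i,j\leq k_n(t)$ of covariances $\Cov_{\pi_n}(f_n(J_n(\theta_n i)),f_n(J_n(\theta_n j)))$. The diagonal contribution $i=j$ is at most $k_n(t)\EE_{\pi_n}[f_n(J_n(0))^2]=(\s_n^t)^2(u,\infty)\to 0$ by the computation above. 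For $i<j$, set $m\equiv\theta_n(j-i)\geq\theta_n$ and take the free parameter in \eqref{1.17} to be $m-\theta_n\geq 0$; non-negativity of $f_n$ lets us bound
\[
\EE_{\pi_n}\bigl[f_n(J_n(\theta_n i))f_n(J_n(\theta_n j))\bigr]\leq \sum_{k=0}^{q-1}\EE_{\pi_n}\bigl[f_n(J_n(0))f_n(J_n(m+k))\bigr]\leq (1+\rho_n)\bigl(\EE_{\pi_n}f_n(J_n(0))\bigr)^2,
\]
so each off-diagonal covariance is at most $\rho_n\bigl(\EE_{\pi_n}f_n(J_n(0))\bigr)^2$. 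Summed over the $\leq k_n(t)^2$ off-diagonal pairs, the contribution is bounded by $\rho_n\,\nu_n^t(u,\infty)^2\to 0$, since $\rho_n\downarrow 0$ and $\nu_n^t(u,\infty)$ stays bounded by (2-1).

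The main obstacle is precisely this last covariance estimate: hypothesis \eqref{1.17} controls only sums of joint probabilities over a full period window $\{m,\dots,m+q-1\}$, whereas the variance decomposition produces single-gap terms at $m=\theta_n(j-i)$; the trick is to exploit $f_n\geq 0$ to dominate one pair expectation by the corresponding $q$-step window sum, to which (1-1) applies. Once Conditions (0) and (1)--(3) have been verified in the requested mode, Theorem \ref{thmblock} with $\mu_n=\pi_n$ immediately yields $(S_n^b)^{\alpha_n}\limj M_\nu$ $\P$-almost surely, respectively in $\P$-probability, which is the claim.
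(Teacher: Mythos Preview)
Your argument is correct and follows the same route as the paper. The paper's own proof is a two-line reduction: it invokes Proposition~2.1 of \cite{BG10} to pass from Conditions (1-1) and (2-1) to Conditions (1) and (2), and observes that under $\mu_n=\pi_n$ Condition (3-1) is literally Condition (3). What you have done is unpack the content of that cited proposition---computing the $P_{\pi_n}$-mean of $\nu_n^{J,t}$ by stationarity, identifying the diagonal second moment with $(\s_n^t)^2$ via reversibility, and bounding the off-diagonal covariances by the mixing hypothesis (1-1) after dominating a single-gap term by the full $q$-window sum. This is precisely the mechanism behind the cited result, so your proof is a self-contained version of the paper's, not a different approach.
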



\subsection{Application to the $p$-spin SK model.} \label{S12} In this section we illustrate the power of  
Theorem \ref{thmblock1} by applying it to the $p$-spin SK models, including the SK model itself, i.e. $p\geq 2$.
The underlying graph $\VV_n$ is the hypercube $\S_n=\{-1,1\}^n$.
The Hamiltonian of the $p$-spin SK model is a Gaussian process, $H_n$, on $\S_n$ with zero mean and covariance 
\be\Eq(1.21)
\E H_n(x)H_n(x') =nR_n(x,x')^p,
\ee
where $R_n(x,x')\equiv 1- \frac{2\dist(x,x')}{n}$ and $\dist(\cdot,\cdot)$ is the graph distance on $\S_n$,
\be\Eq(1.22)
\dist(x,x')\equiv\frac 12 \sum_{i=1}^n |x_i-x'_i|.
\ee
The random environment, $\t_n(x)$, is defined in terms of $H_n$ through
\be\Eq(1.23)
\t_n(x)\equiv \exp(\b H_n(x)),
\ee
where $\b>0$ is the inverse temperature. The Markov chain, $J_n$, is
chosen as the simple random walk on $\S_n$, i.e.
\be\Eq(1.24)
p_n(x,x')= \begin{cases} \frac 1n, &\hbox{\rm if}\, \dist(x,x')=1,\\
0,&\hbox{\rm else}.
\end{cases}
\ee
This chain has unique invariant measure $\pi_n(x)=2^{-n}$. 
Finally, choosing $C=2^n$ in \eqref{1.0}, the mean holding times, $\l^{-1}_n(x)$,
reduce to $\l^{-1}_n(x)= \t_n(x)$.
This defines the so-called \emph{random hopping dynamics}.

In the theorem below the inverse temperature $\beta$ is to be chosen as a sequence $(\beta_n)_{n\in \N}$ that either  diverges or converges to a strictly positive limit. 
\begin{theorem}
  \label{p:main}
Let $\nu$ be given by $\nu(u,\infty)\equiv K_p u^{-1}$ for $u \in (0,\infty)$ and $K_p= 2 p$. Let $\gamma_n, \beta_n$ be such that $\gamma_n=n^{-c}$ for $c \in \left(0,\frac 12\right)$, $\beta_n\geq \beta_0$ for some $\beta_0>0$, and $\gamma_n \beta_n \leq O(1)$. Set $\alpha_n\equiv \gamma_n/\beta_n$.
\noindent Let $\theta_n= 3n^2$ be the block length and define the jump scales $a_n$ and time scales $c_n$ via
\bea\label{1.25}
a_n &\equiv& \sqrt{2\pi n}\ \gamma_n^{-1}\ e^{\frac 12 \gamma_n^2 n},\\
c_n &\equiv& e^{\gamma_n \beta_n n}. \ \label{1.26}
\eea
Then $\left(S_n^b\right)^{\alpha_n} \limj M_{\nu}$. Convergence holds $\P$-a.s. for $p>5$ and in $\P$-probability for $p=2,3,4$. For $p=5$ it holds $\P$-a.s. if $c \in \left(0,  \frac 14\right)$ and in $\P$-probability else.
\end{theorem}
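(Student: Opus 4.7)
The plan is to verify Conditions (0), (1-1), (2-1), and (3-1) of Theorem \ref{thmblock1} for the $p$-spin SK setup, since $\mu_n=\pi_n=2^{-n}$. Once they are established, the conclusion follows directly from Theorem \ref{thmblock1}, with the mode of convergence (almost sure vs.\ in probability) determined by the mode in which the Conditions hold.

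First, I would dispose of Condition (1-1) using standard spectral information for the simple random walk on $\S_n$. The walk is periodic of period $q=2$, and its non-trivial eigenvalues $\{1-2k/n\}_{k=1}^{n-1}$ decay geometrically. For $\theta_n = 3n^2$, elementary Fourier analysis on the hypercube yields, uniformly in $x,y$,
\be
\sum_{k=0}^{1}P_{\pi_n}\bigl(J_n(i+\theta_n+k)=y,J_n(0)=x\bigr)\leq \pi_n(x)\pi_n(y)\bigl(1+e^{-cn}\bigr),
\ee
so one takes $\rho_n=e^{-cn}$. This step requires no probabilistic input from the environment.

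The core of the proof is Condition (2-1). For the mean \eqref{1.18}, observe that $c_n u^{1/\alpha_n} = \exp(\b_n \g_n n + \b_n \g_n^{-1}\log u)$, so that $P\bigl(\t_n(x) e > c_n u^{1/\alpha_n}\bigr)$ is a Gaussian tail probability for $H_n(x)\sim\NN(0,n)$ evaluated at level $\g_n n + \g_n^{-1}\log u$. Expanding the quadratic in the exponent, using that $n\g_n^2\to\infty$ (since $c<1/2$), and absorbing the exponential factor from the exponential random variable, one obtains
\be
P\bigl(\t_n(x)e > c_n u^{1/\alpha_n}\bigr) = \frac{u^{-1}}{\sqrt{2\pi n}\,\g_n}\,e^{-\g_n^2 n/2}\bigl(1+o(1)\bigr).
\ee
Next, I would approximate $Q_n^u(y)$ by $\th_n$ times the single-step probability, with the error controlled by union bound and the fact that the walk on $\S_n$ visits $\th_n$ distinct vertices with overwhelming probability. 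The annealed expectation $\E[Q_n^u(y)]$ therefore produces, after multiplication by $k_n(t)\approx a_n t/\th_n$ and by the definition \eqref{1.25} of $a_n$, the limit $t\,K_p u^{-1}$, with the combinatorial prefactor $K_p=2p$ arising from the $p$-dependent second-moment corrections coming from returns and short loops in the random walk (which pick up the covariance $R_n^p$). To pass from annealed to quenched, I would compute the variance (under $\P$) of $\sum_x \pi_n(x) Q_n^u(x)$ using the Gaussian structure of $H_n$: pairs $(x,x')$ with covariance bounded away from $\pm 1$ contribute negligibly, and the dangerous pairs are those close in Hamming distance. This is the step where the restriction on $p$ enters: the off-diagonal contribution involves $R_n(x,x')^p$, and showing it goes to zero fast enough to invoke Borel--Cantelli requires $p>5$ (or $p=5$ with $c<1/4$), whereas for smaller $p$ one only obtains convergence in $\P$-probability via Chebyshev. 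The quadratic-variation Condition \eqref{1.19} is much easier: each term is of order $(a_n^{-1})^2 u^{-2}$, summed $k_n(t)=a_n t/\th_n$ times, giving a bound $O\bigl(t u^{-2}/(a_n \th_n)\bigr)\to 0$, because $\th_n=3n^2\to\infty$.

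Condition (3-1) is a truncated-mean estimate that reduces to controlling $\EE_{\pi_n}[\t_n(J_n(1))e_{n,1}\bbone_{\t_n e \leq c_n\d^{1/\alpha_n}}]$. Integrating the Gaussian density of $H_n$ against the truncation threshold and matching exponents produces a bound of order $c_n\d^{1/\alpha_n}$ times the tail probability computed above, so the whole bracket is of order $(a_n t)^{-1}\cdot a_n t \cdot \d^{1/\alpha_n}\cdot O(1)$; raising to the power $\alpha_n$ gives a uniform bound in $\d$, as required. Condition (0) is immediate: one splits $\S_n$ according to whether $\t_n(x)\geq c_n^{1/2}$ or not; on the first set the measure under $\pi_n$ is a Gaussian tail going to zero, and on its complement $e^{-v^{1/\alpha_n}c_n/\t_n(x)}\leq e^{-v^{1/\alpha_n}c_n^{1/2}}\to 0$. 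The main obstacle is, as noted, the variance computation for the quenched upgrade of \eqref{1.18}: it is where the covariance structure $R_n^p$ of the Hamiltonian intervenes, and where the dichotomy between almost-sure and in-probability convergence in the final statement originates.
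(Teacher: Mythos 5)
Your overall plan --- reducing to Theorem \ref{thmblock1} and checking Conditions (0), (1-1), (2-1), (3-1) separately, with the $p$-dependence of the convergence mode coming from a variance bound for the first part of (2-1) --- matches the paper's architecture, and your treatment of Conditions (0), (1-1) and (3-1) is sound in outline. The serious problem is in the way you propose to evaluate the mean in Condition (2-1).

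You compute the single-site tail $\P\bigl(\tau_n(x)e > c_n u^{1/\alpha_n}\bigr) \sim u^{-1}(\sqrt{2\pi n}\,\gamma_n)^{-1}e^{-\gamma_n^2 n/2}$, which is correct; but this expression equals $a_n^{-1}\gamma_n^{-2}u^{-1}$, \emph{not} $a_n^{-1}u^{-1}$, with the given $a_n=\sqrt{2\pi n}\,\gamma_n^{-1}e^{\gamma_n^2 n/2}$. If you then set $Q_n^u(y)\approx \theta_n\cdot\P(\tau_n(x)e>c_n u^{1/\alpha_n})$, as you propose, you obtain $k_n(t)\sum_y\pi_n(y)Q_n^u(y)\approx t\,\gamma_n^{-2}u^{-1}\to\infty$, not $K_p t u^{-1}$. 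The ``missing'' factor is not a bounded correction that returns or short loops could supply; it is a divergent $\gamma_n^{-2}$, and it disappears precisely because exceedances of the level $c_n u^{1/\alpha_n}$ \emph{cluster}: the walk moves by one coordinate per step, the process $i\mapsto n^{-1/2}H_n(J_n(i))$ has covariance $\approx 1-2p|i-j|/n$ for nearby $i,j$, so a single exceedance is accompanied by $O(\gamma_n^{-2})$ nearby exceedances. Consequently $\P(\max_{i\le\theta_n}\cdots>c_nu^{1/\alpha_n})$ is smaller than the union bound by a factor of order $\gamma_n^2$, and the constant $K_p=2p$ is the limit of the ratio, not a ``combinatorial prefactor from second-moment corrections''. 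The paper handles this via a replacement of $H^0(i)=n^{-1/2}H_n(J_n(i))$ by a block-linear comparison process $H^1$ (with covariance $1-2p|i-j|/n$ on blocks of length $v_n$), a dilution/Laplace-transform estimate from \cite{BAGun11} (cf.\ Lemma \ref{lemma2.4} and \eqref{3.26}--\eqref{3.27}) that makes the clustering quantitative, and a Slepian-type comparison (Lemma \ref{lemma2.5}) to transfer back to $H^0$. Your sketch contains none of this, and the value $K_p=2p$ therefore does not actually follow from the steps you describe. A second, minor, point: in your estimate for \eqref{1.19} the per-term size of $(Q_n^u)^2$ is $O(\theta_n^2 a_n^{-2}u^{-2})$, not $O(a_n^{-2}u^{-2})$, so the correct bound is $O(tu^{-2}\theta_n/a_n)$ rather than $O(tu^{-2}/(a_n\theta_n))$; the conclusion $\to 0$ is unaffected.

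To summarize: the reduction to Theorem \ref{thmblock1} and the identification of the variance of $\nu_n^t$ (and hence $p$, $c$) as the source of the a.s.\ versus in-probability dichotomy are on target, but the central step --- showing $\E\nu_n^t(u,\infty)\to 2p\,t\,u^{-1}$ --- cannot be done by a union bound plus a hand-waved $p$-dependent correction; it requires a genuine Pickands/Gaussian-comparison argument that accounts for the clustering of exceedances, which is where the factors $\gamma_n^{-2}$ and $2p$ come from.
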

\begin{remark}Theorem \ref{p:main} immediately implies that $(S_n)^{\alpha_n} \limm M_{\nu}$ on $D([0,\infty))$ equipped with the weaker $M_1$- topology.
\end{remark}

In \cite{BAGun11}
an analogous result is proven in law with respect to the environment for similar conditions on the sequence $\gamma_n$ and fixed $\b$.

Let us comment on the  conditions on $\gamma_n$ and $\beta_n$ in Theorem \ref{p:main}.  They guarantee that 
 $\alpha_n\downarrow 0$ as $n\rightarrow \infty$, and that both sequences $a_n$ and $c_n$ diverge as $n\rightarrow \infty$.
Note here that different choices of the sequence $\beta_n$ correspond to different time scales $c_n$. If $\beta_n\rightarrow  \beta>0$,  as $n\rightarrow\infty$, then $c_n$ is sub-exponential in $n$, while in the case of diverging $\beta_n$, $c_n$ can be as large as 
exponential in $O(n)$.
Finally these conditions guarantee
that the rescaled tail distribution of the $\tau_n$'s, on time scale $c_n$, is regularly varying with index $-\alpha_n$.


We use Theorem \ref{p:main} to derive the limiting behavior of the time correlation function $\CC_n^{\varepsilon}(t,s)$ which, for $t>0$, $s>0$, and $\varepsilon\in(0,1)$ is given by
\be\label{1.26a}
\CC_n^{\varepsilon}(t,s) \equiv  \PP_{\pi_n}\left(A_n^{\varepsilon}(t,s)\right) \ ,
\ee
where $A_n^{\varepsilon}(t,s)\equiv\left\{R_n\left(X_n(t^{1/\alpha_n} c_n), X_n((t+s)^{1/\alpha_n} c_n)\right)\geq 1-\varepsilon\right\}$.

\begin{theorem}
  \label{p:corr}
Under the assumptions of Theorem \ref{p:main}, 
\be\label{1.28}
\lim_{n\rightarrow\infty} \CC_n^{\varepsilon}(t,s) =\frac{t}{t+s}, \quad \forall \varepsilon\in(0,1),\  t,s>0 .
\ee
Convergence holds $\P$-a.s. for $p> 5$ and in $\P$-probability for $p=2,3,4$. For $p=5$ it holds $\P$-a.s. if $c \in \left(0,  \frac 14\right)$ and in $\P$-probability else.
\end{theorem}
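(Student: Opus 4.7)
The plan is to reduce $\CC_n^{\varepsilon}(t,s)$ to a functional of the rescaled clock process $(S_n^b)^{\alpha_n}$, invoke Theorem \ref{p:main}, and compute the limit from the extremal-process structure of $M_\nu$. With $T_i$ ($i=1,2$) denoting the two rescaled physical times, I first use the identity $X_n(T) = J_n(\ell_n(T))$, where $\ell_n(T):=\sup\{k:\wt S_n(k)\leq T\}$, to re-express $A_n^{\varepsilon}(t,s)$ in terms of the jump chain at indices $\ell_n(T_1)$ and $\ell_n(T_2)$. The governing mechanism is the \emph{deep-trap picture}: under the random hopping dynamics, $X_n$ spends almost all of its time at ``deep traps'', i.e.\ sites $x$ with $\tau_n(x)$ of order $c_n$. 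Introducing the set $\mathcal{T}_n^\eta:=\{x\in\VV_n:\tau_n(x)\geq \eta c_n\}$ for small $\eta>0$, I would show, taking successively $n\to\infty$ then $\eta\downarrow 0$: (i)~with $\PP_{\pi_n}$-probability tending to one, $X_n(T_i)$ coincides with the most recent element of $\mathcal T_n^\eta$ visited by $J_n$ before time $T_i$; (ii)~any two distinct deep traps visited by $J_n$ in $[0,T_2]$ lie at Hamming distance $>\varepsilon n/2$ with $\PP_{\pi_n}$-probability $\to 1$. Together (i)--(ii) imply that, on a high-probability event, $A_n^{\varepsilon}(t,s)$ coincides with $\{X_n(T_1)=X_n(T_2)\}$, i.e.\ the walker does not leave its current site during $(T_1,T_2]$.

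Next, I would translate $\{X_n(T_1)=X_n(T_2)\}$ into a clock-process event. Let $L_n(u):=\inf\{v:(S_n^b(v))^{\alpha_n}\geq u\}$ denote the first-passage inverse. Then, up to errors absorbed by the block discretization and the cut-off $\eta$, this event coincides with $\{L_n(t)=L_n(t+s)\}$: a single ``big'' block-increment of $(S_n^b)^{\alpha_n}$ crosses both levels $t$ and $t+s$. By Theorem \ref{p:main}, $(S_n^b)^{\alpha_n}\limj M_\nu$, and by the standard continuous-mapping theorem for first-passage inverses of non-decreasing $J_1$-convergent processes,
\[
\PP_{\pi_n}\bigl(\{X_n(T_1)=X_n(T_2)\}\bigr) \;\longrightarrow\; P\bigl(L_\nu(t)=L_\nu(t+s)\bigr),
\]
with the mode of convergence ($\P$-a.s.\ or in $\P$-probability) inherited from Theorem \ref{p:main}.

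The limit value is then obtained from the Poisson-point-process representation of $M_\nu$ with intensity $dv\otimes\nu(du)$ and $\nu(u,\infty)=2p/u$. The event $\{L_\nu(t)=L_\nu(t+s)\}$ is precisely the event that the first PPP point with second coordinate $>t$ has second coordinate $>t+s$. Since the restriction of the PPP to heights $>t$ has intensity $\nu(t,\infty)=2p/t$ and first-height density $t/m^2$ on $(t,\infty)$, one gets
\[
P\bigl(L_\nu(t)=L_\nu(t+s)\bigr) \;=\; \int_{t+s}^\infty \frac{t}{m^2}\,dm \;=\; \frac{t}{t+s},
\]
which is the required limit.

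The principal obstacle is step (ii) of the deep-trap reduction: the Gaussian Hamiltonian has covariance $nR_n^p$, inducing strong correlation of $\tau_n$ between Hamming-close sites so that neighbours of a deep trap are themselves biased towards being deep. Ruling out clusters of deep traps of diameter $\leq \varepsilon n/2$ requires Gaussian comparison estimates combined with a union bound over the polynomially-many deep traps visited on time scale $c_n$; these bounds are $p$-dependent and parallel the second-moment computations driving Theorem \ref{p:main}, and are responsible for the $p$-dependent mode of convergence. The remaining ingredients -- the mass-concentration claim (i), the continuous-mapping argument for the first-passage functional, and the final integral -- are essentially routine once (ii) is in place.
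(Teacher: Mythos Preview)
Your approach is correct and is essentially the same as the paper's, differing mainly in presentation. The paper's proof is very terse: it cites \cite{BG10}, Theorem~1.5, to reduce $\CC_n^\varepsilon(t,s)$ directly to $\PP_{\pi_n}(\mathcal R_{\alpha_n}\cap(t,t+s)=\emptyset)$, where $\mathcal R_{\alpha_n}$ is the range of $(S_n^b)^{\alpha_n}$; this range-avoidance event is exactly your constancy event $\{L_n(t)=L_n(t+s)\}$ for the first-passage inverse. Your deep-trap reduction (i)--(ii) is a spelled-out version of what \cite{BG10} provides as a black box, and you correctly identify the separation of deep traps as the only substantive ingredient.

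For the limit, the paper uses a slightly different but equivalent computation: it invokes Resnick's result that the range of $M_\nu$ is itself a Poisson point process on $(0,\infty)$ with intensity $dm/m$ (this follows from thinning the underlying PRM to its records and integrating out the time coordinate), and then reads off
\[
\PP(\mathcal R_{M_\nu}\cap(t,t+s)=\emptyset)=\exp\Bigl(-\int_t^{t+s}\frac{dm}{m}\Bigr)=\frac{t}{t+s}.
\]
Your overshoot computation via the first-height density $t/m^2$ gives the same answer and is equally clean.
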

Theorem \ref{p:corr} establishes extremal ageing as defined in \cite{BAGun11}. Here, de-correlation takes place on time intervals of the form $[t^{1/\alpha_n}, (t+s)^{1/\alpha_n}]$, while in normal ageing it takes place on time intervals of the form $[t,t+s]$.

The remainder of the paper is organized as follows. We prove the results of Section \ref{S11} in Section \ref{S2}. 
Section \ref{S3} is devoted to the proofs of the statements of Section \ref{S12}. Finally, an additional lemma is proven in the Appendix.

\section{Proofs of the main Theorems} \label{S2}
Now we come to the proofs of the theorems of Section \ref{S11}. 
The proof of Theorem \ref{general} 
hinges on the property
that extremal processes can be constructed from Poisson point processes. Namely, if $\xi'=\sum_{k\in\N}\delta_{\{t'_k,x'_k\}}$ is a Poisson point process on $(0,\infty)\times(0,\infty)$ with mean measure $dt \times d\nu'$, where $\nu'$ is a $\s$-finite measure such that $\nu'(0,\infty)=\infty$, then
\be\label{2.1}
M(t)\equiv \sup\{x'_k: \ t'_k \leq t\}, \quad t>0,
\ee
is an extremal process with $1$-dimensional marginal
\be
F_t(u)=e^{-t\nu'(u,\infty)}.
\ee
(See e.g. \cite{Re}, Chapter 4.3.).
This was used in \cite{DR78} to derive convergence of maxima of random variables to extremal processes
from an underlying Poisson point process convergence.  
Our proof exploits similar ideas and the key fact that the $1/\alpha_n$-norm converges to the sup norm as $\alpha_n\downarrow 0$.

\begin{proof}[Proof of Theorem \ref{general}]  Consider the sequence of point processes defined on $(0,\infty)\times(0,\infty)$ through
\be\label{1.3a}
\xi_n\equiv \sum_{k\in \N} \delta_{\left\{k/a_n, Z_{n,k}^{\alpha_n}\right\}} .
\ee
By Theorem 3.1 of \cite{DR78}, Conditions  \eqref{1.4} and \eqref{1.5} immediately imply  that
$\xi_n\stackrel{n\rightarrow\infty}{\Rightarrow} \xi$, where  $\xi$ is a Poisson point process with intensity measure $dt\times d\nu$.

The remainder of the proof can be summarized as follows. In the first step we construct $(S_n(t))^{\alpha_n}$ from $\xi_n$ by taking the $\alpha_n^{th}$ power of the sum over all points $Z_{n,k}$ up 
to time $\lfloor a_n t\rfloor$. To this end we introduce a truncation threshold $\delta$ and split the ordinates of $\xi_n$ into 
\be\label{truncation}
Z_{n,k}^{\alpha_n} = Z_{n,k}^{\alpha_n}\1_{Z_{n,k}^{\alpha_n}\leq \delta}+ Z_{n,k}^{\alpha_n}\1_{Z_{n,k}^{\alpha_n}> \delta}.
\ee
Applying a summation mapping to $ Z_{n,k}^{\alpha_n}\1_{Z_{n,k}^{\alpha_n}> \delta}$, we show that the resulting process converges to the supremum 
mapping of a truncated version of $\xi$. 
More precisely, let $\delta>0$. Denote by ${\cal M}_p$ the space of point measures on $(0,\infty)\times(0,\infty)$. For $n \in \N$ let $T_n^{\delta}$ be the functional on ${\cal M}_p$, whose value at $m= \sum_{k \in \N} \delta_{\{ t_k, j_k\}}$ is 
\be\label{tn}
 (T_n^{\delta} m)(t)=\textstyle\left(\sum_{t_k \leq t} j_k^{1/\alpha_n} \1_{\left\{j_k>\delta\right\}}\right)^{\alpha_n}, \quad t>0.
\ee
Let $T^{\delta}$ be the functional on ${\cal M}_p$ given by
\be\label{t}
(T^{\delta} m)(t)= \sup\left\{j_k \1_{\left\{j_k>\delta\right\}}: t_k\leq t\right\}, \quad t>0\ .
\ee
We show that $T_n^{\delta} \xi_n \limj T^{\delta} \xi$ as  $n\rightarrow \infty$.

In the second step we prove that the small terms, as $\delta\rightarrow 0$ and $n\rightarrow\infty$, do not contribute to $(S_n)^{\alpha_n}$, i.e.
that for $\varepsilon>0$
\be \label{tightness}
\lim_{\delta \rightarrow 0} \limsup_{n \rightarrow \infty} \PP\left(\rho_{\infty}\left(T_n^{\delta} \xi_n ,S_n^{\alpha_n}\right)>\varepsilon\right)=0 \ ,
\ee
where $\rho_{\infty}$ denotes the Skorokhod metric on $D([0,\infty))$. Moreover, observe that $T^{\delta} \xi \limj M$ as $\delta\rightarrow 0$. Then, by Theorem 4.2 from \cite{Bill68}, the assertion of Theorem \ref{general} follows.

\noindent\emph{Step $1$:} To prove that $T_n^{\delta} \xi_n \limj T^{\delta} \xi$ as  $n\rightarrow \infty$ we use a continuous mapping theorem, namely Theorem 5.5 from \cite{Bill68}. Since the mappings $T_n^{\delta}$ and $T^{\delta}$ are measurable, it is sufficient to show that the  set
\be \label{nulset}
{\cal E}=\left\{m \in {\cal M}_p: \exists\ \left(m_n\right)_{n\in\N} \mbox{ s.t. } m_n \stackrel{v}{\rightarrow} m,
 \ \mbox{ but } \ T_n^{\delta} m_n \ \cancel{ \limj }\ T^{\delta} m\right\},
\ee
where $\stackrel{v}{\rightarrow}$ denotes vague convergence in ${\cal M}_p$,  is a null set with respect to the distribution of $\xi$.
For the Poisson point process $\xi$ it is enough to show that $\PP_{\xi}\left({\cal E}^c \cap {\cal D}\right)=1$, where
\be
{\cal D}\equiv\left\{m\in {\cal M}_p : m\left(\left(0,t\right]\times \left[j,\infty\right)\right)<\infty \ \forall t, j >0\right\}.
\ee
Let  ${\cal C}_{T^{\delta}}\equiv\left\{t >0 : \ \PP_{\xi}\left(\left\{m : \ T^{\delta} m \left(t\right)=T^{\delta} m\left(t-\right)\right\}\right)=1\right\}$ be the set of continuity points of $\xi$.
By definition of the Skorokhod metric, we consider $m \in {\cal D}$, $a, b \in {\cal C}_{T^{\delta}}$, and $\left(m_n\right)_{n \in\N}$ such that $m_n \stackrel{v}{\rightarrow} m$ and show that
\be
\lim_{n\rightarrow\infty}\rho_{\left[a,b\right]}\left(T_n^{\delta} m_n, T^{\delta} m\right) =0 \ ,
\ee
where $\rho_{\left[a,b\right]}$ denotes the Skorokhod metric on $\left[a,b\right]$. Since $m \in {\cal D}$, there exist continuity points $x,y$ of $m$ such that $m((a,b)\times(\delta,\infty))= m((a,b)\times(x,y))<\infty$. Then, Lemma 2.1 from \cite{MO76} yields that $m_n$ also has this property for large enough $n$. Moreover, the points of $m_n$ in $(a,b)\times(x,y)$ converge to the ones of $m$ (cf. Lemma I.14 in \cite{Nev76}). Finally, we use that $\alpha_n \downarrow 0$ as $n \rightarrow \infty$ and thus $T_n^{\delta}$ can be viewed as the $1/\alpha_n$-norm, which converges as $n\rightarrow \infty$ to the sup-norm $T^{\delta}$. Therefore, $T_n^{\delta} \xi_n \limj T^{\delta} \xi$ as $n\rightarrow \infty$.


\noindent\emph{Step $2$:} We prove \eqref{tightness} by showing that the assertion holds true for the Skorokhod metric on $D([0,k])$ for every $k \in \N$. Assume without loss of generality that $k=1$. Let $\varepsilon>0$. We have that
\bea\label{1.13}
&\PP&\textstyle{\left(\sup_{0 \leq t \leq 1}\left|T_n^{\delta} \xi_n \left(t\right) - S_n^{\alpha_n}\left(t\right)\right| > \varepsilon\right) \nonumber}\\
&=& \textstyle{\PP\left(\sup_{0 \leq t \leq 1}\left|\left(\sum_{i=1}^{\lfloor a_n t\rfloor} Z_{n,i} \1_{Z_{n,i} > \delta^{1/\alpha_n}}\right)^{\alpha_n} - 
\left(\sum_{i=1}^{\lfloor a_n t\rfloor} Z_{n,i} \right)^{\alpha_n}\right| > \varepsilon\right)} \ .
\eea 
Since for $n$ large enough $\alpha_n<1$, we know by Jensen inequality that 
\be\label{1.14}
\textstyle{\left|\left(\sum_{i=1}^{\lfloor a_n t\rfloor} Z_{n,i} \1_{Z_{n,i} > \delta^{1/\alpha_n}}\right)^{\alpha_n} - 
\left(\sum_{i=1}^{\lfloor a_n t\rfloor} Z_{n,i} \right)^{\alpha_n}\right| \leq \left|\sum_{i=1}^{\lfloor a_n t\rfloor} Z_{n,i}\1_{Z_{n,i} \leq \delta^{1/\alpha_n}}\right|^{\alpha_n} }\ ,
\ee 
and therefore
\be\label{1.14a}
\eqref{1.13}\leq\textstyle{\PP\left(\sup_{0 \leq t \leq 1}\left|\sum_{i=1}^{\lfloor a_n t\rfloor} Z_{n,i}\1_{Z_{n,i} \leq \delta^{1/\alpha_n}}\right|^{\alpha_n} > \varepsilon\right)\ .
}
\ee
All summands are non-negative. Hence the supremum is attained for $t=1$. Applying a first order Chebychev and Jensen inequality, we obtain that \eqref{1.14a} is bounded above by
\be\label{1.15}
\textstyle{ \e^{-1} \left(\sum_{i=1}^{a_n} \EE \1_{Z_{n,i}\leq \delta^{1/ \alpha_n}} Z_{n,i}\right)^{\alpha_n}
= \frac{\delta}{\e} \left(\sum_{i=1}^{a_n} \EE \1_{Z_{n,i}  \leq \delta^{1/\alpha_n}} \delta^{-1/ \alpha_n} Z_{n,i} \right)^{\alpha_n} \ .}
\ee
By \eqref{1.6} the sum is bounded in $n$ and hence, as $\delta \rightarrow 0$, \eqref{1.15} tends to zero. This concludes the proof of Theorem \ref{general}.
\end{proof}


\begin{proof}[Proof of Theorem \ref{thmblock}]
Throughout we fix a realisation $\o\in\O$ of the random environment but do not make this explicit in the notation. 
We set
\be
\wh S^b_n(t)\equiv S_n^b(t)-
c_n^{-1}\lambda_n^{-1}(J_n(0))e_{n,0} , \ \quad t>0.
\Eq(1.14b)
\ee
$(S_n^b(t))^{\alpha_n} $ differs from $(\wh{S}_n^b(t))^{\alpha_n}$ by one term. All terms in $(S_n^b(t))^{\alpha_n} $ are non-negative and therefore we conclude by Jensen inequality that, for $n$ large enough,
\be\label{1.14ba}
\wh{S}_n^b(t)^{\alpha_n} \leq S_n^b(t)^{\alpha_n} \leq \wh{S}_n^b(t)^{\alpha_n} + \left(c_n^{-1}\lambda^{-1}_n(J_n(0))e_{n,0}\right)^{\alpha_n} \ .
\ee
By Condition (0) the contribution of the term $\left(c_n^{-1}\lambda^{-1}_n(J_n(0))e_{n,0}\right)^{\alpha_n}$ is negligible. Thus we must show that  under Conditions (1)-(3), $(\wh S^b_n)^{\alpha_n}\limj  M_\nu$.
Recall that $k_n(t)\equiv\lf \lf a_n t\rf/\theta_n\rf $ and that for $i\geq 1$,
\be
\textstyle{Z_{n,i} \equiv \sum_{j=\theta_n(i-1)+1}^{\theta_ni}c_n^{-1}\l_n^{-1}(J_n(j))e_{n,j} .}
\Eq(1.14c)
\ee
We apply Theorem \thv(general)  to the $Z_{n,i}$'s.
It is shown in the proof of Theorem 1.2 in \cite{BG10} that Conditions (1) and (2) imply \eqref{1.4} and \eqref{1.5}. It remains to prove that Condition (3) yields \eqref{1.6}. Note that for all $i\geq 1$ and all $(i-1)\theta_n+1 \leq j \leq i \theta_n$,
\be\label{1.14d}
\textstyle{\1_{\{\sum_{j=(i-1)\theta_n+1}^{i \theta_n} \lambda^{-1}_n(J_n(j)) e_{n,j} \leq c_n \delta^{1/\alpha_n}\}} \leq 
\1_{\{\lambda^{-1}_n(J_n(j)) e_{n,j} \leq c_n \delta^{1/\alpha_n}\}}}\ .
\ee
Using \eqref{1.14d}, we observe that \eqref{1.6} is in particular satisfied if for all $\delta>0$ and $t>0$
\be\Eq(1.14ca)
\limsup_{n\rightarrow \infty}\Bigl(\sum_{i=1}^{\lf a_n t\rf}  \EE_{\mu_n} \1_{\{\lambda^{-1}_n(J_n(j)) e_{n,j} \leq c_n \delta^{1/\alpha_n}\}}\delta^{-1/\alpha_n}c_n^{-1}\l_n^{-1}(J_n(j))e_{n,j} \Bigr)^{\alpha_n}<\infty\ ,
\ee
which is nothing but Condition (3). This concludes the proof of Theorem \ref{thmblock}.
\end{proof}

Finally, having Theorem \ref{thmblock} and the results from \cite{BG10}, Theorem \ref{thmblock1} is deduced readily.

\begin{proof}[Proof of Theorem \ref{thmblock1}]
Let $\mu_n$ be the invariant measure $\pi_n$ of the jump chain $J_n$. By Proposition 2.1 of \cite{BG10} we know that Conditions (0), (1-1), and (2-1)
 imply Conditions (0)-(2) of Theorem \ref{thmblock}. Moreover, since $\mu_n=\pi_n$, Condition (3-1) is Condition (3). Thus, the conditions of 
Theorem \ref{thmblock} are satisfied under the assumptions of Theorem \ref{thmblock1} and this yields the claim.
\end{proof}

\section{Application to the $p$ spin SK model} \label{S3}
This section is devoted to the proof of Theorem \ref{p:main}. We show that the conditions of Theorem \ref{thmblock1} are satisfied for the particular
 choices of the sequences $a_n$, $c_n$, $\theta_n$, and $\alpha_n$.

The following lemma from \cite{G10b} (Proposition 3.1) implies that Condition (1-1) holds true for $\theta_n = 3 n^2$.
\begin{lemma}\label{lemma2.1}
Let $P_{\pi_n}$ be the law of the simple random walk on $\S_n$ started in the uniform distribution. Let $\theta_n=3 n^2$. Then, for any $x,y \in \S_n$, and any $i\geq 0$,
\be\label{3.1}
\left| \sum_{k=0}^1P_{\pi_n}\left(J_n(\th_n+i+k)=y,J_n(0)=x\right)
-2\pi_n(x)\pi_n(y)\right|\leq 2^{-3n+1}.
\ee
\end{lemma}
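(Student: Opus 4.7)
The plan is to exploit the fact that the simple random walk on $\S_n=\{-1,1\}^n$ is diagonalized by the characters of $\Z_2^n$, and to use the summation over $k=0,1$ to kill the contribution of the eigenvalue $-1$ coming from the walk's period-$2$ parity.

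First I would write the Fourier/spectral decomposition of the $t$-step transition probabilities. For $S\subseteq\{1,\dots,n\}$, set $\chi_S(x)=\prod_{i\in S}x_i$; these are the characters of $\Z_2^n$, orthonormal in $\ell^2(\S_n,\pi_n)$, and they diagonalize the one-step kernel with eigenvalue $\lambda_S=1-2|S|/n$. Consequently,
\be
P_{\pi_n}\!\left(J_n(t)=y,J_n(0)=x\right)=\pi_n(x)\,p_n^{(t)}(x,y)=2^{-2n}\sum_{S\subseteq\{1,\dots,n\}}\lambda_S^{\,t}\,\chi_S(x)\chi_S(y)\ .
\ee
The term $S=\emptyset$ contributes exactly $2^{-2n}=\pi_n(x)\pi_n(y)$, which is what we want to match after summing $k=0,1$ and dividing by $2$.

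Next I would handle the ``bad'' eigenvalue $\lambda_{\{1,\dots,n\}}=-1$, which is the only obstruction to direct mixing. Summing over $k\in\{0,1\}$ gives the factor $(-1)^{\theta_n+i}+(-1)^{\theta_n+i+1}=0$, so the $|S|=n$ contribution vanishes identically. Thus
\be
\sum_{k=0}^{1}P_{\pi_n}\!\left(J_n(\theta_n+i+k)=y,J_n(0)=x\right)-2\pi_n(x)\pi_n(y)
= 2^{-2n}\!\!\sum_{1\leq |S|\leq n-1}\!\!\lambda_S^{\,\theta_n+i}\bigl(1+\lambda_S\bigr)\chi_S(x)\chi_S(y).
\ee

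The remaining step is a standard estimate. For $1\leq|S|\leq n-1$ one has $|\lambda_S|\leq 1-2/n$ and $|1+\lambda_S|\leq 2$, while $|\chi_S(x)\chi_S(y)|=1$ and there are at most $2^n$ such $S$. Therefore the absolute value of the right-hand side is bounded by
\be
2^{-2n}\cdot 2\cdot 2^{n}\cdot (1-2/n)^{\theta_n+i}\leq 2^{1-n}(1-2/n)^{3n^2}.
\ee
Since $(1-2/n)^{3n^2}\leq e^{-6n}$ and $e^{-6n}\leq 2^{-2n}$ (because $e^{-6}<2^{-2}$), this is bounded by $2^{1-n}\cdot 2^{-2n}=2^{-3n+1}$, which is the claimed inequality.

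There is no real obstacle here; the only subtlety to be careful with is the bookkeeping of the $|S|=n$ eigenvalue: it is precisely the period-$2$ feature of the walk, and summing over two consecutive time steps is exactly what is needed to annihilate it. All other eigenvalues are strictly smaller than $1-2/n$ in absolute value, and the choice $\theta_n=3n^2$ is taken so that $(1-2/n)^{\theta_n}$ dominates the combinatorial factor $2^n$ with room to spare.
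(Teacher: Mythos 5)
Your proof is correct. Note that in the paper this lemma is not proved at all: it is simply cited as Proposition 3.1 of \cite{G10b}. Your argument — diagonalizing the hypercube walk via the characters $\chi_S$, observing that the $S=\emptyset$ term reproduces $2\pi_n(x)\pi_n(y)$ after summing over $k\in\{0,1\}$, that the parity eigenvalue $\lambda_{\{1,\dots,n\}}=-1$ is annihilated by the same two-term sum, and that all remaining eigenvalues satisfy $|\lambda_S|\leq 1-2/n$ so that $(1-2/n)^{3n^2}\leq e^{-6n}\leq 2^{-2n}$ beats the entropy factor $2^n$ — is the standard spectral route to this mixing estimate and is complete. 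The numerics check out: the bound you obtain, $2^{1-n}e^{-6n}$, is in fact considerably smaller than the stated $2^{-3n+1}$, so the inequality holds with room to spare, exactly as you say.
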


The proof of Condition (2-1) comes in three parts. We first show that $\E\nu_n^t(u,\infty)$ converges to $t\nu(u,\infty)$. 
Next we prove that $\P$-almost surely, respectively in $\P$-probability, the limit of $\nu_n^t(u,\infty)$ concentrates  for all $u>0$ and 
all $t>0$ around its expectation. Lastly we verify that the second part of Condition (2-1) is satisfied in the same convergence mode with respect to the random environment.

\subsection{Convergence of $\E\nu_n^t(u,\infty)$.} \label{S31}
\begin{proposition}\label{prop2.1a}
For all $u>0$ and $t>0$
\be\label{3.2}
\lim_{n\rightarrow \infty} \E \nu_n^t(u,\infty)= \nu^t (u, \infty)\equiv K_p t u^{-1} \ .
\ee
\end{proposition}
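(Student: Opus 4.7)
The plan is to exploit translation invariance of the Gaussian field $H_n$ to reduce the expectation to a single joint-law tail probability, and then to pin down its asymptotics via Mills' ratio together with a Palm-type analysis of correlations along the walk. Because the covariance $\E H_n(x)H_n(y)=n R_n(x,y)^p$ depends only on the Hamming distance $\dist(x,y)$, the law of $H_n$ is invariant under the coordinate-wise action of $\Z_2^n$ on $\S_n$, and hence $\E Q_n^u(x)$ does not depend on $x$. Combining this with Fubini I would write
\[
\E\nu_n^t(u,\infty) = k_n(t)\,\widetilde{P}\!\left(\sum_{j=1}^{\theta_n}\tau_n(J_n(j))\,e_{n,j} > c_n u^{1/\alpha_n}\right),
\]
where $\widetilde{P}$ is the joint law of environment, SRW started from any fixed vertex (equivalently from $\pi_n$), and the exponential clocks. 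Since $k_n(t)\sim a_n t/\theta_n$, it will suffice to prove that $(a_n/\theta_n)\,\widetilde{P}(\,\cdots\,)\to K_p\,u^{-1}$.

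For the single-site tail of $\tau_n(x_\star)=e^{\beta_n H_n(x_\star)}$, one has $\log(c_n u^{1/\alpha_n})=\beta_n(\gamma_n n + \log u/\gamma_n)$; since $\gamma_n\sqrt n=n^{1/2-c}\to\infty$ by the assumption $c<1/2$, Mills' ratio applied to $H_n(x_\star)/\sqrt n\sim\mathcal N(0,1)$ should yield
\[
\P\bigl(\tau_n(x_\star)>c_n u^{1/\alpha_n}\bigr) = (1+o(1))\,\frac{u^{-1}}{\gamma_n\sqrt{2\pi n}}\,e^{-\gamma_n^2 n/2},
\]
from which $a_n\P(\tau_n(x_\star)>c_n u^{1/\alpha_n}) = (1+o(1))\gamma_n^{-2}u^{-1}$. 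That this diverges signals that the $\theta_n$ random variables $\tau_n(J_n(j))$ must be treated as strongly dependent; the naive union bound $\theta_n\P$ is far from sharp. The $\mathrm{Exp}(1)$ clocks will contribute only a multiplicative factor $\Gamma(1+\alpha_n)\to 1$, verified from $\P(\tau e>T)=\E e^{-T/\tau}$ via Laplace's method.

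The constant $K_p=2p$ will emerge through Palm/peaks-over-threshold accounting. For $m\ll n$ the walk typically satisfies $\dist(J_n(j),J_n(j+m))=m$, hence $\rho_m=R_n^p\approx 1-2pm/n$. Conditioning on an exceedance $H_n(J_n(j_0))=\gamma_n n(1+o(1))$, the conditional probability that $H_n(J_n(j_0+m))$ also exceeds the threshold becomes $\P(\mathcal N(0,1)>\gamma_n\sqrt{pm}\,(1+o(1)))$; summing over $m\in\Z$ gives a size-biased cluster mean $\sim 1/(p\gamma_n^2)$. A Palm-inversion argument will then identify the expected number of exceedance clusters per block as $\sim 2p\gamma_n^2\,\theta_n\,\P(\tau_n>c_n u^{1/\alpha_n})$, from which $\widetilde{P}(\,\cdots\,)\sim 2p\,\gamma_n^2\,\theta_n\,\P(\tau_n>c_n u^{1/\alpha_n})$; multiplying by $k_n(t)$ yields the target $2p\,t\,u^{-1}$.

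The main obstacle will be executing this Palm argument rigorously in the correlated, non-i.i.d.\ setting. One needs (i) to restrict to the typical event that the walk does not revisit sites inside a block (revisit probability $O(\theta_n^2/2^n)$, negligible), (ii) to replace the heuristic $\dist(J_n(j),J_n(j+m))\approx m$ by sharp estimates valid uniformly for $m$ up to the correlation length $1/(p\gamma_n^2)$, (iii) to control blocks containing two or more separated clusters, and (iv) to justify interchanging the conditional Gaussian computations with joint expectations. The parameter choices $\theta_n=3n^2$, $c<1/2$, and $\gamma_n\beta_n=O(1)$ will respectively ensure $1/(p\gamma_n^2)\ll\theta_n\ll 2^n$, that Mills' ratio is applicable, and that $\alpha_n\to 0$ at the right rate.
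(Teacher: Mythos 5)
Your heuristic is sound at every step: translation invariance does reduce $\E\nu_n^t(u,\infty)$ to $k_n(t)$ times a single joint-law tail; Mills' ratio does give $a_n\P(\tau_n(x)>c_nu^{1/\alpha_n})\sim\gamma_n^{-2}u^{-1}$; the exponential clocks do contribute only a $\Gamma(1+\alpha_n)\to1$ factor; and a peaks-over-threshold count with cluster size $\sim 1/(p\gamma_n^2)$ does explain why $K_p=2p$. But what you have written is a plan, not a proof. You say yourself in the last paragraph that ``the main obstacle will be executing this Palm argument rigorously'' and then list four open sub-problems. Those four are exactly where the content of the proposition lives, and none of them is carried out.

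They are also not how the paper proceeds; the actual route sidesteps most of your obstacles. First, instead of attacking the block sum directly, the paper sandwiches the sum between two block-\emph{maximum} probabilities,
\[
\bar\nu_n^t(u,\infty)\;\le\;\nu_n^t(u,\infty)\;\le\;\bar\nu_n^t\bigl(u\,\theta_n^{-\alpha_n},\infty\bigr),
\]
where $\bar\nu_n^t$ is defined with $\max_{i\le\theta_n}\lambda_n^{-1}(J_n(i))e_{n,i}$ in place of the sum; the $\theta_n$ factor disappears because $\theta_n^{-\alpha_n}\to1$, a feature specific to the $\alpha_n\downarrow0$ regime and entirely absent from your plan. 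Second, the dependence of $H_n$ along the walk is not handled by Palm inversion at all, but by a Gaussian comparison lemma (an adaptation of Theorem 4.2.1 of Leadbetter--Lindgren--Rootz\'en) between $H^0(i)=n^{-1/2}H_n(J_n(i))$ and a surrogate field $H^1$ made of \emph{independent} blocks of length $v_n\sim n^\omega$ with linear covariance $1-2p|i-j|/n$ inside a block; the per-block limit for $H^1$ is then imported from Ben Arous--G\"un. Third --- and this is the piece your proposal does not hint at --- the raw comparison error is not summable, and one must first Bernoulli-dilute the index set to density $\gamma_n^2\rho_n$, prove the result on the diluted set, and then undo the dilution. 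Without some substitute for this step, your items (ii)--(iv) will not close.

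Two of your steps would also not survive as stated. Your cluster computation assumes $\dist(J_n(j),J_n(j+m))=m$ for all $m$ up to the correlation length $1/(p\gamma_n^2)=n^{2c}/p$, but for $c\ge\tfrac14$ this exceeds $\sqrt n$, and on the hypercube coordinate revisits are no longer rare there; the approximation $\rho_m\approx 1-2pm/n$ needs quantitative control of the distance-process fluctuations under $E_{\pi_n}$, not just the ``typical'' value. And controlling blocks with two or more separated clusters --- your (iii) --- is a genuine second-moment estimate, not an afterthought, since $1/(p\gamma_n^2)\ll\theta_n$; in the paper this work is done separately (it is exactly what the variance quantity $(\sigma_n^t)^2$ is for). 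In short: right intuition, right constant, right one-site asymptotics, but the reduction to the max, the Gaussian comparison, and the dilution --- the three lemmas that constitute the proof --- are missing.
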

The proof of Proposition \ref{prop2.1a} centers on the following key proposition.
\begin{proposition}\label{prop2.2}
Let for $t>0$ and an arbitrary sequence $u_n$,
\be\label{3.3}
\bar{\nu}_n^t(u_n,\infty) = k_n(t)\ \PP_{\pi_n}\Bigl(\max_{i=1,\ldots,\theta_n} \lambda^{-1}_n(J_n(i)) e_{n,i}> u_n^{1/\alpha_n}c_n\Bigr) \ .
\ee
Then, for all $u>0$ and $t>0$,
\be\label{3.5}
\lim_{n\rightarrow\infty}\E\ \bar{\nu}_n^t(u,\infty) = \nu^t(u,\infty) \ .
\ee
The same holds true when $u$ is replaced by $u_n=u\ \theta_n^{-\alpha_n}$.
\end{proposition}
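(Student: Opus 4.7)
The plan is to compute $\E\bar\nu_n^t(u,\infty)=k_n(t)\,\E\,\PP_{\pi_n}\bigl(\max_{1\leq i\leq\theta_n}\tau_n(J_n(i))e_{n,i}>T\bigr)$ with $T\equiv u^{1/\alpha_n}c_n$, by combining Gaussian tail asymptotics along the walk with a correlation analysis that captures the cluster structure of exceedances.

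Conditioning on the chain $J_n$ and the environment $H_n$, and integrating out the independent $e_{n,i}$'s, one has
\[
\PP_{\pi_n}\bigl(\max_i\tau_n(J_n(i))e_{n,i}>T\mid J_n,H_n\bigr)
=1-\prod_{i=1}^{\theta_n}\bigl(1-\exp(-Te^{-\beta_n H_n(J_n(i))})\bigr).
\]
Averaging over the chain (started from $\pi_n$) and then over the environment reduces the problem to an integral against the joint Gaussian law of $(H_n(J_n(i)))_{i=1,\ldots,\theta_n}$, whose covariance is $nR_n(J_n(i),J_n(j))^p$. The first-order term from the product expansion is $\theta_n q_n$, where by stationarity of $J_n$ under $\pi_n$ and translation invariance of $H_n$, $q_n=\E[\exp(-Te^{-\beta_n\sqrt n Z})]$ with $Z\sim\mathcal N(0,1)$. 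A Laplace expansion via $w=Te^{-\beta_n\sqrt n z}$ produces a $\Gamma(\alpha_n)$ integral; using $\Gamma(\alpha_n)\sim 1/\alpha_n=\beta_n/\gamma_n$ together with the Gaussian factor $e^{-z_*^2/2}=e^{-\gamma_n^2 n/2}u^{-1}(1+o(1))$, where $z_*=\log T/(\beta_n\sqrt n)$, gives $q_n\sim e^{-\gamma_n^2 n/2}u^{-1}/(\gamma_n\sqrt{2\pi n})$. Multiplying by $k_n(t)\theta_n\sim a_n t$ yields $a_n t q_n\sim tu^{-1}/\gamma_n^2$, which \emph{diverges}.

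Hence the naive union bound overshoots, and the main work is to exploit the correlations to produce the cancellation that restores a finite limit. I would analyze the Bonferroni correction $\sum_{i<j}\E\PP_{\pi_n}(A_i\cap A_j)$ with $A_i=\{\tau_n(J_n(i))e_{n,i}>T\}$, using the joint Gaussian law of $(H_n(J_n(i)),H_n(J_n(j)))$. For close walk times, $\dist(J_n(i),J_n(j))$ is typically small and the covariance $nR_n^p\approx n-2p\,\dist(J_n(i),J_n(j))$ is close to $n$, so that conditional on an exceedance at time $i$, the variable $H_n(J_n(j))$ is Gaussian with mean close to $H_n(J_n(i))$ and variance $\approx 4p\,\dist(J_n(i),J_n(j))$. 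This yields an effective cluster length in walk time of order $1/(p\gamma_n^2)$ and an extremal index of order $p\gamma_n^2$, which reduces $\PP(\max)$ by exactly the right factor to obtain $k_n(t)\PP(\max)\to K_p tu^{-1}=2ptu^{-1}$: the factor $p$ emerges from the local scaling of the Hamiltonian covariance, while the constant $2$ arises from a careful evaluation of the Gaussian integral over the fluctuation manifold of the cluster. The extension from $u$ to $u_n=u\theta_n^{-\alpha_n}$ is immediate because $\theta_n^{\alpha_n}=\exp(\alpha_n\log(3n^2))\to 1$ under the hypotheses $\gamma_n=n^{-c}$ with $c\in(0,1/2)$ and $\gamma_n\beta_n\leq O(1)$. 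The main obstacle is the precise extremal-index calculation, which demands uniform Gaussian-tail estimates conditioned on the random distance traveled by the SRW together with careful tracking of second-order corrections in the inclusion-exclusion expansion.
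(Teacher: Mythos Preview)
Your proposal correctly identifies the central phenomenon --- the naive union bound $\theta_n q_n$ diverges like $\gamma_n^{-2}$, and the correct limit emerges only after accounting for clustering of exceedances along the walk, with an effective extremal index of order $\gamma_n^2$ --- but it leaves the decisive step as an acknowledged ``main obstacle.'' A direct Bonferroni expansion on the process $H^0(i)=n^{-1/2}H_n(J_n(i))$ is genuinely hard: the pairwise Gaussian covariance $R_n(J_n(i),J_n(j))^p$ is a random functional of the walk, so the two-point (and higher) inclusion-exclusion terms mix the Gaussian tail analysis with the law of the Ehrenfest distance process in a way you have not controlled. You would in effect be redoing the core computation of Ben Arous--G\"un for a process whose covariance is itself random, and your sketch gives no mechanism for the required uniformity.

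The paper sidesteps this entirely by a Gaussian comparison argument. It introduces an auxiliary centered Gaussian process $H^1$ with deterministic, block-diagonal covariance $\Delta^1_{ij}=1-2p|i-j|/n$ on blocks of length $v_n\sim n^\omega$ (and zero across blocks). Because blocks are i.i.d., the single-block computation $a_n v_n^{-1}F_n(u,H^1,[v_n])\to K_p u^{-1}$ can be quoted directly from \cite{BAGun11}; a short Fubini argument then handles the exponential clocks $e_{n,i}$ (Lemma~\ref{lemma2.3}). The passage from $H^1$ back to $H^0$ is done via a normal comparison inequality (Lemma~\ref{lemma2.5}), bounding $|\E G_n(u,H^0,[\theta_n])-\E G_n(u,H^1,[\theta_n])|$ by a sum over covariance differences; a Bernoulli thinning (Lemma~\ref{lemma2.4}) supplies the matching lower bound. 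Thus the paper never computes an extremal index directly: the clustering is absorbed into the block process $H^1$, for which the answer is already known. Your route could in principle be made to work, but as written it is a heuristic with the hard analysis deferred; the comparison-to-$H^1$ strategy is what makes the argument go through.
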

\begin{proof}[Proof of Proposition \ref{prop2.1a}]
By definition, $\nu_n^t(u,\infty)$ is given by
\be\label{3.5a}
\nu_n^t(u,\infty)=k_n(t)\ \PP_{\pi_n}\Bigl(\sum_{i=1}^{\theta_n} \lambda^{-1}_n(J_n(i)) e_{n,i}> u^{1/\alpha_n}c_n\Bigr) .
\ee
The assertion of Proposition \ref{prop2.1a} is then deduced from Proposition \ref{prop2.2} using the upper and lower bounds
\be
\bar{\nu}_n^t(u,\infty) \leq \nu_n^t(u,\infty) \leq \bar{\nu}_n^t(u \theta_n^{-\alpha_n},\infty)  \ .
\label{3.4}
\ee
\end{proof}
The proof of Proposition \ref{prop2.2}, which is postponed to the end of this section, relies on three Lemmata. 
In Lemma \ref{lemma2.3} we show that \eqref{3.5} holds true if we replace the underlying Gaussian process by a simpler Gaussian process $H^1$. 
Lemma \ref{lemma2.4} yields 
\eqref{3.5}  for the maximum over a properly chosen random subset of indices of $H^1$. We use Lemma \ref{lemma2.6} to conclude the proof of Proposition \ref{prop2.2}. 

We start by introducing the Gaussian process $H^1$. Let $v_n$ be a sequence of integers, where each member is of order $n^{\omega}$ for $\omega \in \left(c+\frac 1 2, 1\right)$. Then, $H^1$ is a centered Gaussian process defined on the probability space $(\O,\FF, \P)$ with covariance structure
\bea\label{3.6}
\Delta^1_{i,j} = \begin{cases} 1-2pn^{-1}|i-j|,&\,\hbox{\rm if }\, \lf i/v_n\rf =\lf
j/v_n\rf,\\
0,&\,\hbox{\rm else}.
\end{cases}
\eea
For a given process $U=\{U_i,\ i\in \N\}$ on $(\O,\FF, \P)$ and an index set $I$ define
\be\label{3.7}
\textstyle{F_n(u_n, U,I)\equiv  \P\left(\max_{i\in I} e^{\sqrt{n}\b_n U_i}> u_n^{1/\alpha_n}c_n\right)},
\ee
and for a process $\widetilde U=\{\tilde U_i, \ i\in \N\}$ on $(\O,\FF,\P)$ that may also be dependent on $\FF^J$
\be\label{3.8}
\textstyle{G_n(u_n, \widetilde U,I)\equiv  \PP_{\pi_n} \left(\left.\max_{i\in I} e^{\sqrt{n}\b_n \widetilde U_i}e_{n,i}> u_n^{1/\alpha_n}c_n\right| \cal{F}^{J}\right)}\ .
\ee
\begin{lemma}\label{lemma2.3}
For all $u>0$ and $t>0$
\be\label{3.9}
\lim_{n\rightarrow \infty} k_n(t)\E G_n(u, H^1, [\theta_n])= \nu^t(u,\infty),
\ee
where $[k]\equiv\{1,\ldots, k\}$ for $k \in \N$. The same holds true when $u$ is replaced by $u_n =u\ \theta_n^{-\alpha_n}$.
\end{lemma}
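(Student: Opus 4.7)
The plan is to exploit conditional independence of the exponentials $e_{n,i}$ given $H^1$ together with the block-diagonal structure of $\Delta^1$ to reduce the problem to a tractable Gaussian integral. Since the $e_{n,i}$ are iid $\mathrm{Exp}(1)$ and independent of $H^1$ (and of $\mathcal{F}^J$),
\[
G_n(u, H^1, [\theta_n]) = 1 - \prod_{i=1}^{\theta_n}\bigl(1 - \exp(-c_n u^{1/\alpha_n}e^{-\sqrt{n}\beta_n H^1_i})\bigr).
\]
Taking the expectation over $H^1$, the block-independence of $\Delta^1$ factors $\E G_n$ across the $\theta_n/v_n$ blocks of length $v_n$; since each block's contribution is small, a first-order expansion of $1-\prod(1-p_j)$ reduces the problem to computing a per-block exceedance probability to leading order.

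For the per-block Gaussian integral I would use the substitution $\eta = c_n u^{1/\alpha_n}e^{-\sqrt{n}\beta_n h}$, which localises the integrand near $h = y_n(u) := \gamma_n\sqrt{n} + \log u/(\gamma_n\sqrt{n})$. Exploiting that $y_n(u)/(\sqrt{n}\beta_n) = \alpha_n + o(1)$, one expands the Gaussian density as $\phi(y_n(u) - s/(\sqrt{n}\beta_n)) \sim \phi(y_n(u))\,e^{\alpha_n s}$, so the one-dimensional integral becomes a Mellin transform evaluating to $\Gamma(\alpha_n) \sim \alpha_n^{-1} = \beta_n/\gamma_n$. Together with the Gaussian tail $\phi(y_n(u)) \sim (2\pi)^{-1/2}u^{-1}e^{-\gamma_n^2 n/2}$ and $a_n = \sqrt{2\pi n}\gamma_n^{-1}e^{\gamma_n^2 n/2}$, this produces the key identity $a_n\psi(y_n(u)) \sim u^{-1}\gamma_n^{-2}$.

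Assembling, a naive first-moment bound (ignoring intra-block correlations) would yield $k_n(t)\,\theta_n\,\P(X_1 > x) \sim t u^{-1}/\gamma_n^2$, which is divergent; the correct finite limit is recovered by dividing by the expected intra-block cluster size of exceedances. Using $1 - \Delta^1_{i,j} = 2p|i-j|/n$, one shows that conditional on a peak $H^1_{i^*} \approx y_n(u)$ the expected number of other $j \in B$ with $H^1_j > y_n(u)$ is $\sim 2\sum_{k \geq 1}\psi(y_n(u)\sqrt{pk/n})$, which via the continuous approximation and $\int_0^\infty w\psi(w)\,dw = 1/4$ evaluates to a cluster size of order $1/(p\gamma_n^2)$. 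Dividing the first-moment estimate by this cluster-size factor, and carefully tracking the contribution of the iid Gumbel-like perturbations $\epsilon_i = \log e_{n,i}/(\sqrt{n}\beta_n)$, produces the finite limit with the claimed constant $K_p = 2p$. The second statement, with $u$ replaced by $u_n = u\theta_n^{-\alpha_n}$, follows since $\alpha_n\log\theta_n = O(\gamma_n\log n/\beta_n) \to 0$, hence $u_n \to u$ and $y_n(u_n) - y_n(u) = o(1/y_n(u))$, so the tail $\psi$ changes only by a factor $1 + o(1)$.

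The main obstacle is the precise identification of $K_p = 2p$: the naive first-moment bound diverges due to the strong within-block correlations, while a coarse-graining that replaces each block by a single Gaussian undercounts by a factor of order $v_n\gamma_n^2$. Threading between these two extremes requires the Pickands-type local cluster analysis described above, in which the curvature constant $2p$ of the covariance at the diagonal directly enters the final answer.
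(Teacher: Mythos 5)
The paper's proof of Lemma~\ref{lemma2.3} does not re-derive the Gaussian asymptotics: it cites Proposition 2.1 of \cite{BAGun11} as a black box for the pure-Gaussian limit $a_n v_n^{-1}F_n(u,H^1,[v_n]) \to \nu(u,\infty)$, and then its only real task is to show that re-introducing the i.i.d.\ exponentials $e_{n,i}$ does not change the limit. This is done via the Fubini identity \eqref{3.12}, a two-sided truncation of the $y$-integral (\eqref{3.13}--\eqref{3.15}), and a comparison to the computations in \cite{BAGun11} (equations \eqref{3.18}--\eqref{3.22}). Your proposal, by contrast, tries to reconstruct the Gaussian part from scratch with a Pickands-type cluster heuristic, which is both a fundamentally different route and the place where it breaks down.

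The cluster-size computation is the genuine gap. You condition on $H^1_{i^*}$ being \emph{exactly} at the threshold $y_n(u)$ and compute the expected number of companions $j$ with $H^1_j > y_n(u)$ as $2\sum_{k\ge 1}\bar\Phi\bigl(y_n(u)\sqrt{pk/n}\bigr)$. The substitution you use then yields a mean cluster size $\sim 1/(p\gamma_n^2)$. Dividing your (correct) first-moment estimate $t u^{-1}\gamma_n^{-2}$ by this gives $p\,t\,u^{-1}$, not $K_p\,t\,u^{-1} = 2p\,t\,u^{-1}$. The correct mean cluster size is $1/(2p\gamma_n^2)$ (this is what is needed to match the Pickands rate $C u^2\bar\Phi(u)$ with $C=2p/n$ and $u=y_n(u)\sim\gamma_n\sqrt n$; it is also what makes the \cite{BAGun11} limit come out to $2p u^{-1}$). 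The discrepancy comes from the conditioning: fixing $H^1_{i^*}=y_n(u)$ (a density conditioning at the level) is a size-biased Palm conditioning and produces a strictly larger expected cluster than the correct $\E[\#\text{exceedances}]/\E[\#\text{clusters}]$. The overshoot above the level, of order $1/y_n(u)$, is of the same order as the cluster-scale fluctuations $\sqrt{1-\Delta^1_{ij}}\sim n^{c-1/2}$ and cannot be ignored. Your attempt to recover the missing factor of $2$ from the ``Gumbel-like perturbations'' $\log e_{n,i}/(\sqrt n\beta_n)$ cannot work: those fluctuations are of order $n^{-1/2}\ll n^{c-1/2}$, are negligible at the cluster scale, and to leading order the exponentials only multiply the per-point exceedance probability by $\Gamma(1+\alpha_n)\to 1$ (not $\Gamma(\alpha_n)$ as stated, unless the Jacobian $d\eta/\eta$ from your own substitution is included; as written, the ``$\Gamma(\alpha_n)\sim\alpha_n^{-1}$'' line would double-count a factor of $\alpha_n^{-1}$). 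So the heuristic, run to completion, identifies the wrong constant.

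Even setting aside the constant, the proposal leaves the hardest step unaddressed: making the Pickands cluster analysis rigorous here requires upper/lower Bonferroni bounds for the block maxima, Slepian-type comparisons controlling the error terms in $\Delta^1_{ij}=1-2p|i-j|/n$ (the paper needs the explicit construction of $H^1$, the dilution with the Bernoulli thinning in Lemma~\ref{lemma2.4}, and the Gaussian comparison Lemma~\ref{lemma2.5}/\ref{lemma2.6} precisely to handle this), and concentration of the cluster sizes. None of this is sketched. The point of the paper's Lemma~\ref{lemma2.3} is precisely to \emph{avoid} reproving the Gaussian result and instead reduce to it; your proposal bypasses this reduction and therefore has to carry the full weight of \cite{BAGun11}'s Proposition 2.1, which it does not.
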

We prove Proposition \ref{prop2.2} and Lemmata \ref{lemma2.3}, \ref{lemma2.4}, and \ref{lemma2.6} for fixed $u>0$ only. 
To show that the claims also hold for $u_n=u \theta_n^{-\alpha_n}$, it is a simple rerun of their proofs, using $\theta_n^{-\alpha_n} \uparrow 1$ 
as $n\rightarrow\infty$.
\begin{proof}
It is shown in Proposition 2.1 of \cite{BAGun11} that, by setting the exponentially distributed random variables to $1$ in \eqref{3.8} and taking expectation with respect to the random environment, we get for all $u>0$ that
\be\label{3.10}
\lim_{n\rightarrow \infty} a_n v_n^{-1}  F_n(u, H^1,[v_n]) = \nu(u,\infty) \ .
\ee
Assume for simplicity that $\theta_n$ is a multiple of $v_n$. Note that blocks of $H^1$ of length $v_n$ are independent and identically distributed. 
Thus,
\bea\label{3.11}
k_n(t)F_n(u, H^1, [\theta_n])&=&k_n(t)\left(1-\left(1-F_n(u, H^1, [v_n])\right)^{\theta_n/v_n}\right)\nonumber\\
&\sim&k_n(t) \theta_n v_n^{-1} F_n(u, H^1, [v_n])\nonumber\\
&\stackrel{n\rightarrow \infty}{\longrightarrow}&\nu^t(u,\infty)\ .
\eea
To show that $k_n(t)\E G_n(u,H^1,[\theta_n])$ also converges to $\nu^t(u,\infty)$  as $n\rightarrow \infty$ we use same arguments as in \eqref{3.11} 
and prove that $a_n v_n^{-1}\E G_n(u,H^1,[v_n]) \rightarrow \nu(u,\infty)$ as $n\rightarrow \infty$. Using Fubini we have that
\bea\label{3.12}
\frac{a_n }{v_n}\E G_n(u, H^1,[v_n])&=& \frac{a_n }{v_n}\int_{c_n u^{1/\alpha_n}}^{\infty}dz \int_0^{\infty}dy \frac{ f_{\max_{i\in[v_n]} e_{n,i}}(y)}{y} f_{\max_{i\in[v_n]} e^{\b_n \sqrt{n} H^1(i)}}(\tfrac zy) \nonumber\\
&=& \frac{a_n }{v_n}\int_0^{\infty}dy  f_{\max_{i\in[v_n]} e_{n,i}}(y) F_n(u\ y^{-\alpha_n},H^1,[v_n])\ ,
\eea
where $f_{Z}(\cdot)$ denotes the density function of $Z$. Since we want to use computations from the proof of Proposition 2.1 in \cite{BAGun11}, 
it is essential that the integration area over $y$ is bounded from below and above. 
We bound \eqref{3.12} from above by
\bea
\eqref{3.12} &\leq&
a_n v_n^{-1} \PP\Bigl(\max_{i=1,\ldots,v_n} e_{n,i} \leq e^{-n v_n^{-1-\delta}}\Bigr) \label{3.13}\\
&+& a_n v_n^{-1} \int_{e^{-n v_n^{-1-\delta}}}^{e^{n v_n^{-1/2-\delta}}}dy f_{\max_{i\in[v_n]} e_{n,i}}(y) F_n(u\ y^{-\alpha_n},H^1,[v_n])\label{3.14} \\
&+& 
a_n v_n^{-1} \PP\Bigl(\max_{i=1,\ldots,v_n} e_{n,i} > e^{n v_n^{-1/2-\delta}}\Bigr) \label{3.15}\ ,
\eea
where $\delta>0$ is chosen in such a way that $n v_n^{-1-\delta}$ diverges and $v_n^{\delta} \gamma_n^2 \downarrow 0$ as $n\rightarrow \infty$, i.e. $\delta<\min\left\{2c, \frac{1-\omega}{\omega}\right\}$.
Then,
\be \label{3.16}
\eqref{3.13}=
a_n v_n^{-1}\Bigl(1-\exp\Bigl(-e^{-n v_n^{-1-\delta}}\Bigr)\Bigr)^{v_n}\leq a_n e^{-n v_n^{-\delta}} = o\left(e^{-n v_n^{-\delta}(1- \gamma_n^2 v_n^{\delta})}\right) \ ,
\ee
i.e. \eqref{3.13} vanishes as $n\rightarrow \infty$.
Similarly,
\be\label{3.17}
 \eqref{3.15} =a_n v_n^{-1}\Bigl(1-\Bigl(1-\exp\Bigl(-e^{-n v_n^{-1/2-\delta}}\Bigr)\Bigr)^{v_n}\Bigr)= o\Bigl(e^{\gamma_n^2 n - e^{n v_n^{-1/2-\delta}}}\Bigr) \stackrel{n\rightarrow \infty}{\longrightarrow} 0 \ .
\ee

As in equation (2.31) in \cite{BAGun11} we see that \eqref{3.14} is given by
\be\label{3.18}
\int_{e^{-n v_n^{-1-\delta}}}^{e^{n v_n^{-1/2-\delta}}} dy \frac{f_{\max_{i\in[v_n]} e_{n,i}}(y)}{\gamma_n^{2}v_n}\sum_{k=1}^{v_n}\int_{D_k^{''}} da_2 \cdots da_{v_n} \int_{\log (u y^{- \alpha_n })}^{\infty}da_1
\frac{e^{-h_k(a_1,\ldots,a_{v_n})}}{(2\pi)^{\frac{v_n-1}{2}}} \ ,
\ee
where for $k\in\{1,\ldots,v_n\}$
\be\label{3.19}
\textstyle{h_k(a_1,\ldots,a_{v_n}) = a_1-\frac{a_1^2 C_1 }{\gamma_n^{2} n}-\frac 1 2 \sum_{i=2}^{v_n} a_i^2+\frac{(a_2+\ldots+a_k - a_{k+1}-\ldots -a_{v_n})a_1 C_2} {\gamma_n n} \ ,}
\ee
for some constants $C_1, C_2 >0$ and a sequence of sets $D_k^{''}\subseteq \R^{v_n-1}$ such that 
\be\label{3.20}
\gamma_n^{-2}v_n^{-1}\sum_{k=1}^{v_n}\int_{D_k^{''}} da_2\cdots da_{v_n} (2\pi)^{-v_n/2-1/2}
e^{-\frac 1 2 \sum_{i=2}^{v_n} a_i^2}  \stackrel{n\rightarrow \infty}{\longrightarrow} K_p \ .
\ee
The aim is to separate $a_1$ from $a_2,\ldots, a_{v_n}$ in \eqref{3.19}. We bound the mixed terms in $e^{-h_k}$ up to an exponentially small 
error by $1$. This can be done using a large deviation argument for $|a_2 +\ldots+a_{v_n}|$ together with the fact that $|\log y| \in \left[n v_n^{-1-\delta}, n v_n^{-1/2-\delta}\right]$.
Computations yield together with the bounds in \eqref{3.18}-\eqref{3.20} that, up to a multiplicative error that tends to $1$ as $n\rightarrow\infty$ exponentially fast, \eqref{3.14} is bounded from above by
\be\label{3.21}
\int_{e^{-n v_n^{-1-\delta}}}^{\infty} dy f_{\max_{i\in[v_n]} e_{n,i}}(y)  y^{\alpha_n}\ u^{-1} K_p \leq\nu(u,\infty) \int_{0}^{\infty}dy f_{\max_{i\in[v_n]} e_{n,i}}(y)  y^{\alpha_n}\ .
\ee
Moreover by Jensen inequality,
\bea \label{3.22}
\eqref{3.21}&\leq&\nu(u,\infty) \Bigl(\EE_{\pi_n} \max_{i\in [v_n]} e_{n,i}\Bigr)^{\alpha_n} \nonumber\\
&=&\nu(u,\infty) \Bigl(\int_0^{\infty}dy\ \PP\Bigl(\max_{i\in [v_n]} e_{n,i}> y\Bigr)\Bigr)^{\alpha_n} \nonumber\\
&=&\nu(u,\infty) \Bigl(\int_0^{\infty} dy \Bigl(1-\left(1-e^{-y}\right)^{v_n}\Bigr)\Bigr)^{\alpha_n} \nonumber\\
&\leq& \nu(u,\infty) v_n^{\alpha_n} \ , 
\eea
which, as $n\rightarrow \infty$, converges to $\nu(u,\infty)$.

To conclude the proof of \eqref{3.9}, we bound \eqref{3.12} from below by
\be\label{3.23}
\eqref{3.12} \geq \frac{a_n}{v_n}\int_0^{\infty} dy f_{ e_{n,1}}(y) F_n(u\ y^{-\alpha_n},H^1,[v_n]) \ .
\ee
To show that the right hand side of \eqref{3.23} is greater than or equal to $\nu(u,\infty)$, one proceeds as before.
\end{proof}

In the following we form a random subset of $[\theta_n]$ in such a way that on the one hand, with high probability, it contains the maximum of $e^{\b_n \sqrt{n} H^1(i)}$ over all $i\in[\theta_n]$ . On the other hand it should be a sparse enough subset of $[\theta_n]$ so that we are able to de-correlate the random landscape and deal with the SK model. This dilution idea is taken from \cite{BAGun11}. 

If the maximum of $e^{\b_n \sqrt{n} H^1(i)}$ crosses the level $c_n u^{1/\alpha_n}$, then it will typically be much larger than $c_n u^{1/\alpha_n}$ so that, due to strong correlation, at least $\gamma_n^{-2}$ of its direct neighbors will be above the same level. To see this, we consider Laplace transforms. Set for $v>0$
\be\label{3.24}
\textstyle{
\wh{F}_n(v, H^1, \theta_n)\equiv\int_0^{\infty} dz\ e^{-zv} \P\left(\delta_n \sum_{i=1}^{\theta_n} \1_{e^{\b \sqrt{n} H^1(i)} >c_n u^{1/\alpha_n}} >z\right) \ ,}
\ee
where $\delta_n\in [0,1]$ for every $n \in \N$.
We have that 
\bea
\wh{F}_n(v, H^1, \theta_n)&=&\textstyle{ \frac 1 v\left(1-\E \exp\left(-\delta_n \sum_{i=1}^{\theta_n} \1_{e^{\b_n \sqrt{n} H^1(i)} >c_n u^{1/\alpha_n}}\right)\right)}\nonumber\\
&=&\textstyle{\frac 1 v \left(1-\left(\E \exp\left(-\delta_n \sum_{i=1}^{v_n} \1_{e^{\b_n \sqrt{n} H^1(i)} >c_n u^{1/\alpha_n}}\right)\right)^{\theta_n/v_n}\right)} . \label{3.25}
\eea
From \cite{BAGun11}, Proposition 1.3, we deduce that for the choice $\delta_n=\gamma_n^2\rho_n$, where $\rho_n$ is any diverging sequence of order $O(\log n)$,
\be\label{3.26}\textstyle{
\lim_{n\rightarrow\infty} a_n v_n^{-1} \left(1-\E \exp\left(-\delta_n \sum_{i=1}^{v_n} \1_{e^{\b_n \sqrt{n} H^1(i)} >c_n u^{1/\alpha_n}}\right)\right) = \nu(u,\infty) \ .}
\ee
Therefore we have for the same choice of $\delta_n$ that
\be\label{3.27}
k_n(t)\wh{F}_n(v, H^1, \theta_n) \rightarrow t v^{-1} \nu(u,\infty) \ .
\ee
From this we conclude that if the maximum is above the level $c_n u^{1/\alpha_n}$ then immediately $O(\gamma_n^{-2})$ are above this level. More precisely, we obtain
\begin{lemma}\label{lemma2.4}
Let $\rho_n$ be as described above. Let $\{\xi_{n,i}: \ i \in \N, \ n\in \N\}$ be an array of row-wise independent and identically distributed Bernoulli random variables such that $\P(\xi_{n,i}=1)= 1-\P(\xi_{n,i}=0)=\gamma_n^2 \rho_n$, and such that $\{\xi_{n,i}: i\in\N,\ n\in\N\}$ is independent of everything else. Set
\be\label{3.28}
\cal{I}_{k}= \{i  \in \{1,\ldots, k\}: \ \xi_{n,i}=1\} \ .
\ee
Then, for all $u>0$ and $t>0$
\be\label{3.29}
\lim_{n\rightarrow \infty} k_n(t) \E G_n(u, H^1, \cal{I}_{\theta_n}) = \nu^t(u,\infty) \ .
\ee
The same holds true when $u$ is replaced by $u_n=u\ \theta_n^{-\alpha_n}$.
\end{lemma}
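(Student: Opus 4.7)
The plan is to sandwich $k_n(t)\,\E G_n(u,H^1,\mathcal{I}_{\theta_n})$ between two expressions that both converge to $\nu^t(u,\infty)$, so that both bounds collapse to the desired limit. The upper bound will follow for free from Lemma \ref{lemma2.3} by monotonicity of the index set, while the lower bound will be squeezed out of the Laplace-transform convergence \eqref{3.27} at $v=1$.

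First I would use the independence of the thinning field $\{\xi_{n,i}\}$ from $H^1$ and the exponentials $\{e_{n,i}\}$. Setting
\[
N \equiv \sum_{i=1}^{\theta_n}\1_{\{e^{\sqrt{n}\beta_n H^1(i)}e_{n,i}\,>\,c_n u^{1/\alpha_n}\}} \qquad\text{and}\qquad \delta_n \equiv \gamma_n^2\rho_n,
\]
and conditioning on the $\s$-algebra generated by $H^1$ and the exponentials, the probability that the random set $\mathcal{I}_{\theta_n}$ contains at least one of the $N$ indices on which the indicator equals $1$ is exactly $1-(1-\delta_n)^N$. Taking expectations yields the clean identity
\[
\E G_n(u,H^1,\mathcal{I}_{\theta_n})\;=\;\E\bigl[1-(1-\delta_n)^N\bigr].
\]

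For the upper bound, the trivial inclusion $\mathcal{I}_{\theta_n}\subseteq[\theta_n]$ gives $G_n(u,H^1,\mathcal{I}_{\theta_n})\leq G_n(u,H^1,[\theta_n])$, so Lemma \ref{lemma2.3} immediately supplies the correct limit $\nu^t(u,\infty)$. For the lower bound I would invoke the elementary estimate $(1-\delta_n)^N\leq e^{-\delta_n N}$, which gives
\[
k_n(t)\,\E G_n(u,H^1,\mathcal{I}_{\theta_n})\;\geq\; k_n(t)\bigl(1-\E e^{-\delta_n N}\bigr),
\]
and then recognise the right-hand side as $k_n(t)\,\wh F_n(1,H^1,\theta_n)$ in the notation of \eqref{3.24}, which by \eqref{3.27} specialised to $v=1$ tends to $\nu^t(u,\infty)$. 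Combining the two estimates finishes the case of fixed $u>0$.

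The replacement $u\mapsto u_n=u\,\theta_n^{-\alpha_n}$ requires no new ideas: under the hypotheses $\alpha_n\log\theta_n\to 0$, so $\theta_n^{-\alpha_n}\uparrow 1$, and the threshold $c_n u_n^{1/\alpha_n}=c_n u^{1/\alpha_n}\theta_n^{-1}$ is only shifted by a polynomial factor. The whole derivation of \eqref{3.26}--\eqref{3.27} then carries over verbatim with $u$ replaced by $u_n$, as does Lemma \ref{lemma2.3}. I do not expect a serious obstacle here: the probabilistic substance has already been packaged into \eqref{3.27}, and what remains is essentially a one-line comparison between $1-(1-\delta_n)^N$ and $1-e^{-\delta_n N}$ together with the trivial monotonicity bound.
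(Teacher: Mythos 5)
Your overall sandwich strategy is clean and genuinely different from the paper's proof: the paper simply cites Lemma 2.3 of \cite{BAGun11} (which directly gives $\lim a_n v_n^{-1} F_n(u,H^1,\mathcal I_{v_n}) = \nu(u,\infty)$, i.e.\ the thinned version), and then passes to $\E G_n$ over $[\theta_n]$ by the block-independence and Fubini arguments already used for Lemma \ref{lemma2.3}. Your upper bound via $\mathcal I_{\theta_n}\subseteq[\theta_n]$ and Lemma \ref{lemma2.3} is correct, and your identity $\E G_n(u,H^1,\mathcal I_{\theta_n})=\E\bigl[1-(1-\delta_n)^N\bigr]$ from conditioning on the thinning field is also correct.

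However, the lower bound has a genuine gap. Your $N$ is defined \emph{with} the exponential variables, namely $N=\sum_{i=1}^{\theta_n}\1_{\{e^{\sqrt n\beta_n H^1(i)}e_{n,i}>c_n u^{1/\alpha_n}\}}$, whereas $\wh F_n$ in \eqref{3.24} is built from the \emph{exponential-free} indicators $\1_{\{e^{\sqrt n\beta_n H^1(i)}>c_n u^{1/\alpha_n}\}}$ — compare \eqref{3.25}, which expands $\wh F_n(v,H^1,\theta_n)$ as $\frac1v\bigl(1-\E\exp(-\delta_n\sum_i\1_{e^{\beta_n\sqrt n H^1(i)}>c_nu^{1/\alpha_n}})\bigr)$. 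Thus $\E[1-e^{-\delta_n N}]$ is \emph{not} $\wh F_n(1,H^1,\theta_n)$, and \eqref{3.27} cannot be invoked as stated. There is no a.s.\ or stochastic ordering between $N$ and its exponential-free analogue, so the mismatch cannot be waved away. To repair the argument you would need an analogue of \eqref{3.26}--\eqref{3.27} with the exponentials present, which amounts to redoing the Fubini/truncation argument of \eqref{3.12}--\eqref{3.23} for the Laplace functional; this is precisely the nontrivial piece of work the paper imports (implicitly) when it says the claim ``is deduced by the same arguments as in \eqref{3.11}''. As written, your proof establishes the upper bound but not the matching lower bound.
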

\begin{proof}
It is shown in Lemma 2.3 of \cite{BAGun11} that
\be\label{3.30}
\lim_{n\rightarrow \infty} a_n v_n^{-1} F_n(u, H^1, \cal{I}_{v_n}) = \nu(u,\infty) \ .
\ee
Since the random variables $\xi_{n,i}$ are independent, the claim of Lemma \ref{lemma2.4} is deduced by the same arguments as in \eqref{3.11}.\end{proof}
To conclude the proof of Proposition \ref{prop2.2}, we use a Gaussian comparison result. The following lemma is an adaptation of Theorem 4.2.1of \cite{LLR83}.
\begin{lemma}\label{lemma2.5}
Let $H^0$ and $H^1$ be Gaussian processes with mean $0$ and covariance matrix $\Delta^0=(\Delta^0_{ij})$ and $\Delta^1=(\Delta^1_{ij})$, respectively. Set $\Delta^m\equiv\left(\Delta^m_{ij}\right)=\left(\max\{\Delta^0_{ij}, \Delta^1_{ij}\}\right)$ and $\Delta^h \equiv h \Delta^0 + (1-h) \Delta^1$, for $h\in[0,1]$. Then, for $s\in\R$,
\bea\label{3.31}
&&\textstyle{\P(\max_{i\in I} H^0(i)\leq s)-\P(\max_{i\in I} H^1(i)\leq s) }\nonumber\\
&\leq&\textstyle{ \sum_{i,j\in I} (\Delta_{ij}^0-\Delta_{ij}^1)^+ \exp\left(-\frac{s^2}{1+\Delta^m_{ij}}\right)  \int_0^1 dh (1-(\Delta_{ij}^h)^2)^{-\frac1 2}}\ ,
\eea
where $(x)^+\equiv\max\{0,x\}$.
\end{lemma}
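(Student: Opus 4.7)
The plan is to deduce this from the classical Normal Comparison Lemma (cf.\ Theorem 4.2.1 of \cite{LLR83}) by the standard Gaussian interpolation argument, which we outline below.

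First, I would realize $H^0$ and $H^1$ on a common probability space as independent processes and set $H^h(i) \equiv \sqrt{h}\,H^0(i) + \sqrt{1-h}\,H^1(i)$ for $h\in[0,1]$, so that $H^h$ is a centered Gaussian process with covariance $\Delta^h = h\Delta^0+(1-h)\Delta^1$, and $H^0,H^1$ are recovered at $h=1,0$. Writing the joint density of $\{H^h(i)\}_{i\in I}$ as $\varphi_{\Delta^h}$, I would express
\be
\P\bigl(\max_{i\in I}H^0(i)\leq s\bigr)-\P\bigl(\max_{i\in I}H^1(i)\leq s\bigr)
=\int_0^1 \frac{d}{dh}\int_{(-\infty,s]^{|I|}} \varphi_{\Delta^h}(\bx)\,d\bx\, dh.
\ee
The heat-type identity $\partial_{\Delta_{ij}}\varphi_{\Delta}= \partial_{x_i}\partial_{x_j}\varphi_{\Delta}$ (for $i\neq j$; the diagonal entries $\Delta_{ii}$ are constant in $h$ under the unit-variance normalization used in the application) together with two applications of the fundamental theorem of calculus in the variables $x_i,x_j$ gives the well-known derivative formula
\be
\frac{d}{dh}\P\bigl(\max_{i\in I}H^h(i)\leq s\bigr)
=\tfrac12\sum_{i\neq j}(\Delta^0_{ij}-\Delta^1_{ij})\,\psi^h_{ij}(s,s),
\ee
where $\psi^h_{ij}(s,s)$ is the joint density of $(H^h(i),H^h(j))$ at $(s,s)$ multiplied by the probability that the remaining coordinates of $H^h$ are at most $s$.

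Second, to produce an upper bound I would drop the probability that the other coordinates stay below $s$ (bounding it by $1$) and retain only the positive part $(\Delta^0_{ij}-\Delta^1_{ij})^+$. For the bivariate Gaussian density with unit variances and correlation $\Delta^h_{ij}$ one has the explicit expression
\be
\psi^h_{ij}(s,s)\leq \frac{1}{2\pi\sqrt{1-(\Delta^h_{ij})^2}}\exp\!\left(-\frac{s^2}{1+\Delta^h_{ij}}\right).
\ee
On the set where $\Delta^0_{ij}-\Delta^1_{ij}>0$ one has $\Delta^h_{ij}\leq \Delta^0_{ij}=\Delta^m_{ij}$, so $1+\Delta^h_{ij}\leq 1+\Delta^m_{ij}$ and hence the exponential is bounded by $\exp(-s^2/(1+\Delta^m_{ij}))$, which is independent of $h$. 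Pulling this factor out of the $h$-integral leaves exactly $\int_0^1(1-(\Delta^h_{ij})^2)^{-1/2}\,dh$, absorbing the factor $\tfrac12$ into the symmetrization over $i\neq j$ to produce the stated inequality.

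The main technical point is justifying the Gaussian integration by parts and the interchange of $d/dh$ with the integral over $(-\infty,s]^{|I|}$; I would handle this either by a standard approximation making $\Delta^h$ strictly positive definite (adding $\varepsilon\,\mathrm{Id}$ and letting $\varepsilon\downarrow 0$) or by invoking the identity directly as in \cite{LLR83}. Since the argument is essentially a pointwise rewriting of Theorem 4.2.1 of \cite{LLR83} with the difference of covariances replaced by its positive part and the maximum covariance used in the exponential, no new ideas are required beyond keeping track of signs.
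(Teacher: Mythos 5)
Your proof is correct and follows precisely the route the paper itself invokes: Lemma~\ref{lemma2.5} is stated in the paper without proof as ``an adaptation of Theorem 4.2.1 of \cite{LLR83}'', and your Gaussian interpolation argument (with $H^h=\sqrt h\,H^0+\sqrt{1-h}\,H^1$, the Plackett/heat identity $\partial_{\Delta_{ij}}\varphi_\Delta=\partial_{x_i}\partial_{x_j}\varphi_\Delta$, and the explicit bivariate density at $(s,s)$) is exactly the standard proof of that normal comparison lemma. You correctly identify the implicit unit-variance normalization needed for the bivariate density formula and for the diagonal terms to be $h$-independent.

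One small bookkeeping remark: after bounding the conditional probability by $1$ and the exponential by $\exp(-s^2/(1+\Delta^m_{ij}))$, the $h$-integral that remains is $\int_0^1 \frac{dh}{2\pi\sqrt{1-(\Delta^h_{ij})^2}}$, so your chain of estimates in fact produces the stated right-hand side multiplied by $\tfrac{1}{2\pi}$ and summed over $i<j$ rather than over all $i,j$. That is a \emph{sharper} bound than \eqref{3.31} (which has the larger, unsymmetrized sum $\sum_{i,j}$ and no $1/(2\pi)$ prefactor), so the lemma as stated follows a fortiori; just be careful not to claim the two expressions are ``exactly'' equal.
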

We use Lemma \ref{lemma2.5} to prove that
\begin{lemma}\label{lemma2.6}
Let $H^0$ be given by $H^0(i)\equiv n^{-1/2} H_n(J_n(i))$, $i\in \N$. For all $u>0$ and $t>0$
\be\label{3.32}
\lim_{n\rightarrow \infty} k_n(t) E_{\pi_n} |\E G_n(u, H^0, \theta_n) -  \E G_n(u,H^1,\theta_n)| =0.
\ee
The same holds true when $u$ is replaced by $u_n=u \theta_n^{-\alpha}$.
\end{lemma}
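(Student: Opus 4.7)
The plan is to reduce Lemma \ref{lemma2.6} to the Gaussian comparison of Lemma \ref{lemma2.5} after conditioning on the chain $J_n$ and on the exponentials $\{e_{n,i}\}_{i=1}^{\theta_n}$. Under such conditioning, $H^0=\{n^{-1/2}H_n(J_n(i))\}$ and $H^1$ are both centered Gaussian vectors on $[\theta_n]$ with respective covariance matrices $\Delta^0_{ij}=R_n(J_n(i),J_n(j))^p$ and $\Delta^1_{ij}$ given by \eqref{3.6}. The event defining $G_n$ becomes the one-sided Gaussian event $\{\max_{i\in[\theta_n]}(H^{\bullet}(i)-\hat s_i)>0\}$ with
\be
\hat s_i \equiv (\beta_n\sqrt n)^{-1}\bigl(\alpha_n^{-1}\log u+\log c_n-\log e_{n,i}\bigr),
\ee
so that $\hat s_i = \gamma_n\sqrt n + o(\gamma_n\sqrt n)$ for typical $e_{n,i}$. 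A variable-threshold variant of Lemma \ref{lemma2.5}, obtained by the same Gaussian interpolation argument, controls the conditional difference $|G_n(u,H^0,[\theta_n])-G_n(u,H^1,[\theta_n])|$ by a sum of the form $\sum_{i,j}(\Delta^0_{ij}-\Delta^1_{ij})^+\,(1-(\Delta^m_{ij})^2)^{-1/2}\exp(-(\hat s_i^2+\hat s_j^2)/(2(1+\Delta^m_{ij})))$.

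Next I would estimate the double sum by partitioning the pairs $(i,j)\in[\theta_n]^2$ into three regimes. \emph{(a) Different blocks}, $\lfloor i/v_n\rfloor\neq\lfloor j/v_n\rfloor$: then $\Delta^1_{ij}=0$ and, by concentration of the simple random walk on $\S_n$ over at least $v_n$ steps, $\Delta^0_{ij}$ and $\Delta^m_{ij}$ are $o(1)$ with high $P_{\pi_n}$-probability; here the exponential factor is of order $\exp(-\gamma_n^2 n)\asymp a_n^{-2}$, which easily beats the $\theta_n^2$ pair count. \emph{(b) Same block, small $|i-j|$}: on the high-probability chain event that the walk does not revisit coordinates in a window of length $v_n$, $R_n(J_n(i),J_n(j))=1-2|i-j|/n$ exactly, so a Taylor expansion of $(1-2|i-j|/n)^p$ gives the crucial cancellation $|\Delta^0_{ij}-\Delta^1_{ij}|=O((|i-j|/n)^2)$. \emph{(c) Same block, intermediate $|i-j|$}: both covariances are bounded away from $1$ by a fixed amount, and the singular prefactor is harmless while the exponential $\exp(-\hat s_i^2/(1+\Delta^m_{ij}))$ is much smaller than $a_n^{-1}$. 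The integration over the $e_{n,i}$ is dealt with by truncating $|\log e_{n,i}|$ at a window of size $n^{o(1)}$, since the complementary tails contribute negligibly to $\mathbb E G_n$.

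Since $\log c_n=\gamma_n\beta_n n$, one has $\exp(-\hat s_i^2/2)\asymp a_n^{-1}$ up to polynomial factors, so the admissible conditional bound is of order $o(1/k_n(t))$ per realization; multiplying by $k_n(t)$ and taking $E_{\pi_n}$ then yields \eqref{3.32}. The extension to $u_n=u\,\theta_n^{-\alpha_n}$ is automatic because $\theta_n^{-\alpha_n}\uparrow 1$. The main obstacle is regime (b): the correlation $\Delta^m_{ij}$ is within $O(|i-j|/n)$ of $1$, making $(1-(\Delta^m_{ij})^2)^{-1/2}$ polynomially singular while $\exp(-\hat s_i^2/(1+\Delta^m_{ij}))$ only saves a single factor $a_n^{-1}$ rather than $a_n^{-2}$. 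It is precisely the quadratic smallness $|\Delta^0_{ij}-\Delta^1_{ij}|=O((|i-j|/n)^2)$, together with the summability in $|i-j|$, that must bear this burden; balancing it against the pair count and against $k_n(t)\asymp a_n/\theta_n$ is what forces both the dilution window $v_n$ to be sufficiently small within a block and the choice $\theta_n=3n^2$.
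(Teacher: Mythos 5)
Your high-level reduction to a Gaussian comparison with a Slepian-type interpolation inequality and a regime split over $(i,j)$ is in the spirit of the paper's Lemma~\ref{lemma2.5}, and regimes (a) and (c) are handled by essentially the same observations the paper uses (different blocks have $\Delta^1_{ij}=0$ and $\Delta^m_{ij}=o(1)$, hence a decisive factor $e^{-\gamma_n^2 n}\asymp a_n^{-2}$). However, there is a genuine gap in regime (b): you omit the \emph{dilution} argument, and without it your bound does not close for the full range $c\in(0,\tfrac12)$.

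Concretely, consider same-block pairs at lag $d=|i-j|$. You correctly get $|\Delta^0_{ij}-\Delta^1_{ij}|=O(d^2/n^2)$, the singular prefactor $(1-(\Delta^m_{ij})^2)^{-1/2}\asymp\sqrt{n/d}$, and an exponential factor $\exp\left(-\gamma_n^2 n/(1+\Delta^m_{ij})\right)\asymp a_n^{-1}e^{-c'\gamma_n^2 d}$. There are $O(\theta_n)$ such pairs at each lag, and multiplying by $k_n(t)\asymp a_n/\theta_n$ gives
\be
\frac{a_n}{\theta_n}\cdot\theta_n\cdot a_n^{-1}\sum_{d\geq 1}\frac{d^{3/2}}{n^{3/2}}\,e^{-c'\gamma_n^2 d}\;\asymp\;\gamma_n^{-1}\,n^{-1}\cdot\gamma_n^{-5}\;=\;n^{6c-1},
\ee
which tends to $0$ only for $c<\tfrac16$. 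The theorem needs $c<\tfrac12$. The paper makes this work by first reducing, via Lemma~\ref{lemma2.4}, to the \emph{diluted} index set $\mathcal{I}_{\theta_n}$ (a random subset with per-index inclusion probability $\gamma_n^2\rho_n$, $\rho_n=O(\log n)$); the pair count then acquires the extra factor $(\gamma_n^2\rho_n)^2$ appearing in \eqref{3.39}, which turns the bound into $n^{2c-1}(\log n)^2\to 0$ for all $c<\tfrac12$. Structurally the paper therefore splits $|\E G_n(u,H^0,[\theta_n])-\E G_n(u,H^1,[\theta_n])|$ into a positive-part estimate \eqref{3.33} (only cross-block pairs, cheap) plus the comparison on the diluted set \eqref{3.34} (same-block pairs, saved by the $(\gamma_n^2\rho_n)^2$ factor), using Lemma~\ref{lemma2.3} and Lemma~\ref{lemma2.4} to pass between the diluted and undiluted $H^1$-terms. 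This is exactly the step your proposal is missing.

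Two smaller issues. First, your ``no-revisit'' simplification $R_n(J_n(i),J_n(j))=1-2|i-j|/n$ is not uniformly available: revisits do occur, and the difference $\Delta^0_{ij}-\Delta^1_{ij}$ then has a \emph{linear} contribution $\tfrac{2p}{n}(|i-j|-D_{ij})$, cf.\ \eqref{4.4}. The paper controls this by the Ehrenfest-chain moment estimates in Lemma~\ref{lemma2.7} rather than a no-revisit event. Second, your ``variable-threshold variant'' of Lemma~\ref{lemma2.5} is not what the paper uses: the proof replaces the per-index random thresholds by the single worst-case threshold $\hat s_n$ (built from $\max_i\log e_{n,i}$) and then truncates via $B_n$, which is a cleaner way to decouple the exponentials from the Gaussian comparison; a per-index threshold interpolation lemma is plausible but would need to be proved.
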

\begin{proof}
The proof is in the same spirit as that of Proposition 3.1 in \cite{BAGun11}. Together with Lemma \ref{lemma2.4}, it is sufficient to show that 
\be\label{3.33}
k_n(t) E_{\pi_n}(\E G_n(u, H^1, [\theta_n]) -  \E G_n(u, H^0, [\theta_n]))^{+} \rightarrow 0
\ee
and
\be\label{3.34}
k_n(t) E_{\pi_n}|\E G_n(u, H^1, \cal{I}_{\theta_n}) -  \E G_n(u,H^0,\cal{I}_{\theta_n})| \rightarrow 0\ .
\ee
We do this by an application of Lemma \ref{lemma2.5}. Let $\hat s_n$ be given by
\be\label{3.35}
\textstyle{
\hat s_n=\frac{1}{\sqrt{n}\beta_n}\left(\log c_n + \frac{\beta_n}{\gamma_n} \log u- \max_{i\in[\theta_n]} \log e_{n,i}\right)\ .}
\ee
Then we obtain by Lemma \ref{lemma2.5} that
\bea\label{3.36}
&&\eqref{3.33} \nonumber\\
&= &\textstyle{k_n(t) E_{\pi_n}\left(\E \EE_{\pi_n}  \left[ \1_{\max_{i\in [\theta_n]} H^1(i)\leq \hat s_n} -\1_{\max_{i\in [\theta_n]} H^0(i)\leq \hat s_n}\left.\right|{\cal F}^{J}\right]\right)^+ }\nonumber\\
&\leq& \textstyle{k_n(t) E_{\pi_n}\sum_{i,j\in [\theta_n]} (\Delta_{ij}^1-\Delta_{ij}^0)^+ \EE_{\pi_n} e^{-\hat s_n^2 (1+\Delta^m_{ij})^{-1}} \int_0^1 dh (1-(\Delta_{ij}^h)^2)^{-\frac1 2}}\ .
\eea
To remove the exponentially distributed random variables $e_{n,i}$ in \eqref{3.36}, let $B_n=\{1 \leq \max_{i\in[\theta_n]} e_i \leq n\}$. We have for $s_n=(n^{1/2}\beta_n)^{-1}\left(\log c_n +\tfrac{\beta_n}{\gamma_n} \log u- \log n\right)$ that
\be\label{3.37}
\EE_{\pi_n}\left( \1_{B_n} \exp\left(-\hat s_n^2 (1+\Delta^m_{ij})^{-1}\right) \right)\leq \exp\left(-s_n^2 (1+\Delta^m_{ij})^{-1}\right)\ .
\ee
One can check that $k_n(t)\PP(B_n^c)\downarrow 0$. Moreover, by definition of $s_n$, there exists for all $u>0$ a constant $C<\infty$ such that for $n$ large enough
\be\label{3.38}
\eqref{3.33}\leq
\textstyle{C k_n(t)E_{\pi_n}\sum_{i,j\in [\theta_n]} (\Delta_{ij}^1-\Delta_{ij}^0)^+ e^{-\gamma_n^2 n(1+\Delta^m_{ij})^{-1}}  \int_0^1 dh (1-(\Delta_{ij}^h)^2)^{-\frac 1 2}}\ .
\ee
Likewise we deal with \eqref{3.34}. The terms in \eqref{3.34} are non-zero if and only if $i,j\in \cal{I}_{\theta_n}$. By assumption, the probability of this event is $(\gamma_n^2 \rho_n)^2$. Hence, \eqref{3.34} is bounded above by
\be\label{3.39}
C k_n(t) (\gamma_n^2 \rho_n)^2\textstyle{E_{\pi_n}\sum_{i,j\in [\theta_n]} |\Delta_{ij}^0-\Delta_{ij}^1| e^{-\gamma_n^2 n(1+\Delta^m_{ij})^{-1}}  \int_0^1 dh (1-(\Delta_{ij}^h)^2)^{-\frac 1 2}}\ .
\ee
We divide the summands in \eqref{3.38} and \eqref{3.39} respectively into two parts: pairs of $i,j$ such that $\lf i/v_n\rf \neq\lf j/v_n\rf$ and those such that $\lf i/v_n\rf=\lf j/v_n\rf$. If $\lf i/v_n\rf \neq\lf j/v_n\rf$ then we have by definition of $H^1$ that $\Delta^1_{ij}=0$. For $i,j$ such that $\lf i/v_n\rf=\lf j/v_n\rf$, we have $\Delta^1_{ij}\leq \Delta^0_{ij}$. In view of this, we get after some computations that
\be\label{3.40}
\eqref{3.38}\leq
C k_n(t) E_{\pi_n} \left[\textstyle{\sum_{\lf i/v_n\rf\neq\lf j/v_n\rf}^{\theta_n}(\Delta_{ij}^0)^- e^{-\gamma_n^2 n} }\right],
\ee
and 
\bea\label{3.41}
\eqref{3.39}&\leq&
C k_n(t) \gamma_n^4 \rho_n^2 E_{\pi_n} \left[\textstyle{\sum_{\lf i/v_n\rf\neq\lf j/v_n\rf}^{\theta_n}|\Delta_{ij}^0| e^{-\gamma_n^2 n(1+\Delta^0_{ij})^{-1}} }\right. \nonumber\\ 
&+&\left. \textstyle{\sum_{\lf i/v_n\rf=\lf j/v_n\rf}^{\theta_n}|\Delta_{ij}^0-\Delta_{ij}^1| e^{-\gamma^2 n(1+\Delta^0_{ij})^{-1}}   (1-(\Delta_{ij}^0)^2)^{-\frac 1 2}}
\right] \ .
\eea
Since $(\Delta_{ij}^0)^- =O(n)$ we know by definition of $a_n$ and $\theta_n$ that
\be\label{3.42}
\eqref{3.40}\leq C \theta_n n^{3/2} \alpha_n^{-1} e^{-\frac 1 2 \gamma_n^2 n } \ ,
\ee
which tends to zero as $n \rightarrow \infty$. Thus \eqref{3.33} holds true.

To conclude the proof of \eqref{3.34} we use Lemma \ref{lemma2.7} from the appendix. We get that \eqref{3.39} is bounded above by
\bea\label{3.43a}
\textstyle{\bar C t a_n \sum_{d=0}^{n} e^{-\gamma_n^2 n(1+d)^{-1}}\left(\tfrac{d^2}{v_n n}\1_{d\leq v_n} + \tfrac{\exp(\eta \gamma_n^2 \min\{d,n-d\})}{v_n \gamma_n^2}\right),}
\eea
for some $\bar C<\infty$ and $\eta<\infty$. With the same arguments as in the proof of (3.3) in \cite{BAGun11}, we obtain that \eqref{3.43a} tends to zero as $n \rightarrow \infty$.
\end{proof}
\begin{proof}[Proof of Proposition \ref{prop2.2}]
Observe that
\be\label{3.45}
\left| \E \bar\nu_n^t(u,\infty) - \nu^t(u,\infty)\right|= \left|k_n(t)E_{\pi_n}\E G_n(u,H^0, [\theta_n]) - \nu^t(u,\infty)\right|\ ,
\ee
which is bounded above by 
\be \label{3.46}
k_n(t)E_{\pi_n}\left|\E G_n(u,H^0, [\theta_n]) -\E G_n(u,H^1, [\theta_n])\right| + \left|k_n(t)\E G_n(u,H^1, [\theta_n]) - \nu^t(u,\infty)\right| \ .
\ee
By Lemma \ref{lemma2.3} and Lemma \ref{lemma2.6}, both terms vanish as $n \rightarrow \infty$ and Proposition \ref{prop2.2} follows.
\end{proof}

\subsection{Concentration of $\nu_n^t(u,\infty)$} \label{S32}
To verify the first part of Condition (2-1) we control the fluctuation of $\nu_n^t(u,\infty)$ around its mean.
\begin{proposition}\label{prop2.8}
For all $u>0$ and $t>0$ there exists $C=C(p,t,u)<\infty$, such that
\be\label{3.48}
\E \left(\bar \nu_n^t(u, \infty)- \E\bar \nu_n^t(u, \infty)\right)^2 \leq C \gamma_n^{-2} n^{1-p/2} \ .
\ee
The same holds true when $u$ is replaced by $u_n=u\theta_n^{-\alpha_n}$. In particular, for $p>5$ and $c\in(0,\frac 12)$ or $p=5$ and $c<\frac 14$, the first part of Condition (2-1) holds for all $u>0$ and $t>0$, $\P$-a.s.
\end{proposition}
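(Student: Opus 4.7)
The plan is to bound the variance directly by introducing two independent copies of the random walk and applying a Gaussian comparison to decouple the shared environment; then deduce almost-sure concentration by Chebyshev and Borel--Cantelli.

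\textbf{Variance as a two-walk expectation.} Writing $\bar\nu_n^t(u,\infty)=k_n(t)\,\PP_{\pi_n}(B_n)$ with $B_n=\{\max_{i\le\theta_n}\lambda_n^{-1}(J_n(i))e_{n,i}>c_n u^{1/\alpha_n}\}$, introduce two independent copies $(J_n^{(1)},e^{(1)})$ and $(J_n^{(2)},e^{(2)})$ of the jump chain and exponential clocks, both driven by the \emph{same} Gaussian Hamiltonian $H_n$. Conditioning on $(J_n^{(1)},J_n^{(2)})$ and on $H_n$, the events $B_n^{(1)},B_n^{(2)}$ become conditionally independent, so
\be\label{varplan}
\Var_{\P}\!\bigl(\bar\nu_n^t(u,\infty)\bigr)
=k_n(t)^2\,E_{\pi_n^{\otimes 2}}\!\Bigl[\Cov_{\P}\!\bigl(\PP(B_n^{(1)}\mid J_n^{(1)},H_n),\,\PP(B_n^{(2)}\mid J_n^{(2)},H_n)\bigr)\Bigr].
\ee
The $\P$-covariance inside vanishes once we replace the Gaussian field along the second walk by an independent copy $\widetilde H_n$, so the task reduces to bounding the difference of the two joint Gaussian probabilities.

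\textbf{Gaussian comparison and overlap moments.} Apply Lemma~\ref{lemma2.5} to the two Gaussian vectors $\{H_n(J_n^{(1)}(i)),H_n(J_n^{(2)}(j))\}_{i,j\le\theta_n}$ (common environment) vs.\ the one with cross-covariances zeroed out. The only nonzero covariance difference is the cross part $n R_n(J_n^{(1)}(i),J_n^{(2)}(j))^p$, producing an error of the form
\be
C\sum_{i,j=1}^{\theta_n} \bigl|R_n(J_n^{(1)}(i),J_n^{(2)}(j))\bigr|^p\,
\exp\!\bigl(-s_n^2/(1+|R_n|^p)\bigr)\,\bigl(1-R_n^{2p}\bigr)^{-1/2},
\ee
with $s_n^2\sim\gamma_n^2 n$, exactly as in the proof of Lemma~\ref{lemma2.6}. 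For independent simple random walks on $\S_n$ started from $\pi_n$, the components of $J_n^{(1)}(i)-J_n^{(2)}(j)$ are, once the chains have mixed, approximately i.i.d.\ Rademacher, so $nR_n(J_n^{(1)}(i),J_n^{(2)}(j))$ is a centered sum with variance of order $n$. Moment bounds (Khintchine/Rosenthal) then give $E_{\pi_n^{\otimes 2}}|R_n(J_n^{(1)}(i),J_n^{(2)}(j))|^p\le C n^{-p/2}$ for all but a negligible (sub-mixing) set of index pairs.

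\textbf{Assembling the variance bound.} Summing over $(i,j)\in\{1,\dots,\theta_n\}^2$, multiplying by the prefactor $k_n(t)^2$ in \eqref{varplan}, and bookkeeping as in the passage from \eqref{3.38}--\eqref{3.39} to \eqref{3.43a}, one gets a bound of order $k_n(t)^2\theta_n^2 n^{-p/2}\,e^{-\gamma_n^2 n}\,\alpha_n^{-1}$. With $\theta_n=3n^2$, $a_n=\sqrt{2\pi n}\,\gamma_n^{-1}e^{\gamma_n^2n/2}$ and $k_n(t)\sim a_n t/\theta_n$, this simplifies to $C(p,t,u)\,\gamma_n^{-2}n^{1-p/2}$, which is \eqref{3.48}. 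The treatment for $u$ replaced by $u\theta_n^{-\alpha_n}$ is identical since $\theta_n^{-\alpha_n}\uparrow 1$. The main obstacle here is the short-time-separation regime $|i-j|\le v_n$ and pairs where the chain has not yet decorrelated: these must be isolated and absorbed by $e^{-\gamma_n^2 n}$, mirroring the split performed between \eqref{3.40} and \eqref{3.41}.

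\textbf{Almost-sure concentration.} Given the variance bound, Chebyshev yields, for each fixed $\varepsilon,u,t>0$,
\be
\P\bigl(|\bar\nu_n^t(u,\infty)-\E\bar\nu_n^t(u,\infty)|>\varepsilon\bigr)\le\varepsilon^{-2}C\,n^{2c+1-p/2}.
\ee
The exponent $2c+1-p/2$ is strictly less than $-1$ exactly in the two regimes listed: if $p\ge 6$ and $c<\tfrac12$ (then $2c+1-p/2<2-p/2\le -1$), or if $p=5$ and $c<\tfrac14$. In these cases Borel--Cantelli gives almost-sure convergence of $\bar\nu_n^t(u,\infty)$ to $\nu^t(u,\infty)=K_p t/u$ along any fixed $(u,t)$; applying this to a countable dense set of $(u,t)\in(0,\infty)^2$ and using monotonicity of $u\mapsto\bar\nu_n^t(u,\infty)$ together with continuity of $u\mapsto K_p t/u$ upgrades the statement to all $u,t>0$ simultaneously, establishing the first part of Condition~(2-1) $\P$-a.s.
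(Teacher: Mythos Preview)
Your overall strategy---introduce two independent copies of the walk, apply the normal comparison Lemma~\ref{lemma2.5} to decouple the cross-covariances, then invoke Chebyshev and Borel--Cantelli---is exactly the paper's. The execution, however, has a real gap.

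The problem is in ``assembling the variance bound''. You cannot combine the moment estimate $E_{\pi_n^{\otimes 2}}|R_n|^p\le Cn^{-p/2}$ with a \emph{separate} factor $e^{-\gamma_n^2 n}$, because the comparison lemma produces the coupled quantity $|R_n|^p\exp\bigl(-\gamma_n^2 n/(1+|R_n|^p)\bigr)$. Writing $\exp(-\gamma_n^2 n/(1+x))=e^{-\gamma_n^2 n}\exp\bigl(\gamma_n^2 n\,x/(1+x)\bigr)$, the object that must actually be controlled is the tilted moment $E\bigl[|R_n|^p\exp(\gamma_n^2 n|R_n|^p/(1+|R_n|^p))\bigr]$; Khintchine/Rosenthal says nothing about this. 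The paper instead inserts the exact binomial law of the overlap, \eqref{3.62}, and analyses the exponent $\Upsilon_{n,p}(d/n)=\gamma_n^2-I(d/n)-\gamma_n^2/(1+|1-2d/n|^p)$ in \eqref{3.63}: since $\gamma_n^2=n^{-2c}\to 0$, the binomial rate $I$ dominates the $\gamma_n^2$ correction uniformly (this is \eqref{3.63a}), and only then does the sum collapse to $C\gamma_n^{-2}n^{1-p/2}$. Skipping this step leaves an uncontrolled factor $e^{\gamma_n^2 n}$ in your bound.

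Two secondary remarks. First, because both walks start from $\pi_n$ and are independent, $J_n^{(1)}(i)$ and $J_n^{(2)}(j)$ are independent uniform on $\S_n$ for \emph{every} pair $(i,j)$; there is no ``sub-mixing'' regime to isolate, and the overlap has the exact distribution \eqref{3.62} throughout---this is precisely what makes the computation \eqref{3.63} clean. Second, the factor $\alpha_n^{-1}$ in your assembled bound is spurious and would not cancel (since $\alpha_n\to 0$); the correct bookkeeping is simply $k_n(t)^2\theta_n^2\, e^{-\gamma_n^2 n}\sim a_n^2 t^2 e^{-\gamma_n^2 n}=2\pi t^2 n\gamma_n^{-2}$.
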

\begin{proof}
Let $\left\{e'_{n,i} :i \in \N , n \in \N\right\}$ and $J'_n$ be independent copies of $\left\{e_{n,i} :i \in \N , n \in \N\right\}$  and $J_n$ 
respectively. Writing $\pi_n$ for the initial distribution of $J_n$ and $\pi'_n$ for that of $J_n'$, we define
\bea\label{3.56}
\bar{G}_n(u,H^0,[\theta_n]) &\equiv& \textstyle{\PP_{\pi_n}\left(\left.\max_{i\in[\theta_n]} e^{\b_n H_n(J_n(i))} e_{n,i}\leq c_n u^{1/\alpha_n}\right|\cal{F}^{J}\right)}\nonumber\\
\bar{G}_n(u,H^{0'},[\theta_n]) &\equiv&\textstyle{\PP_{\pi'_n}\left(\left. \max_{i\in[\theta_n]} e^{\b_n H_n(J'_n(i))}e'_{n,i}\leq c_n u^{1/\alpha_n}\right|\cal{F}^{J'}\right).}
\eea
Then, as in (3.21) in \cite{BG10},
\bea\label{3.57}
\E\left(\EE_{\pi_n} \bar{G}_n(u,H^0,[\theta_n])\right)^2 &=& \E \EE_{\pi_n}\bar{G}_n(u,H^0,[\theta_n]) \EE_{\pi'_n}\bar{G}_n(u,H^{0'},[\theta_n])\nonumber\\
&=& \EE_{\pi_n}\EE_{\pi'_n}\E \bar G_n(u,V^0,[2\theta_n])\ ,
\eea
where $V^0$ is a Gaussian process defined by
\be\label{3.58}
V^0(i)= \begin{cases} n^{-1/2} H_n(J_n(i)), &\hbox{\rm if}\ 1 \leq i \leq \theta_n,\\
n^{-1/2} H_n(J'_n(i)),&\hbox{\rm if} \ \theta_n+1 \leq i \leq 2 \theta_n \ .
\end{cases}
\ee

To further express $\left(\E \EE_{\pi_n} \bar{G}_n(u,H^0,[\theta_n])\right)^2$, let $V^1$ be a centered Gaussian process with covariance matrix
\be\label{3.59}
\Delta_{ij}^1 = \begin{cases} \Delta_{ij}^0, &\hbox{\rm if}\, \max\{i,j\} \leq \theta_n, \mbox{ or }\min\{i,j\} \geq \theta_n,\\
0,&\hbox{\rm else} ,\ 
\end{cases}
\ee
where $\Delta^0=(\Delta_{ij}^0)$ denotes the covariance matrix of $V^0$. Then, as in (3.23) in \cite{BG10}, 
\be\label{3.60}
\left(\E \EE_{\pi_n} \bar{G}_n(u,H^0,[\theta_n])\right)^2 =\EE_{\pi_n}\EE_{\pi'_n} \E\bar{G}_n(u,V^1,[2\theta_n])\ .
\ee
As in the proof of Lemma \ref{lemma2.6} we use Lemma \ref{lemma2.5} to obtain that
\bea\label{3.61}
&& k_n^2(t)\E \left(\EE_{\pi_n} \bar{G}_n(u,H^0,[\theta_n])- \E \EE_{\pi_n} \bar{G}_n(u,H^0,[\theta_n])\right)^2 \nonumber \\
 &\leq &
2 k_n^2(t) \sum_{{1 \leq i\leq\theta_n }\atop{ \theta_n+1 \leq j\leq2\theta_n}} E_{\pi_n} E_{\pi'_n} \Delta_{ij}^0 e^{-\gamma_n^2 n (1+\Delta^0_{ij})^{-1}}.
\eea
It is shown in (3.29) of \cite{BG10} that
\be\label{3.62}
E_{\pi_n} E_{\pi_n'} \1_{\Delta_{ij}^0 =\left(\frac{m}{n}\right)^p } = 2^{-n} {n\choose (n-m)/2}, \quad \mbox{ for }m\in \{0,\ldots,n\}.
\ee
From this, and with the definition of $a_n$, we have that
\bea\label{3.63}
\eqref{3.61}&\leq& 2 t^2 a_n^2 \sum_{m=0}^n  2^{-n} {n\choose (n-m)/2} \left(\frac m n\right)^p \exp\left( -\frac{\gamma_n^2 n } {1+(\frac m n)^p}\right)\nonumber \\
&\leq& 2 t^2\gamma_n^{-2} \sum_{m=0}^n 2^{-n} n {n\choose (n-m)/2}  \left(\frac m n\right)^p \exp\left(\gamma_n^2 n 
\frac{(\frac m n)^p} {1+(\frac m n)^p}\right)\nonumber\\
&=& 2 t^2\gamma_n^{-2}\sum_{d=0}^n 2^{-n} n {n\choose d}  \left(1-\frac{2d}{n}\right)^p \exp\left(\gamma_n^2 n 
\frac{ (1-\frac{2d}{n})^p} {1+(1-\frac{2d}{n})^p}\right)\nonumber\\
&\leq&2 t^2\gamma_n^{-2} \sum_{d=0}^n n^{1/2}  \left(1-\frac{2d}{n}\right)^p_+ \exp\left(n \Upsilon_{n,p}\left(\tfrac dn\right)\right) 
J_n\left(\tfrac dn\right)\ ,
\eea
where for $u\in(0,1)$ we set $\Upsilon_{n,p}(u)=\gamma_n^2-I(u)-\gamma_n^2 (1+|1-2u|^p)^{-1}$ and 
$J_n(u)=2^{-n} {n\choose \lfloor nu \rfloor} \sqrt{\pi n} e^{n I(u)}$ for $I(u)=u\log u + (1-u)\log(1-u)+\log 2$.
Note that \eqref{3.63} has the same form as (3.28) in \cite{BBC08}. Following the strategy of \cite{BBC08}, we show that there exist $\delta,\delta'>0$
and $c>0$ such that
\be\label{3.63a}
\Upsilon_{n,p}\leq \begin{cases} -c\left(u-\tfrac 12\right)^2 , &\hbox{\rm if}\, u\in(\tfrac 12-\delta, \tfrac 12+\delta),\\
-\delta',&\hbox{\rm else}.
\end{cases}
\ee
Since $\gamma_n=n^{-c}$ this can be done, independently of $p$, as in \cite{BAGun11} (cf. (3.19) and (3.20)). Finally, together with the calculations
 from (3.28) in \cite{BBC08} we obtain that
\be\label{3.64}
\E \left(\bar \nu_n^t(u, \infty)- \E\bar \nu_n^t(u, \infty)\right)^2 \leq C\gamma_n^{-2} n^{1-p/2}.
\ee
The same arguments and calculations are used to prove that \eqref{3.48} also holds when $u$ is replaced by $u_n=u \theta_n^{-\alpha_n}$.
Let $p>5$ and $c\in(0,\frac 12)$ or $p=5$ and $c<\frac 14$. Then, by Borel-Cantelli Lemma, for all $u>0$ and $t>0$ there exists a set $\Omega(u,t)$ with $\P(\Omega(u,t))=1$ such that on $\Omega(u,t )$, for all $\varepsilon>0$ and $n$ large enough, we have that $|\bar\nu_n^t(u,\infty)-\nu^t(u,\infty)|<\varepsilon$ and $|\bar\nu_n^t(u_n,\infty)-\nu^t(u,\infty)|<\varepsilon$. From this we conclude together with \eqref{3.4} that, on $\Omega(u,t)$ and for $n$ large enough,
\be\label{3.65}
\nu^t(u,\infty)-\varepsilon \leq \nu_n^t(u,\infty) \leq \nu^t(u_n,\infty)+\varepsilon ,
\ee
i.e. Condition (2-1) is satisfied, for all $u>0$ and $t>0$, $\P$-a.s.
\end{proof}
\begin{proposition}\label{prop2.9}
Let $p=2,3,4$ and $c\in(0,\frac 12)$ or $p=5$ and $c>\frac 14$. Then, the first part of Condition (2-1) holds in $\P$-probability for all $u>0$ and $t>0$.
\end{proposition}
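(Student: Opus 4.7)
The plan is to reduce Proposition \ref{prop2.9} to an in-probability concentration statement for $\bar\nu_n^t$ around its mean, and then deduce this concentration from the variance estimate of Proposition \ref{prop2.8} via Chebyshev's inequality, supplemented by a sharper argument in the parameter regimes where the raw variance bound does not vanish.

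First, by the sandwich (3.4), $\bar\nu_n^t(u,\infty) \leq \nu_n^t(u,\infty) \leq \bar\nu_n^t(u\theta_n^{-\alpha_n},\infty)$, together with the convergence $\E\bar\nu_n^t(u,\infty) \to t\nu(u,\infty)$ and $\E\bar\nu_n^t(u\theta_n^{-\alpha_n},\infty) \to t\nu(u,\infty)$ supplied by Proposition \ref{prop2.2} (which explicitly handles both $u$ and $u_n = u\theta_n^{-\alpha_n}$), it suffices to show that $\bar\nu_n^t(u_n,\infty) - \E\bar\nu_n^t(u_n,\infty) \to 0$ in $\P$-probability, uniformly in $u_n \in \{u, u\theta_n^{-\alpha_n}\}$. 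Granting this, one repeats the closing argument of the proof of Proposition \ref{prop2.8}, with ``$\P$-a.s.'' replaced by ``in $\P$-probability''.

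Second, Chebyshev's inequality together with the bound of Proposition \ref{prop2.8} gives
\be
\P\bigl(|\bar\nu_n^t(u_n,\infty) - \E\bar\nu_n^t(u_n,\infty)| > \varepsilon\bigr) \leq C\varepsilon^{-2}\gamma_n^{-2} n^{1-p/2} = C\varepsilon^{-2} n^{2c+1-p/2},
\ee
whose right-hand side vanishes precisely when $2c+1-p/2 < 0$. This covers $p=4$ with $c\in(0,\tfrac 12)$ and $p=5$ with $c\in(\tfrac 14,\tfrac 12)$, yielding Proposition \ref{prop2.9} immediately in these ranges.

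The remaining cases are $p=2$ for all $c\in(0,\tfrac 12)$ and $p=3$ for $c\in[\tfrac 14,\tfrac 12)$, where the raw bound no longer tends to zero. Here the natural strategy is to refine the second-moment computation by a Gaussian comparison at the level of two replicas, in the spirit of Lemmas \ref{lemma2.5} and \ref{lemma2.6}: one compares $\bar\nu_n^t$ with a block-independent surrogate built from the process $H^1$ of (3.6), whose variance decays because its $\theta_n/v_n$ blocks are independent, and controls the replacement error by invoking Lemma \ref{lemma2.5} on the two-replica covariance matrix $\Delta^0 = (\Delta^0_{ij})$ of the joint process over $[2\theta_n]$ introduced in (3.58)--(3.59). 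I expect the main technical obstacle to be obtaining a sharp enough two-replica comparison for $p=2$, where the SK covariance $R_n^p$ decays very slowly: the exponentially small factor $\exp(-\gamma_n^2 n(1+\Delta^m_{ij})^{-1})$ arising from Lemma \ref{lemma2.5} must compensate the large contribution of pairs of indices whose walks meet near the critical overlap $d/n = \tfrac 12$ studied in (3.63a), which will require adapting the delicate bookkeeping of (3.41)--(3.43a) to the two-replica cross-block sums.
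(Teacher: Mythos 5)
Your reduction to an in-probability concentration statement for $\bar\nu_n^t$ around its mean, via the sandwich (\ref{3.4}) and Proposition \ref{prop2.2}, is sound, and your raw Chebyshev computation from Proposition \ref{prop2.8} correctly disposes of the cases where $2c+1-p/2<0$ (i.e.\ $p=4$ and $p=5$ for all $c\in(0,\tfrac 12)$, and $p=3$ for $c<\tfrac 14$). However, for the remaining cases $p=2$ (all $c$) and $p=3$ with $c\geq \tfrac 14$, your proposal stops at a sketch that does not close the argument, and the mechanism you suggest is not the one that works.

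The missing idea is the \emph{dilution}: the paper replaces $\nu_n^t$ by $k_n(t)\,\EE_{\pi_n}G_n(u,H^0,{\cal I}_{\theta_n})$, where ${\cal I}_{\theta_n}$ is the Bernoulli-thinned index set of Lemma \ref{lemma2.4}, and decomposes the deviation into three pieces (\ref{3.66})--(\ref{3.66b}). The first and third pieces are controlled by the first-moment convergence of Lemmata \ref{lemma2.3}, \ref{lemma2.4}, \ref{lemma2.6}; the second piece is a variance of the \emph{thinned} quantity, and in the two-replica normal comparison the constraint $i,j\in{\cal I}_{\theta_n}$ contributes an extra factor $(\gamma_n^2\rho_n)^2$ to the cross-block sum in (\ref{3.61}). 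This improves the variance bound from $C\gamma_n^{-2}n^{1-p/2}$ to $C(\gamma_n\rho_n)^2 n^{1-p/2}$, which vanishes for all $p\geq 2$ and all $c\in(0,\tfrac 12)$. Your alternative of replacing $H^0$ by a block-independent $H^1$-surrogate does not supply this gain: $H^1$ alters the within-replica covariance structure, but the variance estimate is governed by the \emph{cross-replica} covariance $\Delta^0_{ij}$ for $i\leq\theta_n<j$ in (\ref{3.58})--(\ref{3.61}), which is unaffected by the $H^0\to H^1$ substitution; it is the sparsity of ${\cal I}_{\theta_n}$, not block-independence, that suppresses those cross terms. As written, your argument therefore has a genuine gap at $p=2$ and $p=3$ (with $c\geq\tfrac14$).
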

\begin{proof}
For all $\varepsilon>0$, we bound $\P\left(|\nu_n^t(u,\infty)-\E(\nu_n^t(u,\infty))|>\varepsilon\right)$ from above by
\bea
&&\P\left(|\nu_n^t(u,\infty)-k_n(t)\EE_{\pi_n}G_n(u,H^0,{\cal I}_{\theta_n})|>\varepsilon/3\right)\label{3.66}\\
&+& \P\left(k_n(t)|\EE_{\pi_n}G_n(u,H^0,{\cal I}_{\theta_n})-\E\EE_{\pi_n}G_n(u,H^0,{\cal I}_{\theta_n}) |>\varepsilon/3\right)\label{3.66a}\\
&+& \1_{\{|\E(\nu_n^t(u,\infty)) - k_n(t)\E\EE_{\pi_n}G_n(u,H^0,{\cal I}_{\theta_n})|>\varepsilon/3\}}\label{3.66b}.
\eea
Observe that by a first order Chebychev inequality,
\be\label{3.66c}
\eqref{3.66}\leq |\E\nu_n^t(u,\infty)-k_n(t)\E\EE_{\pi_n}G_n(u,H^0,{\cal I}_{\theta_n})|.
\ee
By Lemmata \ref{lemma2.3}, \ref{lemma2.4}, and \ref{lemma2.6}, \eqref{3.66c} tends to zero as $n \rightarrow \infty$. For the same reason, \eqref{3.66b} is equal to zero for large enough $n$. To bound \eqref{3.66a}, we  calculate the variance of $k_n(t)\EE_{\pi_n}G_n(u,H^0,{\cal I}_{\theta_n})$. As in the proof of Proposition \ref{prop2.8} we use Lemma \ref{lemma2.5}, but take into account that there can only be contributions to the left hand side of \eqref{3.31} if $i,j\in{\cal I}_{\theta_n}$. This gives us the additional factor $\left(\gamma_n^{2}\rho_n\right)^2$ in \eqref{3.61}. Therefore the variance of $k_n(t)\EE_{\pi_n}G_n(u,H^0,{\cal I}_{\theta_n})$ is bounded above by $C (\gamma_n \rho_n)^2 n^{1-p/2}$ which, for all $p\geq 2$, vanishes as $n \rightarrow \infty$. Hence, we have proved Proposition \ref{prop2.9}.
\end{proof}
\subsection{Second part of Condition (2-1)} \label{S33}
We proceed as in Section 3.4 in \cite{BG10} to verify the second part of Condition (2-1) . With the same notation as in \eqref{1.9}, we define for $u>0$ and $t>0$
\bea
\wt \eta_n^t(u)&\equiv& k_n(t) n^{-1}\sum_{x\in \S_n} \left(Q_n^u(x)\right)^2 \ , \label{3.65aa}\\
\eta_n^t(u)&\equiv& k_n(t)\sum_{x\in \S_n} \sum_{x'\in \S_n} \mu_n(x,x') Q_n^u(x) Q_n^u(x') \ , \label{3.66aa} 
\eea
where $\mu_n(\cdot,\cdot)$ is the uniform distribution on pairs $(x,x')\in \S_n^2$ that are at distance $2$ apart, i.e.
\be\label{3.67}
\mu_n(x,x')= \begin{cases} 2^{-n} \frac{2}{n(n-1)} , &\hbox{\rm if}\, \dist(x,x')=2,\\
0,&\hbox{\rm else}.
\end{cases}
\ee
We prove that the expectations of both \eqref{3.65aa} and \eqref{3.66aa} tend to zero. First and second order Chebychev inequalities then yield that the second part of Condition (2-1) holds in $\P$-probability, respectively $\P$-a.s.
\begin{lemma}\label{lemma3.10}
For all $u>0$ and $t>0$
\be\label{3.68}
\lim_{n\rightarrow\infty}\E\wt \eta_n^t(u)=\lim_{n\rightarrow\infty}\E\eta_n^t(u) =0\ .
\ee
\end{lemma}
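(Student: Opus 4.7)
My proof plan handles the two expectations separately.

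\textbf{Bound for $\wt\eta_n^t(u)$.} Since $Q_n^u(x)\in[0,1]$, one has $(Q_n^u(x))^2\le Q_n^u(x)$. The law of the $p$-spin Hamiltonian $H_n$ is invariant under the automorphisms of the hypercube $\S_n$, so $\E Q_n^u(x)$ does not depend on $x$. Interpreting the summation in \eqref{3.65aa} as $\pi_n$-weighted (to match the first term of $(\s_n^t)^2$ in Condition (2-1)),
$$\E\wt\eta_n^t(u) \;\le\; \frac{k_n(t)}{n}\sum_{x\in\S_n}\pi_n(x)\,\E Q_n^u(x) \;=\; \frac{\E\nu_n^t(u,\infty)}{n} \;\stackrel{n\to\infty}{\longrightarrow}\; 0$$
by Proposition \ref{prop2.1a}.

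\textbf{Bound for $\eta_n^t(u)$.} The same invariance of $H_n$ makes $\E Q_n^u(x)Q_n^u(x')$ depend only on $\dist(x,x')=2$, so
$$\E\eta_n^t(u) \;=\; k_n(t)\,\E\bigl[Q_n^u(x_0)Q_n^u(x'_0)\bigr]$$
for any fixed $x_0,x'_0$ at distance $2$. Since plain Cauchy--Schwarz yields only $O(1)$, I would adapt the Gaussian-comparison argument of Proposition \ref{prop2.8}. Express $Q_n^u(x_0)Q_n^u(x'_0)$ as the joint probability that, under two independent walks $J_n$ (from $x_0$) and $J'_n$ (from $x'_0$) with independent exponentials, both blocked sums exceed $c_n u^{1/\alpha_n}$; using $\mathrm{sum}\le\theta_n\cdot\mathrm{max}$, this is dominated by the probability that both the corresponding maxima of $\lambda_n^{-1}(\cdot)\, e_{n,\cdot}$ exceed $c_n u_n^{1/\alpha_n}$ with $u_n=u\,\theta_n^{-\alpha_n}$. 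Introduce the concatenated Gaussian process $V^0$ on $[1,2\theta_n]$ as in \eqref{3.58} (now with walks issued from $x_0,x'_0$) and its decoupled version $V^1$ with zero cross-block covariance. Lemma \ref{lemma2.5} then gives
$$\E\bigl[Q_n^u(x_0)Q_n^u(x'_0)\bigr] \;\le\; \bigl(\E G_n(u_n,H^0,[\theta_n])\bigr)^2 \;+\; R_n.$$
The main term is $O(1/k_n(t)^2)$ by Lemma \ref{lemma2.3} and Proposition \ref{prop2.2}, contributing $O(1/k_n(t))\to 0$ to $\E\eta_n^t(u)$ after multiplication by $k_n(t)$. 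The Slepian remainder $R_n$ is a sum over cross-block pairs $(i,j)\in[\theta_n]\times(\theta_n,2\theta_n]$ of contributions $|\Delta_{ij}^0|\,e^{-\gamma_n^2 n/(1+\Delta_{ij}^0)}$ weighted by the integral factor of Lemma \ref{lemma2.5}. Taking $E_{\pi_n}E_{\pi'_n}$, invoking the overlap estimate Lemma \ref{lemma2.7}, and majorising via the large-deviation bound \eqref{3.63a} applied to the binomial sum \eqref{3.63}, exactly as in Proposition \ref{prop2.8}, yields $R_n\le C\gamma_n^{-2}n^{1-p/2}/k_n(t)^2$. Since $k_n(t)$ grows faster than any polynomial in $n$, $k_n(t)R_n\to 0$ for every $p\ge 2$ and every admissible exponent $c$.

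\textbf{Main obstacle.} The delicate ingredient is the Gaussian-comparison bookkeeping for $\eta_n^t(u)$: one must verify that the cross-block Slepian error, averaged over the two simple random walks started from $x_0$ and $x'_0$, satisfies the same large-deviation bound \eqref{3.63a} used in Proposition \ref{prop2.8}, and that the extra factor $1/k_n(t)^2$ coming from the factored structure of the decoupled process $V^1$ (as opposed to the single $1/k_n(t)$ available in the variance setting) is enough to beat the $k_n(t)$ prefactor present in $\eta_n^t(u)$.
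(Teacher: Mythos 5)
Your bound for $\wt\eta_n^t(u)$ is clean and correct (granting the $\pi_n$-weighting, which is indeed the intended reading of \eqref{3.65aa} since $\wt\eta_n^t$ is meant to be the diagonal $x'=x$ piece of $(\s_n^t)^2$ in \eqref{1.19}). The reduction $(Q_n^u)^2\le Q_n^u$ together with Proposition \ref{prop2.1a} gives $\E\wt\eta_n^t(u)\le n^{-1}\E\nu_n^t(u,\infty)\to 0$; this is in fact more direct than what the paper sketches.

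The argument for $\eta_n^t(u)$, however, has a genuine gap. You reduce to $k_n(t)\,\E[Q_n^u(x_0)Q_n^u(x'_0)]$ for $\dist(x_0,x'_0)=2$ and then apply the Gaussian comparison of Lemma \ref{lemma2.5} to the concatenated process $V^0$ on $[2\theta_n]$ versus the decoupled $V^1$, asserting that the Slepian remainder ``satisfies the same large-deviation bound \eqref{3.63a} used in Proposition \ref{prop2.8}.'' This is false as stated. The bound \eqref{3.63a} rests on the identity \eqref{3.62}, which computes the distribution of $D_{ij}=\dist(J_n(i),J'_n(j))$ when both $J_n$ and $J'_n$ are started from independent uniform points, so that $D_{ij}$ is binomial and concentrated near $n/2$ uniformly in $i,j$. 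Here the two walks start at distance $2$. For $i,j$ of order up to the mixing time $O(n\log n)$ the distance $D_{ij}$ is with probability one of order $i+j+2\ll n$, so $\Delta^0_{ij}=R_n(J_n(i),J'_n(j))^p$ is deterministically close to $1$, not exponentially rarely. For such pairs the Slepian term is of order $e^{-\gamma_n^2 n/2}\sim \sqrt{n}\,\gamma_n^{-1}a_n^{-1}$, the number of such pairs is of order $(n\log n)^2$, and after multiplying by $k_n(t)\sim a_n/n^2$ the contribution is of order $(\log n)^2\sqrt{n}\,\gamma_n^{-1}=(\log n)^2 n^{1/2+c}$, which diverges. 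So the ``extra $1/k_n(t)^2$'' you invoke does not survive the early-time covariances.

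What is missing is the \emph{burn-in} that the paper introduces precisely to address this. The paper replaces $\bar Q_n^u(x)$ by $\EE_x\bar K_n^u(x)$, where $\bar K_n^u$ is the non-exceedance probability of the maximum over $i\in\{\bar\theta_n,\ldots,\theta_n\}$ with $\bar\theta_n=2n\log n$, thereby discarding the first $\bar\theta_n$ steps during which the two walks are still correlated. It then splits on the event $\cal A_n$ that by step $\bar\theta_n$ the walks have separated to distance $>n(1-\rho(n))$; on $\cal A_n$ the cross-covariances are small enough for the Slepian bound to give $O(\theta_n^2 a_n^{-2})$, and on $\cal A_n^c$ the paper uses the crude bound $O(a_n^{-1})$ together with the estimate $P_x(\cal A_n^c)\le n^{-4}$ from \cite{BG10}. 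Without this truncation of the index set your comparison cannot close; the rest of your outline (express the product as a joint probability over two independent walks, pass to the decoupled process, use Slepian on the cross-block pairs) is sound, and it can be repaired by inserting $\bar\theta_n$ and the event $\cal A_n$ exactly as in the paper, plus accounting for the error incurred in dropping the first $\bar\theta_n$ steps (which is what the paper's term \eqref{3.75} controls via Proposition \ref{prop2.2}).
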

\begin{proof}
We show that $\lim_{n\rightarrow\infty}\E\eta^t_n(u)=0$. The assertion for $\wt\eta_n^t(u)$ is proved similarly. Let
\be\label{3.69}\textstyle{
\bar Q_n^u(x) \equiv\PP_{x}\Bigl(
\sum_{j=1}^{\theta_n}\lambda^{-1}_n(J_n(j))e_{n,j}\leq c_n u^{1/\alpha_n}\Bigr) \ .}
\ee
Rewrite \eqref{3.66aa} in the following way
\bea
&&\textstyle{k_n(t)\sum_{x\in \S_n} \sum_{x'\in \S_n} \mu_n(x,x')\left(1-\bar Q_n^u(x)\right) \left(1-\bar Q_n^u(x')\right)}\nonumber\\
&=&\textstyle{k_n(t)\left[1-\sum_{(x,x')\in \S_n^2} \mu_n(x,x')\left(\bar Q_n^u(x) + \bar Q_n^u(x')-\bar Q_n^u(x)\bar Q_n^u(x')\right)\right]}\nonumber\\
&=&\textstyle{k_n(t)\left[1- 2 \sum_{x\in \S_n} \pi_n(x)\bar Q_n^u(x) + \sum_{(x,x')\in \S_n^2} \mu_n(x,x')\bar Q_n^u(x)\bar Q_n^u(x')\right]}\label{3.70} \ . \ \ 
\eea
To shorten notation, write
\be\label{3.71}
\textstyle{K_n^u \equiv  \textstyle{ \PP_{\pi_n}\Bigl( \left.\max_{i\in \{\overline{\theta}_n,\ldots,\theta_n\}} e^{\sqrt{n}\b_n H^0(i)}e_{n,i} > c_nu^{1/\alpha_n}\right|\cal{F}^{J}\Bigr)}=\sum_{x \in \S_n} 2^{-n}K_n^u(x) ,}
\ee
where $\overline{\theta}_n\equiv 2n \log n$ and
\be\label{3.72}
\textstyle{ K_n^u(x) \equiv \textstyle{ \PP_{x}\left( \left.\max_{i\in \{\overline{\theta}_n,\ldots,\theta_n\}} e^{\sqrt{n}\b_n H^0(i)}e_{n,i}> c_n u^{1/\alpha_n}\right|\cal{F}^{J}\right)}}.
\ee
Using the bound $\bar Q_n^u(x) \leq \EE_{x}(1-K_n^u(x)) \equiv\EE_{x}\bar K_n^u(x)$, $x \in \S_n$, and taking expectation with respect to the random environment we obtain that
\bea\label{3.73}
\E\eta_n^t(u)&\leq&k_n(t)- 2\left(k_n(t)-\E \nu_n^t (u,\infty)\right) \\
&+& \textstyle{k_n(t) \sum_{(x,x')\in \S^2_n}\mu_n(x,x')\E \left[ \EE_{x}\bar K_n^u(x) \EE_{x'}\bar K_n^u(x')\right]}.\label{3.73a}
\eea
For $\bar G_n^{u} \equiv\PP_{\pi_n}\left(\max_{i\in [\theta_n]} e^{\sqrt{n}\b_n H^0(i)}e_{n,i} \leq c_n u^{1/\alpha_n}\right)$ observe that
\be\label{3.73b}
\eqref{3.73}\leq k_n(t)- 2 k_n(t) \E\bar G_n^{u}.
\ee
We add and subtract $\E \EE_{\pi_n} (1- K_n^u)\equiv\E \EE_{\pi_n}\bar K_n^u$ as well as  
\be\label{3.74}
\textstyle{
\sum_{(x,x')\in \S_n^2}\mu_n(x,x')\E \EE_{x} \bar K_n^u(x) \EE_{x'}\bar K_n^u(x')}.
\ee
Re-arranging the terms and using the bound from \eqref{3.73b} we see that $\E\eta_n^t(u)$ is bounded from above by
\bea\label{3.75}
&&2k_n(t)\left(\E \bar K_n^u - \E\bar G_n^{u} \right)\\
&+&k_n(t)\sum_{x,x'}\mu_n(x,x') \E \EE_{x}K_n^u(x) \E \EE_{x'}K_n^u(x')\label{3.76}\\
&+&k_n(t)\sum_{x,x'}\mu_n(x,x')\left(\E \left[\EE_{x}\bar K_n^u(x) \EE_{x'}\bar K_n^u(x')\right] - \E \EE_{x}\bar K_n^u(x) \E \EE_{x'}\bar K_n^u(x')\right). \ \ \ 
\label{3.77}
\eea
From Proposition \ref{prop2.2} we conclude that \eqref{3.75} and \eqref{3.76} are of order $O\left(\frac{\log n}{n}\right)$ and $O\left(\theta_n a_n^{-1}\right)$ respectively. To control \eqref{3.77} we use the normal comparison theorem (Lemma \ref{lemma2.5}) for the processes $V^0$ and $V^1$ as in Proposition \ref{prop2.8}. However, due to the fact that we are looking at the chain after $\bar \theta_n$ steps, the comparison is simplified. More precisely, let $\cal{A}_n\equiv \left\{\forall \bar \theta_n\leq i\leq \theta_n : \ \dist(J_n(i),J'_n(i))>n(1-\rho(n))\right\}$ $\subset \cal{F}^{J}\times\cal{F}^{J'}$, where $\rho(n)$ is of the order of $\sqrt{n^{-1} \log n}$. Then, on $\cal{A}_n$, by Lemma \ref{lemma2.5} and the estimates from \eqref{3.34},
\be\label{3.78}
\E \left[ \bar K_n^u(x) \bar K_n^u(x')\right] - \E \bar K_n^u(x) \E \bar K_n^u(x') \leq 2 \gamma_n^{-2}
\sum_{{1 \leq i\leq\theta_n }\atop{ \theta_n+1 \leq j\leq2\theta_n}} \Delta_{ij}^0 e^{-\gamma_n^2 n (1+\Delta^0_{ij})^{-1}}\leq O(\theta_n^{2}a_n^{-2}).
\ee
Moreover, on $\cal{A}_n^c$,
\be\label{3.79}
\E \left[ \bar K_n^u(x) \bar K_n^u(x')\right] - \E \bar K_n^u(x) \E \bar K_n^u(x') \leq O(a_n^{-1}).
\ee
But in Lemma 3.7 from \cite{BG10} it is shown that for a specific choice of $\rho(n)$ and every $x \in \S_n$
\bea\label{3.80}
P\left(\cal{A}_n | \dist(J_n(0),J'_n(0))=2\right) &\geq& 1-n^{-8}\nonumber\\
P_x\left(\cal{A}_n^c\right) &\leq& n^{-4}.
\eea
Therefore we obtain that $\lim_{n\rightarrow \infty} \E \eta_n^t(u) = 0$.
\end{proof}
\begin{remark}
Lemma \ref{lemma3.10} immediately implies that the second part of Condition (2-1) holds in $\P$-probability. To show that it is satisfied $\P$-almost surely for $p>5$ and $c\in(0,\frac 12)$ or $p=5$ and $c<\frac14$ it suffices to control the variance of \eqref{3.75}. We use the same concentration results as in Proposition \ref{prop2.8} to obtain that the variance of $k_n(t)(\bar K_n^u - \bar G_n^{u})$, which is given by
\be
k_n^2(t)\left[\E\left(\bar K_n^u-\E \bar K_n^u\right)^2+\E \left(\bar G_n^{u}-\E\bar G_n^{u}\right)^2-2\left(\E \bar G_n^{u} \bar K_n^u-\E \bar G_n^{u} \E\bar K_n^u\right)\right],
\ee
is bounded from above by $C\gamma_n^{-2} n^{1-p/2}$.

\end{remark}

\subsection{Condition (3-1)} \label{S34}
We show that Condition (3-1) is $\P$- a.s. satisfied for all $\delta>0$.
\begin{lemma}\label{lemma2.11}
We have $\P$-a.s. that
\be\label{3.82}
\limsup_{n\rightarrow \infty}\left(a_n \left(c_n \delta^{1/\alpha_n} \right)^{-1}\EE_{\pi_n} \lambda^{-1}_n(J_n(1)) e_{n,1} \1_{\lambda^{-1}_n(J_n(1)) e_{n,1} \leq c_n \delta^{1/\alpha_n}}\right)^{\alpha_n}< \infty, \quad \forall \delta>0.
\ee
\end{lemma}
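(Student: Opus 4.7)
The plan is to bound the quantity by a product of an explicit deterministic prefactor and the auxiliary ``partition function'' $Z_n := 2^{-n}\sum_{x\in\S_n}e^{\gamma_n H_n(x)}$, after which a one-line Markov plus Borel--Cantelli argument handles the $\P$-a.s. control of $Z_n$.

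First I would apply the elementary interpolation $z\,\1_{\{z\leq M\}}\leq z^{\alpha_n}M^{1-\alpha_n}$, valid for $z\geq 0$ and $\alpha_n\in(0,1)$ (split into the cases $z\leq M$ and $z>M$), with $z = \lambda_n^{-1}(J_n(1))e_{n,1}$ and $M = M_n := c_n\delta^{1/\alpha_n}$. Since $J_n(1)\sim \pi_n$ is uniform on $\S_n$, $e_{n,1}$ is an independent $\mathrm{Exp}(1)$ variable with $\E e_{n,1}^{\alpha_n} = \Gamma(1+\alpha_n)$, and $\lambda_n^{-1}(x)^{\alpha_n}=\tau_n(x)^{\alpha_n}=e^{\alpha_n\beta_n H_n(x)}=e^{\gamma_n H_n(x)}$ (using $\alpha_n\beta_n=\gamma_n$), this gives
\begin{equation*}
\EE_{\pi_n}\bigl[\lambda_n^{-1}(J_n(1))e_{n,1}\,\1_{\{\lambda_n^{-1}(J_n(1))e_{n,1}\leq M_n\}}\bigr] \leq M_n^{1-\alpha_n}\,\Gamma(1+\alpha_n)\,Z_n.
\end{equation*}
Multiplying by $a_n M_n^{-1}$ and substituting $a_n = \sqrt{2\pi n}\,\gamma_n^{-1}e^{\gamma_n^2 n/2}$ together with $M_n^{\alpha_n} = c_n^{\alpha_n}\delta = e^{\gamma_n^2 n}\delta$ (since $c_n^{\alpha_n}=e^{\gamma_n\beta_n n\cdot\gamma_n/\beta_n} = e^{\gamma_n^2 n}$), the deterministic prefactor collapses to
\begin{equation*}
a_n\bigl(c_n\delta^{1/\alpha_n}\bigr)^{-1}\EE_{\pi_n}[\cdots] \leq \sqrt{2\pi n}\,\gamma_n^{-1}\delta^{-1}\,e^{-\gamma_n^2 n/2}\,\Gamma(1+\alpha_n)\,Z_n.
\end{equation*}

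Second, I would control $Z_n$: since $\E Z_n = e^{\gamma_n^2 n/2}$, Markov's inequality gives $\P(Z_n > n^2 e^{\gamma_n^2 n/2})\leq n^{-2}$, so the first Borel--Cantelli lemma yields $\P$-a.s.\ $Z_n\leq n^2 e^{\gamma_n^2 n/2}$ for all $n$ large. Plugging this in, the upper bound on $a_n(c_n\delta^{1/\alpha_n})^{-1}\EE_{\pi_n}[\cdots]$ becomes $\sqrt{2\pi n}\,\gamma_n^{-1}\delta^{-1}\Gamma(1+\alpha_n)\,n^2$, which is polynomial in $n$. Its $\alpha_n$-th power tends to $1$ as $n\to\infty$, because $\alpha_n\log n = (\gamma_n/\beta_n)\log n\leq n^{-c}\log n/\beta_0\to 0$ and similarly $\alpha_n\log\gamma_n^{-1}\to 0$, $\alpha_n\log\delta^{-1}\to 0$, $\Gamma(1+\alpha_n)^{\alpha_n}\to 1$. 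This bounds the $\limsup$ in \eqref{3.82} by $1$, $\P$-a.s., for every $\delta>0$.

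The argument is essentially bookkeeping; the single substantive ingredient is the interpolation inequality, which replaces the truncated first moment by the $\alpha_n$-moment and ensures that the exponential factors $e^{\gamma_n^2 n/2}$ appearing in $a_n$, $M_n^{-\alpha_n}$, and $\E Z_n$ cancel exactly. The ``main obstacle'', such as it is, would be to find a sufficiently strong $\P$-a.s.\ bound on the partition function $Z_n$; but because the subsequent $\alpha_n$-th power swallows any polynomial factor in $n$, the crudest Markov bound at the $n^{-2}$ level is ample, uniformly in $p\geq 2$.
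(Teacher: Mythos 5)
Your proof is correct, and it takes a genuinely different and more elementary route than the paper's. The paper bounds the (deterministic) expectation $\E\EE_{\pi_n}Y_{n,\delta}\leq 4(\delta\gamma_n\beta_n)^{-1}$ by an explicit Gaussian-integral computation (splitting the $e_{n,1}$-integral at $y=n^2$, using a Gaussian tail bound on the inner $z$-integral), then invokes a non-trivial exponential variance estimate imported from Lemma~3.10 of \cite{BG10} together with Borel--Cantelli, and finally extends from rational $\delta$ to all $\delta>0$ by monotonicity. You short-circuit all of this with the one-line pointwise interpolation $z\,\1_{\{z\leq M\}}\leq z^{\alpha_n}M^{1-\alpha_n}$, which reduces the truncated first moment to the quenched $\alpha_n$-moment $\Gamma(1+\alpha_n)\,Z_n$ with $Z_n=2^{-n}\sum_x e^{\gamma_n H_n(x)}$, after which the exact exponential cancellation $a_n M_n^{-\alpha_n}\E Z_n = \sqrt{2\pi n}\,\gamma_n^{-1}\delta^{-1}$ and a trivial Markov bound $\P(Z_n>n^2\E Z_n)\leq n^{-2}$ finish the job. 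This buys several things: no external concentration lemma is needed; the null set produced by Borel--Cantelli (coming only from $Z_n$, which is $\delta$-free) already works for all $\delta>0$ simultaneously, so the rational-$\delta$ density step is unnecessary; and the argument is visibly uniform in $p\geq2$, since $Z_n$ depends on $H_n$ only through its one-point marginals. The price is a cruder constant --- the bound before the $\alpha_n$-th power is $\sqrt{2\pi n}\,n^2\gamma_n^{-1}\delta^{-1}\Gamma(1+\alpha_n)$ rather than $O((\delta\gamma_n\beta_n)^{-1})$ --- but as you observe, the $\alpha_n$-th power, with $\alpha_n\log n\to0$, swallows any polynomial in $n$, so this costs nothing. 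Two small checks that you handled implicitly but which are worth being explicit about: the Borel--Cantelli threshold $n^2\E Z_n$ is itself a function of $n$, so "eventually" has the usual $\omega$-dependent $N(\omega)$, which is harmless since for each fixed $\delta$ the bound still gives $\limsup\leq 1$; and the interpolation requires $\alpha_n\in(0,1)$, which holds for $n$ large because $\alpha_n\downarrow0$.
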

\begin{proof}
We begin by proving that for all $\delta>0$, for $n$ large enough,
\bea\label{3.83}
\textstyle{\frac{a_n}{c_n \delta^{1/\alpha_n}} \EE_{\pi_n} \E \lambda^{-1}_n(J_n(1)) e_{n,1} \1_{\lambda^{-1}_n(J_n(1)) e_{n,1} \leq c_n \delta^{1/\alpha_n}}}&=& \textstyle{\sum_{x\in \S_n} 2^{-n}\E Y_{n,\delta}(x)}\nonumber\\ 
&\leq& 4 (\delta \gamma_n \beta_n)^{-1},
\eea
where $Y_{n,\delta}(x)\equiv a_n \left(c_n \delta^{1/\alpha_n} \right)^{-1} \lambda^{-1}_n(x) e_{n,1}  \1_{\lambda^{-1}_n(x) e_{n,1} \leq c_n \delta^{1/\alpha_n}}$, for $x \in \S_n$.

For $x\in \S_n$ we have that
\bea\label{3.84}
\E Y_{n,\delta}(x)&=& a_n(c_n \delta^{1/\alpha_n})^{-1} (2\pi)^{-1/2} \int_0^{\infty}dy \int_{-\infty}^{y_n}dz\ y e^{-y-\frac{z^2}{2}+\b_n \sqrt{n}z} \nonumber\\
&=&a_n(c_n \delta^{1/\alpha_n})^{-1} (2\pi)^{-1/2}\int_0^{\infty}dy \int_{\b_n \sqrt{n}-y_n}^{\infty} dz\ y e^{-y+\frac{\b_n^2 n}{2}-\frac{z^2}{2}} ,
\eea
where $y_n\equiv (\sqrt{n}\b_n)^{-1} \left(\log c_n + \frac{\beta_n}{\gamma_n}\log \delta - \log y\right)$ for $y>0$.
In order to use estimates on Gaussian integrals, we divide the integration area over $y$ into $y \leq n^2$ and $y>n^2$.

For $y>n^2$, there exists a constant $C'>0$ such that
\be\label{3.87}
(2\pi)^{-1/2}a_n (c_n\delta^{1/\alpha_n})^{-1} \int_{n^2}^{\infty}dy \int_{-\infty}^{y_n}dz\ y e^{-y-\frac{z^2}{2}+\b_n \sqrt{n}z} \leq C' a_n  n^4 e^{-n^2},
\ee
which vanishes as $n \rightarrow \infty$.

Let $y\leq n^2$. By definition of $c_n$ we have $\b_n \sqrt{n}-y_n  = \sqrt{n} \b_n \left(1-\tfrac{\gamma_n}{\beta_n}-\tfrac{\log \delta}{\gamma_n\beta_n n}+\tfrac{\log y}{\beta^2_n n}\right)$. Since $\alpha_n\downarrow 0$ as $n \rightarrow \infty$, it follows that for $n$ large enough $\b_n\sqrt{n}-y_n> 0$. 
But then, since $\P(Z>z)\leq (\sqrt{2\pi})^{-1} z^{-1} e^{-z^2/2}$ for any $z>0$ and $Z$ being a standard Gaussian,
\be\label{3.85}
\int_0^{n^2}dy \int_{-y_n+\b_n \sqrt{n}}^{\infty} dz\ y e^{-y+\frac{\b_n^2 n}{2}-\frac{z^2}{2}} \leq \int_0^{n^2}dy \frac{y e^{-y}}{\b_n\sqrt{n}-y_n}e^{\frac{\b_n^2 n}{2}-\frac{(\b_n\sqrt{n}-y_n)^2}{2}}\ .
\ee
Plugging in the definition of $a_n$ and $c_n$, \eqref{3.87} and \eqref{3.85} yield that, for $n$ large enough, up to a multiplicative error that tends to $1$ as $n\rightarrow\infty$ exponentially fast,
\bea\label{3.86}
\eqref{3.84}&\leq& \textstyle{\int_0^{n^2} dy\ y^{\alpha_n} e^{-y}(\gamma_n\beta_n\delta)^{-1} \left(1-\frac{\gamma_n}{\beta_n} -\frac{\log \delta}{n\gamma_n\beta_n}+
\frac{\log y}{\beta_n^2 n} \right)^{-1} e^{2 \log \delta \log n (n\gamma_n\beta_n)^{-1}}}\nonumber\\
&\leq& 2 \int_0^{n^2} dy\ y^{\alpha_n} e^{-y}(\gamma_n\beta_n\delta)^{-1} \nonumber\\
&\leq& \textstyle{2 \Gamma\left(1+\frac{\gamma_n}{\beta_n}\right) \left(\gamma_n\beta_n\delta\right)^{-1},}
\eea
where $\Gamma(\cdot)$ denotes the gamma function. Since $\Gamma(1+\alpha_n)\leq 1$ for $\alpha_n\leq 1$, the claim of \eqref{3.83} holds true  for all $\delta>0$ for $n$ large enough.

Lemma 3.10 from \cite{BG10} yields that for all $\delta>0$ there exists $\kappa>0$ such that
\be\label{3.88}
\E \left(\EE_{\pi_n} Y_{n,\delta}\right)^2 - \left( \E\EE_{\pi_n} Y_{n,\delta}\right)^2 \leq a_n^2 \left(c_n \delta^{1/\alpha_n} \right)^{-2}
 n^{1-p/2}\leq e^{-n^{\kappa}},
\ee
where $\EE_{\pi_n}Y_{n,\delta}\equiv \sum_{x \in \S_n} 2^{-n}Y_{n,\delta}(x)$. For all $\delta>0$  there exists by Borel-Cantelli Lemma a set $\Omega(\delta)$ with $\P(\Omega(\delta))=1$ such 
that on $\Omega(\delta)$, for all $\varepsilon>0$ there exists $n' \in \N$ such that
\be\label{3.89}
\EE_{\pi_n} Y_{n,\delta}\leq 4 \left(\gamma_n\beta_n\delta\right)^{-1} + \varepsilon , \quad \forall n\geq n' .
\ee
Setting $\Omega^{\tau}\equiv \bigcap_{\delta\in \Q\cap(0,\infty))} \Omega(\delta)$, we have $\P(\Omega^{\tau})=1$.

Let $\delta>0$ and $\varepsilon>0$. We can always find $\delta' \in \Q$ such that $\delta\leq \delta'\leq 2\delta$. Note that $Y_{n,\delta}$
 is increasing in $\delta$. Moreover, by \eqref{3.89} there exists $n'=n'(\delta', \varepsilon)$ such that on $\Omega^{\tau}$ and for $n\geq n'$
\be\label{3.90} 
\left(\EE_{\pi_n} Y_{n,\delta} \right)^{\alpha_n} \leq \left(\EE_{\pi_n}Y_{n,\delta'}\right)^{\alpha_n}
 \leq \left(4 \left(\gamma_n\beta_n\delta'\right)^{-1} + \varepsilon\right)^{\alpha_n} \leq 
4\left(\gamma_n\beta_n\delta'\right)^{-\alpha_n}.
\ee
Since $(\gamma_n\beta_n)^{-\alpha_n}\downarrow 1$ as $n \rightarrow \infty$, we obtain the assertion of Lemma \ref{lemma2.11}.
\end{proof}

\subsection{Proof of Theorem \ref{p:main}} \label{S35}
We are now ready to conclude the proof of Theorem \ref{p:main}.

First let $p> 5$ and $\gamma_n=n^{-c}$ for $c\in \left(0,\frac 1 2\right)$, or $p=5$ and $c>\frac 1 4$. Then we know by Propositions \ref{prop2.2} and \ref{prop2.8} that for all $u>0$ there exists a set $\Omega(u)$ with $\P (\Omega(u))=1$, such that on $\Omega(u)$
\be\label{3.91}
\lim_{n\rightarrow\infty} \nu_n^t(u,\infty)=K_p t u^{-1}, \quad \forall t>0.
\ee
The mapping that maps $u$ to $\nu_n^t(u,\infty)$ is decreasing on $\left(0, \infty\right)$ and its limit, $u^{-1}$, is continuous on the same interval. Therefore, setting $\Omega_1^{\tau}=\bigcap_{u \in \left(0,\infty\right)\cap\Q} \Omega(u)$, we have $\P(\Omega_1^{\tau})=1$ and \eqref{3.91} holds true for all $u>0$ on $\Omega_1^{\tau}$. By the same arguments and the results in Section \ref{S33} there also exists a subset $\Omega_2^{\tau}$ with full measure and such that the second part of Condition (2-1) holds on $\Omega_2^{\tau}$.

Condition (3-1) holds $\P$-a.s. by Lemma \ref{lemma2.11}. Finally, we are left with the verification of Condition (0) for the invariant measure $\pi_n(x)=2^{-n}$, $x\in \S_n$. For $v>0$, we have that
\be\label{3.92}
\sum_{x\in\S_n}2^{-n}e^{-v^{\alpha_n}c_n \lambda_n(x)}= \sum_{x\in\S_n} 2^{-n} \PP_{\pi_n}\left(\lambda^{-1}_n(x) e_{n,1}> c_n v^{\alpha_n}\right) .
\ee
By similar calculations as in \eqref{3.86}, we see that, for $n$ large enough and $x\in\S_n$,
\be\label{3.93}
\E \PP_{\pi_n}\left(\lambda^{-1}_n(x) e_{n,1}> c_n v^{\alpha_n}\right) \sim a_n^{-1} \gamma_n^2 v^{-1},
\ee
which tends to zero as $n \rightarrow \infty$. By a first order Chebychev inequality we conclude that for all $v>0$ Condition (0) is satisfied $\P$-a.s. As before, by monotonicity and continuity, this implies that Condition (0) holds $\P$-a.s. for all $v>0$. This proves Theorem \ref{p:main} in this case.

For $p=2,3,4$ and $c\in \left(0,\frac 1 2\right)$ or $p=5$ and $c\geq \frac 1 4$, we know from Propositions \ref{prop2.2}, \ref{prop2.8}, and Section \ref{S33} that Condition (2-1) is satisfied in $\P$-probability, whereas Condition (0) and (3-1) hold $\P$-a.s. This concludes the proof of Theorem \ref{p:main}. 

\subsection{Proof of Theorem \ref{p:corr}} \label{S36}
We use Theorem \ref{p:main} to prove the claim of Theorem \ref{p:corr}. By the same arguments as in the proof of Theorem 1.5 in \cite{BG10}, we obtain that for $t>0$, $s>0$, and $\varepsilon\in(0,1)$ the correlation function $\CC_n^{\varepsilon}(t,s)$ can, with very high probability and $\P$- a.s., be approximated by
\bea\label{3.94}
\CC_n^{\varepsilon}(t,s)&=&(1-o(1))\ \PP_{\pi_n}(\cal{R}_n \cap (t^{\alpha_n}, (t+s)^{\alpha_n}) =\emptyset) \nonumber\\
&=&(1-o(1))\ \PP_{\pi_n}(\cal{R}_{\alpha_n} \cap (t, t+s) =\emptyset),
\eea
where $\cal{R}_n$ is the range of the blocked clock process $S_n^{b}$ and $\cal{R}_{\alpha_n}$ is the range of $\left(S_n^{b}\right)^{\alpha_n}$. By Theorem \ref{p:main} we know that $\left(S_n^{b}\right)^{\alpha_n}\limj M_{\nu}$, $\P$-a.s. for $p>5 $ if $c\in(0,\frac 12)$, $p=5$ if $c<\frac 14$, and in $\P$-probability else. By Proposition 4.8 in \cite{Re} we know that the range of $M_{\nu}$ is the range of a Poisson point process $\xi'$ with intensity measure $\nu'(u,\infty)=\log u-\log K_p$. Thus, writing $\cal{R}_M$ for the range of $M_{\nu}$, we get that
\be\label{3.95}
\textstyle{\PP (\cal{R}_M \cap (t,t+s)=\emptyset)=\PP(\xi'(t,t+s)=0)= e^{-\nu'(t,t+s)}=\frac{t}{t+s}.}
\ee
The claim of Theorem \ref{p:corr} follows.
\section{Appendix} \label{S4}
In the appendix we state and prove a lemma that is needed in the proof of Lemma \ref{lemma2.6}.
\begin{lemma}\label{lemma2.7}
Let $D_{ij}=\dist(J_n(i),J_n(j))$ and  $\Delta^0_{d}=(1-2dn^{-1})^p$. For any $\eta>0$ there exists a constant $\bar C<\infty$ such that, for $n$ large enough and $d\in \{0,\ldots,n\}$,
\bea
&&\textstyle{
 k_n(t) \sum_{\lf i/v_n\rf =\lf j/v_n\rf}^{\theta_n}E_{\pi_n}\1_{D_{ij}=d} |\Delta^0_d-\Delta^1_{ij}|  }\leq \bar C t a_n \frac{d^2}{v_n n} \1_{d\leq v_n}\label{3.43},\\
&&\textstyle{
k_n(t) \sum_{\lf i/v_n\rf \neq\lf j/v_n\rf}^{\theta_n}E_{\pi_n}\1_{D_{ij}=d}  }\leq \bar C t \frac{a_n \exp\left(\eta \gamma_n^2 \min\{d,n-d\}\right)}{v_n \gamma_n^2}\label{3.44}.
\eea
\end{lemma}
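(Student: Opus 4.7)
My strategy is to identify the distance process as a familiar chain, then attack the two bounds separately with a combination of elementary Markov chain estimates and combinatorial pair-counting.

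The key observation is that under $P_{\pi_n}$ (with $\pi_n$ uniform on $\Sigma_n$), the process $k\mapsto D_{i,i+k}=\dist(J_n(i),J_n(i+k))$ is distributed like the Ehrenfest urn chain $(X_k)_{k\ge 0}$ on $\{0,\ldots,n\}$ started at $0$, with transitions $p(d,d+1)=(n-d)/n$ and $p(d,d-1)=d/n$. Writing $p_k(0,d):=P(X_k=d)$, one therefore has $E_{\pi_n}\1_{D_{ij}=d}=p_{|i-j|}(0,d)$, and both estimates reduce to controlling $p_k(0,d)$ and counting the pairs $(i,j)\in[1,\theta_n]^2$ with prescribed $|i-j|$. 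The combinatorial inputs I will use are: the number of ordered same-block pairs with $|i-j|=k$ equals $2(\theta_n/v_n)(v_n-k)_+$; the number of ordered different-block pairs with $|i-j|=k$ is at most $2k\theta_n/v_n$ for $k\le v_n$ and at most $2\theta_n$ for $k>v_n$.

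For \eqref{3.43}, the same-block regime forces $k:=|i-j|\le v_n-1$, and since $X_\cdot$ moves by $\pm 1$, the event $\{D_{ij}=d\}$ forces $k\ge d$. A second-order Taylor expansion of $u\mapsto(1-2u)^p$ yields
\[
\bigl|(1-2d/n)^p-(1-2pk/n)\bigr|\le C_p\!\left(\tfrac{k-d}{n}+\tfrac{d^2}{n^2}\right).
\]
I then split the sum over same-block pairs according to the two terms and use the elementary Ehrenfest bounds $\sum_{k\ge d}p_k(0,d)=O(1)$ and $\sum_{k\ge d}(k-d)p_k(0,d)=O(d)$, both valid for $d\le v_n\ll n$ (the chain hits $d$ in time $\sim d$ and then escapes, spending only $O(1)$ steps at any level). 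Inserting these together with the pair count gives a bound of order $\theta_n d^2/(v_n n)\1_{d\le v_n}$, which after multiplication by $k_n(t)\sim a_n t/\theta_n$ produces \eqref{3.43}.

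For \eqref{3.44}, I must exploit the refined count $2k\theta_n/v_n$ valid for $k\le v_n$ (the cruder uniform count $2\theta_n$ would cost a factor $v_n$ and be too weak). The spectral decomposition of the Ehrenfest kernel, with eigenvalues $\lambda_\ell=1-2\ell/n$ and Krawtchouk-polynomial eigenvectors, yields $\sum_{k=1}^{\theta_n}p_k(0,d)\le C\theta_n\binom{n}{d}2^{-n}+Cn\log n$. Combining this with the refined pair counts, applying Stirling's bound $\binom{n}{d}2^{-n}\le Ce^{-nI(d/n)}/\sqrt n$ with $I(u)=u\log(2u)+(1-u)\log(2(1-u))$, and finally using the trivial (but sufficient) inequality $e^{-nI(d/n)}\le\exp(\eta\gamma_n^2\min(d,n-d))$ to absorb the decaying factor, I obtain after multiplying by $k_n(t)$ the announced bound $\bar C t\,a_n\exp(\eta\gamma_n^2\min(d,n-d))/(v_n\gamma_n^2)$; the exponential is slack and is simply what survives after the coarse majorization.

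\emph{Main obstacle.} The delicate point is the combinatorial refinement in \eqref{3.44}: the naive bound $\le 2\theta_n$ on the number of different-block pairs with $|i-j|=k$ loses the decisive factor $k/v_n$ for small $k$ and would produce a bound larger by a factor $v_n$. The sum must be split at $k=v_n$ and the sharper count used for small $k$; once this is done, the remaining estimates reduce to standard Ehrenfest/Krawtchouk spectral inequalities and the elementary Taylor expansion above.
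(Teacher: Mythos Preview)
Your overall framework—identifying the distance process with the Ehrenfest chain, parametrising pairs by $k=|i-j|$, and Taylor expanding $\Delta^0_d-\Delta^1_{ij}$—is exactly the paper's. The gaps are quantitative, and they occur in both halves.

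For \eqref{3.43}, the Ehrenfest moment you invoke, $\sum_{k\ge d}(k-d)p_k(0,d)=O(d)$, is correct but too weak: feeding it and $\sum_k p_k(0,d)=O(1)$ into your Taylor bound $C_p\bigl((k-d)/n+d^2/n^2\bigr)$ with the pair count $2(\theta_n/v_n)(v_n-k)_+\le 2\theta_n$ produces only $O(\theta_n d/n)$, hence $O(a_n t\,d/n)$ after multiplying by $k_n(t)$, which exceeds the stated bound $\bar C t a_n d^2/(v_n n)$ by a factor $v_n/d$. The paper sharpens the key moment to $\sum_{k\le v_n}(k-d)p_k(0,d)\le C d^2/v_n$: it first reduces (via \cite{BAGun11}) to $E_0\bigl[(T_d-d)\1_{T_d<v_n}\bigr]$, then uses the birth--death formula to get $E_0T_d\le d/(1-2d/n)$, so $E_0T_d-d=O(d^2/n)$, while a Chebychev penalty for $\{T_d\ge v_n\}$ contributes $d\,v_n^{-1}E_0T_d=O(d^2/v_n)$; this is the missing ingredient.

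For \eqref{3.44}, your spectral estimate $\sum_{k=1}^{\theta_n}p_k(0,d)\le C\theta_n\binom{n}{d}2^{-n}+Cn\log n$ is true but the $Cn\log n$ transient term is fatal: combined with either pair count it leaves at best $O(a_n t\,n\log n)$, whereas for small $d$ the target $\bar C t a_n e^{\eta\gamma_n^2 d}/(v_n\gamma_n^2)=O(a_n t\,n^{2c-\omega})$ tends to $0$ (recall $\omega>c+\tfrac12>2c$). The refined count $2k\theta_n/v_n$ cannot rescue this because bounding $k\le v_n$ collapses it back to $2\theta_n$. The paper's argument is structural rather than spectral: after disposing of $\|d\|>(\log n)^{1+\varepsilon}\gamma_n^{-2}$ trivially, it observes that a different-block pair $(i,i+m)$ with $D_{i,i+m}=d$ forces either $i$ to lie within $2d$ of a block boundary (only $O(d\,\theta_n/v_n)$ such $i$, each contributing the $O(1)$ occupation time $E_0\sum_{m\le\theta_n}\1_{Q_n(m)=d}$), or $T_d>2d$, whose probability is at most $\binom{2d}{d}(d/n)^d\ll d/v_n$. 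This yields $O(a_n t\,d/v_n)$, which one absorbs into the stated bound via $d\gamma_n^2\le C(\eta)e^{\eta d\gamma_n^2}$.
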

\begin{proof} We use ideas from Section 3 in \cite{BBC08} and Section 4 in \cite{BAGun11} and write the distance process $D_{ij}=\dist(J_n(i),J_n(j))$ as the Ehrenfest chain $Q_n = \{Q_n(k):\ k\in \N\}$, which is a birth-death process with state space $\{0,\ldots, n\}$ and transition probabilities $p_{k,k-1}=1-p_{k,k+1} = \frac k n$ for $k\in \{0,\ldots,n\}$. Denote by $P_{k}$ the law and $E_k$ the expectation of $Q_n$ starting in $k$. Let moreover $T_d=\inf\{k\in\N: \ Q_n(k)=d\}$. By the Markov property of $J_n$, we have under $P_0$, in distribution, that
\be\label{4.1}
\dist(J_n(0),J_n(k))\stackrel{d}{=} \dist(J_n(j),J_n(j+k))\stackrel{d}{=}Q_n(k) \ , \quad \forall j,k\geq 0.
\ee
Recall for the proof of \eqref{3.43} that if $\lf i/v_n\rf =\lf j/v_n\rf$, we have that $\Delta_{ij}^1 \leq \Delta_{i,j}^0$. Moreover, since for such $i,j$ necessarily $|i-j|\leq v_n$ we have that $D_{ij}\leq v_n$. 
Thus, let $d\in\{1,\ldots,v_n\}$. By Lemma 4.2 in \cite{BBC08} we deduce that there exists a constant $C<\infty$, independent of $d$, such that
\be\label{4.3}
\textstyle{ k_n(t) \sum_{\lf i/v_n\rf =\lf j/v_n\rf}^{\theta_n}E_{\pi_n}\1_{D_{ij}=d} \leq C t a_n} \ .
\ee
Moreover,
\be\label{4.4}
\textstyle{
\left(\Delta_d^0-\Delta_{ij}^1\right)=\left(1- \frac{2d}{n}\right)^p-\left(1-\frac{2p|i-j|}{n}\right) = \frac{2p}{n}\left(|i-j|-d\right) + O\left(\frac{d^2}{n^2}\right).}
\ee
Therefore the main contributions in \eqref{3.43} are of the form
\bea\label{4.5}
\textstyle{\sum_{\lf i/v_n\rf =\lf j/v_n\rf}^{\theta_n}\left(|i-j|-d\right) E_{\pi_n}\1_{D_{ij}=d} }&=&\textstyle{v_n \sum_{i=1}^{\lf \theta_n/v_n\rf}\sum_{j=i+1}^{i+v_n}\left(j-i-d\right) E_{\pi_n}\1_{D_{ij}=d}}\nonumber\\
&=&\textstyle{v_n \sum_{i=1}^{\lf \theta_n/v_n\rf}\sum_{j=1}^{v_n} E_{0}\1_{Q_n(j)=d}\left(j-d\right)}.
\eea
Setting $Z\equiv \sum_{j=1}^{v_n}\1_{Q_n(j)=d}\left(j-d\right)$, \eqref{4.5} is nothing but $\theta_n E_0 Z$. It is shown in \cite{BAGun11} (page 107-108) that there exists a constant $C<\infty$, independent of $d$, such that
\bea \label{4.6}
E_0 Z &\leq& C E_0 \left(T_d - d\right)\1_{T_d<v_n} \nonumber\\
&\leq& C \left(E_0 T_d  - d P_0\left(T_d<v_n\right)\right) \leq C \left(E_0 T_d  - d\left(1- v_n^{-1} E_0 T_d\right)\right)\ ,
\eea
where the last inequality is obtained by a first order Chebychev inequality.
To calculate $E_0 T_d$ we use the following classical formulas (see e.g. \cite{LPW09}, Chapter 2.5)
\bea \label{4.7}
E_0 T_d &=& \textstyle{\sum_{l=1}^d E_{l-1} T_l} , \quad \mbox{ where }\\
E_{l-1} T_l &=& \textstyle{ \frac 1 {p_{l,l-1}} \prod_{i=1}^l \frac{p_{i,i-1}}{ p_{i-1,i}}\left(1+ \sum_{j=1}^{l-1} \prod_{k=1}^j  \frac{p_{k,k-1}}{p_{k-1,k}}\right)}.  \label{4.8}
\eea
Plugging in the transition probabilities, we obtain for all $l\leq d$,
\bea \label{4.9}
E_{l-1} T_l &=& \textstyle{ \frac n l \left(\prod_{i=1}^l \frac i {n-i+1}+ \sum_{j=1}^{l-1} \prod_{k=j+1}^l  \frac k{n-k+1}\right)}\nonumber\\
&=& \textstyle{ \frac n l \sum_{j=0}^{l-1} \prod_{k=j+1}^l \frac k {n-k+1}}.
\eea
For any $l\leq d$ and $0\leq j\leq l-1$ we have that
\be\label{4.10}
\textstyle{
\frac n l \prod_{k=j+1}^l \frac k {n-k+1} \leq  \frac n d \prod_{k=j+1}^l \frac d {n-d} \ .}
\ee
In view of \eqref{4.7} we get that
\be\label{4.11}
\textstyle{
E_0 T_d \leq \sum_{l=1}^{d}  \frac 1 {1-2dn^{-1}} \left(1-  \left(\frac{d}{n-d}\right)^l \right) \leq \frac d {(1-2dn^{-1})}  }.
\ee
But then, since $\frac d n \downarrow 0$ as $n \rightarrow \infty$ and $d\leq v_n$, there exists a constant $C'<\infty$, independent of $d$, such that
\be\label{4.12}
\textstyle{
E_0 Z \leq C' \frac{d^2}{v_n}.}
\ee
Together with \eqref{4.3} and \eqref{4.4} this concludes the proof of \eqref{3.43}.

For the proof of \eqref{3.44} we distinguish several cases. 
If $\|d\|\equiv \min\{d, n-d\} > (\log n)^{1+\varepsilon} \gamma_n^{-2}$ for some fixed $\varepsilon>0$ then the claim of \eqref{3.44} is deduced from the bound
\be\label{4.13}
\textstyle{k_n(t) \sum_{\lf i/v_n\rf \neq\lf j/v_n\rf}^{\theta_n}E_{\pi_n}\1_{D_{ij}=d}  \leq a_n t \theta_n \ll a_n t \frac{e^{\eta \|d\|  \gamma_n^2}}{v_n \gamma_n^2}} .
\ee
Assume next that $\|d\| \leq  (\log n)^{1+\varepsilon} \gamma_n^{-2}$. It is shown in \cite{BAGun11}, (page 111-112), that in this case one can neglect values of $d$ such that $d\geq \frac n 2$. Thus, let $d \leq  (\log n)^{1+\varepsilon} \gamma_n^{-2}$. Note that
\be\label{4.14}
\textstyle{k_n(t) \sum_{\lf i/v_n\rf \neq\lf j/v_n\rf}^{\theta_n}E_{\pi_n}\1_{D_{ij}=d}  \leq k_n(t) \sum_{k=0}^{\theta_n}\sum_{m=j_k}^{\theta_n} E_{\pi_n}\1_{D_{k,k+m}=d} \ ,}
\ee
where $j_k=\inf\{i \in \N: \ \lf k/v_n\rf \neq\lf (k+i)/v_n\rf \}$. 

We further distinguish the cases $j_k\leq 2d$ and $j_k>2d$. If $j_k \leq 2d$ then, setting $Z_{j_k}(d)\equiv \sum_{m=j_k}^{\theta_n}\1_{D_{k,k+m}=d}$, we have $Z_{j_k}(d)\leq Z_0(d)$. It is shown on page 685 in \cite{BBC08} that there exists $C<\infty$, independent of $d$, such that $E_0 Z_0(d)\leq C$. Since moreover $|\{k\in \{1,\ldots,\theta_n\} : \ j_k \leq 2d\}|\leq 2 \frac{d \theta_n}{v_n}$, we know that for all $\eta>0$ there exists $C'<\infty$ such that
\be \label{4.15}
\textstyle{k_n(t) \sum_{k=0}^{\theta_n}\sum_{m=j_k}^{\theta_n} E_{\pi_n}\1_{D_{ij}=d} \leq Ct \frac{a_n d}{ v_n} \leq C't \frac{a_n e^{\eta \gamma_n^2 \|d\|}}{ v_n \gamma_n^{2}} .}
\ee
Let $j_k > 2d$, i.e. in particular $ Z_{j_k}(d)\leq Z_{2d}(d)$. By the Markov property and by Lemma 4.2 in \cite{BBC08} we obtain that there exists $C<\infty$ such that
\be \label{4.16}
\textstyle{E_0 Z_{2d}(d) \leq P_0 (T_d \in (2d,\theta_n))\left(1+ E_d\left(\sum_{k=1}^{\theta_n} \1_{Q_n(k)=d}\right)\right) \leq C P_0(T_d \in (2d,\theta_n)).}
\ee
The probability that $Q$ gets from $0$ to $d$ after $2d$ steps is bounded by the probability that it takes at least $d$ steps to the left, i.e. 
\be\label{4.17}
\textstyle{
P_0(T_d \in (2d,\theta_n))\leq  {2d\choose d}\left(\frac d n\right)^{d}\leq 2d \left(\frac{4d} n\right)^d \ll \frac d{v_n }.}
\ee
The claim follows as in \eqref{4.15}. This finishes the proof of \eqref{3.44}.
\end{proof}
\bibliographystyle         {abbrv}      

\def\soft#1{\leavevmode\setbox0=\hbox{h}\dimen7=\ht0\advance \dimen7
  by-1ex\relax\if t#1\relax\rlap{\raise.6\dimen7
  \hbox{\kern.3ex\char'47}}#1\relax\else\if T#1\relax
  \rlap{\raise.5\dimen7\hbox{\kern1.3ex\char'47}}#1\relax \else\if
  d#1\relax\rlap{\raise.5\dimen7\hbox{\kern.9ex \char'47}}#1\relax\else\if
  D#1\relax\rlap{\raise.5\dimen7 \hbox{\kern1.4ex\char'47}}#1\relax\else\if
  l#1\relax \rlap{\raise.5\dimen7\hbox{\kern.4ex\char'47}}#1\relax \else\if
  L#1\relax\rlap{\raise.5\dimen7\hbox{\kern.7ex
  \char'47}}#1\relax\else\message{accent \string\soft \space #1 not
  defined!}#1\relax\fi\fi\fi\fi\fi\fi} \def\cprime{$'$} \def\cprime{$'$}
  \def\cprime{$'$}
\begin{thebibliography}{10}

\bibitem{BBC08}
G.~Ben~Arous, A.~Bovier, and J.~{\v{C}}ern{\'y}.
\newblock Universality of the {REM} for dynamics of mean-field spin glasses.
\newblock {\em Comm. Math. Phys.}, 282(3):663--695, 2008.

\bibitem{BAGun11}
G.~Ben~Arous and O.~G\"un.
\newblock Universality and extremal aging for dynamics of spin glasses on
  subexponential time scales.
\newblock {\em Commun. Pure Appl. Math.}, 65:77--127, 2012.

\bibitem{Bill68}
P.~Billingsley.
\newblock {\em Convergence of probability measures}.
\newblock John Wiley \& Sons Inc., New York, 1968.

\bibitem{Bou92}
J.-P. Bouchaud.
\newblock Weak ergodicity breaking and aging in disordered systems.
\newblock {\em J. Phys. I (France)}, 2:1705--1713, september 1992.

\bibitem{BD95}
J.-P. Bouchaud and D.~S. Dean.
\newblock Aging on {P}arisi's tree.
\newblock {\em J. Phys I(France)}, 5:265, 1995.

\bibitem{BG10}
A.~Bovier and V.~Gayrard.
\newblock Convergence of clock processes in random environments and aging in
  the $p$-spin {SK} model.
\newblock {\em Ann. Probab.}, to appear, 2012.

\bibitem{DR78}
R.~Durrett and S.~I. Resnick.
\newblock Functional limit theorems for dependent variables.
\newblock {\em Ann. Probab.}, 6(5):829--846, 1978.

\bibitem{G10b}
V.~Gayrard.
\newblock Aging in reversible dynamics of disordered systems. {II}. {E}mergence
  of the arcsine law in the random hopping time dynamics of the {REM}.
\newblock preprint, LAPT, Univ. Marseille, 2010.

\bibitem{G10a}
V.~Gayrard.
\newblock Convergence of clock process in random environments and aging in
  {B}ouchaud's asymmetric trap model on the complete graph.
\newblock {\em Electron. J. Probab.}, 17(58):1--33, 2012.

\bibitem{Kas86}
Y.~Kasahara.
\newblock Extremal process as a substitution for ``one-sided stable process
  with index {$0$}''.
\newblock In {\em Stochastic processes and their applications ({N}agoya,
  1985)}, volume 1203 of {\em Lecture Notes in Math.}, pages 90--100. Springer,
  Berlin, 1986.

\bibitem{LLR83}
M.~R. Leadbetter, G.~Lindgren, and H.~Rootz{\'e}n.
\newblock {\em Extremes and related properties of random sequences and
  processes}.
\newblock Springer Series in Statistics. Springer-Verlag, New York, 1983.

\bibitem{LPW09}
D.~A. Levin, Y.~Peres, and E.~L. Wilmer.
\newblock {\em Markov chains and mixing times}.
\newblock American Mathematical Society, Providence, RI, 2009.
\newblock With a chapter by James G. Propp and David B. Wilson.

\bibitem{MO76}
T.~Mori and H.~Oodaira.
\newblock A functional law of the iterated logarithm for sample sequences.
\newblock {\em Yokohama Math. J.}, 24(1-2):35--49, 1976.

\bibitem{Nev76}
J.~Neveu.
\newblock Processus ponctuels.
\newblock In {\em \'{E}cole d'\'{E}t\'e de {P}robabilit\'es de {S}aint-{F}lour,
  {VI}---1976}, pages 249--445. Lecture Notes in Math., Vol. 598.
  Springer-Verlag, Berlin, 1977.

\bibitem{Re}
S.~Resnick.
\newblock {\em Extreme values, regular variation, and point processes},
  volume~4 of {\em Applied Probability. A Series of the Applied Probability
  Trust}.
\newblock Springer-Verlag, New York, 1987.

\bibitem{Tala-new1}
M.~Talagrand.
\newblock {\em Mean field models for spin glasses. {V}olume {I}}, volume~54 of
  {\em Ergebnisse der Mathematik und ihrer Grenzgebiete. 3. Folge. A Series of
  Modern Surveys in Mathematics [Results in Mathematics and Related Areas. 3rd
  Series. A Series of Modern Surveys in Mathematics]}.
\newblock Springer-Verlag, Berlin, 2011.

\bibitem{Tala-new2}
M.~Talagrand.
\newblock {\em Mean field models for spin glasses. {V}olume {II}}, volume~55 of
  {\em Ergebnisse der Mathematik und ihrer Grenzgebiete. 3. Folge. A Series of
  Modern Surveys in Mathematics [Results in Mathematics and Related Areas. 3rd
  Series. A Series of Modern Surveys in Mathematics]}.
\newblock Springer-Verlag, Berlin, 2011.

\end{thebibliography}

\def\soft#1{\leavevmode\setbox0=\hbox{h}\dimen7=\ht0\advance \dimen7
  by-1ex\relax\if t#1\relax\rlap{\raise.6\dimen7
  \hbox{\kern.3ex\char'47}}#1\relax\else\if T#1\relax
  \rlap{\raise.5\dimen7\hbox{\kern1.3ex\char'47}}#1\relax \else\if
  d#1\relax\rlap{\raise.5\dimen7\hbox{\kern.9ex \char'47}}#1\relax\else\if
  D#1\relax\rlap{\raise.5\dimen7 \hbox{\kern1.4ex\char'47}}#1\relax\else\if
  l#1\relax \rlap{\raise.5\dimen7\hbox{\kern.4ex\char'47}}#1\relax \else\if
  L#1\relax\rlap{\raise.5\dimen7\hbox{\kern.7ex
  \char'47}}#1\relax\else\message{accent \string\soft \space #1 not
  defined!}#1\relax\fi\fi\fi\fi\fi\fi} \def\cprime{$'$} \def\cprime{$'$}
  \def\cprime{$'$}

 \end{document}